\definecolor{red}{rgb}{1,0,0}
\definecolor{blue}{rgb}{.2,.2,.8}
\def\dsf{\mathrm{dsf}}
\def\st{\mathrm{st}}
\def\sp{\mathrm{sp}}
\def\sa{\mathrm{sa}}
\def\SA{\mathcal{SA}}
\def\pa{\mathrm{pa}}
\def\spa{\mathrm{sp{\small a}}}
\def\rep{\mathrm{rep}}
\def\add{\mathrm{add}}
\def\bin{\mathrm{bin}}
\def\val{\mathrm{val}}
\def\rb{\mathrm{rb}}
\def\OB{\mathcal{OB}}
\def\B{\mathcal{B}}
\def\P{\mathcal{P}}
\def\OC{\mathcal{OC}}
\def\sf{\mathrm{sf}}
\def\SF{\mathcal{SF}}
\def\DSF{\mathcal{DSF}}
\def\ST{\mathcal{ST}}
\def\sl{\mathrm{sl}}
\def\SL{\mathcal{SL}}
\def\sb{\mathrm{sb}}
\def\SPa{\mathcal{SP}a}
\def\SP{\mathcal{SP}}
\def\SB{\mathcal{SB}}
\def\nc{\mathrm{nc}}
\def\snc{\mathrm{snc}}
\def\SNc{\mathcal{SN}c}
\def\sc{\mathrm{snc}}
\def\SC{\mathcal{SN}c}
\def\HB{\mathcal{HB}}
\def\psf{\mathrm{psf}}
\def\pdsf{\mathrm{pdsf}}
\def\psl{\mathrm{psl}}
\def\pst{\mathrm{pst}}
\def\pspa{\mathrm{pspa}}
\def\psa{\mathrm{psa}}
\def\psp{\mathrm{psp}}
\def\psnc{\mathrm{psnc}}
\def\psb{\mathrm{psb}}
\def\phb{\mathrm{phb}}
\def\OBST{\overline{\OB}^{\dag}_{\overline{\mathrm R}}}
\def\i4{\ \ \ \ }
\def\h5{\ \ \ \ \ }
\def\R{\mathrm R}
\newtheorem{theorem}{Theorem}[section]
\newtheorem{corollary}[theorem]{Corollary}
\newtheorem{proposition}{Proposition}
\newtheorem{lemma}[theorem]{Lemma}
\theoremstyle{definition}
\newtheorem{definition}{Definition}
\newtheorem{example}{Example}
\newtheorem{remark}{Remark}
\begin{document}

\allowdisplaybreaks

\title{Partitions enumerated by self-similar sequences}
\author{Cristina Ballantine}\address{Department of Mathematics and Computer Science\\ College of the Holy Cross \\ Worcester, MA 01610, USA \\} 
\email{cballant@holycross.edu} 
\author{George Beck} \address{Department of Mathematics and Statistics\\ Dalhousie University\\ Halifax, NS, B3H 4R2, Canada \\} \email{george.beck@gmail.com}

\maketitle

\begin{abstract}

 The Fibonacci numbers are the prototypical example of a recursive sequence, but grow too quickly to enumerate sets of integer partitions. The same is true for the other classical sequences $a(n)$ defined by Fibonacci-like recursions: the tribonacci, Padovan, Pell, Narayana's cows, and Lucas sequences. For each sequence $a(n)$, however, we can define a  related sequence $\sa(n)$ by   defining $\sa(n)$ to have the same recurrence and initial conditions as $a(n)$, except that $\sa(2n)=\sa(n)$. Growth is no longer a problem: for each $n$ we construct recursively a set $\SA(n)$ of partitions of $n$ such that the cardinality of $\SA(n)$ is $\sa(n)$. We study the properties of partitions in $\SA(n)$ and in each case we give non-recursive descriptions. We  find congruences for $\sa(n)$ and also for $\psa(n)$, the  total number of parts in all partitions in $\SA(n)$. 
\end{abstract}

\section{Introduction} An integer partition of a positive integer $n$ is a weakly decreasing sequence of positive integers whose sum is $n$. The positive integers in the sequence are called parts. The set of partitions of $n$
 is denoted by $\P(n)$ and the number of partitions of $n$
 is denoted by $p(n)$.
For example, the five partitions of $4$ are $(4)$, $(3,1)$, $(2,2)$, $(2,1,1)$, and $(1,1,1,1)$, so $p(n) = 5$. Since the empty partition is the only partition of $0$, we have $p(0)=1$.

As usual, $\{F_n\}_{n\geq 0}$ denotes the sequence of Fibonacci numbers. For all $n\geq 9$, $p(n)<F_n$ (see for example \cite{AE}). Thus, the numbers $F_n$ grow too fast to enumerate a subset of $\P(n)$. However, one can define a related integer sequence that enumerates the set $\OB(n)$ of odd binary partitions of $n$, which are the partitions of $n$ whose parts are powers of $2$, each part occurring an odd number of times. 

\begin{definition} \label{def-sf} The semi-Fibonacci sequence, $\{\sf(n)\}_{n\geq 0}$ is defined recursively by $\sf(0)=1$, $\sf(1)=1$, and for $n\geq 1$, \begin{align*}\sf(2n)&=\sf(n)\\ \sf(2n+1)&=\sf(2n)+\sf(2n-1).\end{align*} 
\end{definition}

The following table shows the values of the sequence $\sf(n)$ for $n = 0, 1, \ldots, 12$.

$$\begin {array}
{c | ccccccccccccc}
n & 0 & 1 & 2 & 3 & 4 & 5 & 6 & 7 & 8 & 9 & 10 & 11 & 12 \\
\hline
\sf(n) & 1 & 1 & 1 & 2 & 1 & 3 & 2 & 5 & 1 & 6 & 3 & 9 & 2
\end {array} $$

Andrews proved in \cite{A19} that $|\OB(n)|=\sf(n)$. In \cite{Beck}, the second author defined the set $\SF(n)$ of semi-Fibonacci partitions of $n$ by 
$\SF(1):=\{(1)\}$, $\SF(2):=\{(2)\}$ 
and, for $n\geq 1$, 
the partitions in $\SF(2n)$ are the partitions of $\SF(n)$ with each part doubled, and 
the partitions in $\SF(2n+1)$ are the partitions of $\SF(2n)$ each with an additional part equal to $1$, or the partitions in $\SF(2n-1)$ each with the unique odd part increased by $2$.
Then, $|\SF(n)|=\sf(n)$.

A non-recursive description of $\SF(n)$ was given in \cite[Theorem 4]{AMN}: for a positive integer $m$, the $2$-adic valuation of $m$, denoted $\val_2(m)$, is the non-negative integer $k$ such that $2^k \mid m$ but $2^{k+1} \nmid m$. Then, for $n\geq 1$, $\SF(n)$ is the set of partitions of $n$ with distinct $2$-adic valuations of parts.

Given an integer sequence $a(n)$ defined by a set of initial conditions and a linear recurrence, there is an associated integer sequence $\sa(n)$ defined with the same initial conditions and the same recurrence for odd $n$, but with $\sa(2n) = \sa(n)$. In the literature, such a sequence is called self-similar \cite{selfsimilar} because the subsequence of even terms is the same as the entire sequence.  

Motivated by semi-Fibonacci partitions, for each such sequence $\sa(n)$, we can define recursively a set of partitions $\SA(n)$ of $n$ enumerated by $\sa(n)$. 
These definitions can be varied; we have chosen definitions that allow us to also give   non-recursive descriptions of the partitions in $\SA(n)$.

We discuss parity properties for most of the self-similar sequences we introduce. In many cases we investigate properties and congruences for the total number of parts in all partitions in $\SA(n)$ for the respective sets of partitions.

The paper is organized as follows. In section \ref{prelim}, we discuss necessary background on partitions and introduce notation used throughout the paper. In section \ref{sec_gen} we determine the generating function for a self-similar sequence defined by a recurrence of order at most three at  odd arguments. 
In the subsequent sections, we study different self-similar sequences and related partitions as follows. 
\begin{center}
\begin{tabular}{c|l} section & sequence\\ 
\hline \ref{sec_st} & semi-tribonacci\\ \ref{sec_spa} & semi-Padovan\\ \ref{sec_sp} & semi-Pell\\ \ref{sec_snc} & semi-Narayana's cows \\ \ref{sec_dsf} & delayed semi-Fibonacci\\ \ref{sec_sl} & semi-Lucas \\ \ref{sec_sb} &  Stern-Brocot
\end{tabular}
\end{center}
For the convenience of the reader, we summarize notation and definitions in section \ref{sec_notation}. We offer some concluding remarks and open problems in section \ref{sec_conclusions}.

\section{Preliminaries and notation}
\label{prelim}

If $\lambda$ is a partition of $n$, we write $\lambda\vdash n$. The weight (or size) of a partition $\lambda$, denoted by $|\lambda|$, is the sum of its parts. Thus, if $\lambda \vdash n$, then $|\lambda|=n$. The length of a partition $\lambda$, denoted by $\ell(\lambda)$, is the number of parts of $\lambda$. We write a partition as a list, a formal sum, or a concatenation when unambiguous. For example, the partition $\lambda=(4,4,3,2,2,1)$ of $16$ can also be written as $4+4+3+2+2+1$ or $443221$. We denote the empty partition by $(\ )$. We also view a partition as the multiset of its parts and write $a\in \lambda$ to mean that $a$ is a part of $\lambda$. The number of times an integer $i$ occurs as a part in the partition $\lambda$ is denoted by $m_\lambda(i)$ and is referred to as the multiplicity of $i$ in $\lambda$. For example, if $\lambda=(4,4,3,2,2,1)$, then $m_\lambda(4)=2$. 
For more on the theory of partitions, we refer the reader to \cite{Andrews98}. 

An overpartition of $n$ is a partition of $n$ in which the first occurrence of a part may be overlined \cite{Corteel}. For example, $(4,4,3,2,2,1), (\overline 4,4,3,2,2,1)$, and $(\overline 4,4,\overline 3 ,2,2,1)$ are some of the overpartitions of $16$.

We define several operations on partitions. 

 If $\lambda$ and $\mu$ are partitions, then $\lambda\sqcup \mu$ denotes the partition whose multiset of parts is the union of the multisets of parts of $\lambda$ and $\mu$. For example, if $\lambda=(2,1,1)$ and $\mu=(3,2,1,1)$, we have $\lambda\sqcup\mu=(3,2,2,1,1,1,1)$. Similarly, if $\mathcal S$ is a set of partitions, define ${\mathcal S \sqcup \mu =\{\lambda \sqcup \mu \mid \lambda \in \mathcal S\}}$. If $\mathcal S=\emptyset$, then $\mathcal S\sqcup \lambda=\emptyset$.

 If each part of $\mu$ occurs in $\lambda$ with equal or greater multiplicity, we say that $\mu$ is contained in $\lambda$, or a subpartition of $\lambda$, and write $\mu\subseteq \lambda$. If $\mu\subseteq \lambda$, we denote by $\lambda \setminus \mu$ the partition obtained from $\lambda$ by removing the multiset of parts of $\mu$. For example, if $\lambda=(7,4,4,4,2,1,1)$ and $\mu=(4,2,1,1)$, then $\lambda\setminus \mu=(7,4,4)$.

Let $m$ be a positive integer. For a partition $\lambda=(\lambda_1, \lambda_2, \ldots, \lambda_\ell)$, we define $m\lambda:=(m\lambda_1, m \lambda_2, \ldots, m\lambda_\ell)$. Moreover, if every part of $\lambda$ is divisible by $m$, we define $\displaystyle \lambda/m:=(\lambda_1/m, \lambda_2/m, \ldots, \lambda_\ell/m)$. Occasionally, we also write $\frac{1}{m}\lambda$ for $\lambda/m$.

For a partition $\lambda=(\lambda_1, \lambda_2, \ldots, \lambda_\ell)$, we define $\add_{1,1}(\lambda):=(\lambda_1+1, \lambda_2, \ldots, \lambda_\ell)$, the partition obtained by adding $1$ to the first part of $\lambda$. Vacuously, $\add_{1,1}((\ ))=(\ )$. If  $\lambda$ has an odd part, we denote by $\lambda^{o+2}$ the partition obtained from $\lambda$ by adding $2$ to the largest odd part. 

Again, we extend this notation to sets.  Given a set of partitions $\mathcal S$,  define ${m \mathcal S =\{m \lambda\mid\lambda \in \mathcal S\}}$, ${\mathcal S^{o+2}=\{ \lambda^{o+2}\mid\lambda \in \mathcal S\}}$ (assuming each partition in $\mathcal S$ has an odd part), and $\add_{1,1}(\mathcal S)=\{\add_{1,1}(\lambda) \mid \lambda\in \mathcal S\}$.

A binary partition has only powers of $2$ for parts. We denote the set of binary partitions of $n$ by $\B(n)$. 

As mentioned in the previous section, $\OB(n)$ denotes the subset of $\B(n)$ consisting of partition in which all parts have odd multiplicities. 
We denote by $\overline\OB(n)$ the set of  odd binary overpartitions of $n$; removing the overlines in $\lambda \in \overline\OB(n)$ gives  an odd binary partition. For example, the binary overpartitions of $3$ are $2+1, \, \overline{2}+1,\,  2+\overline 1,\, \overline{2}+\overline{1},\,  1+1+1,\,  \overline 1+1+1. $

Throughout the paper, if $\mathfrak  f$ is a function on a set of partitions, we omit double parentheses and write $\mathfrak  f(\lambda_1, \lambda_2, \ldots, \lambda_\ell)$ instead of $\mathfrak  f((\lambda_1, \lambda_2, \ldots, \lambda_\ell))$.

 We define the weight-preserving function $\rep_2: \P(n) \to \B(n)$ as follows. If $n\geq 1$ and $\val_2(n)=k$, define $\rep_2(n)$ to be the partition with $n/2^k$ parts all equal to $2^k$. Then define $$\rep_2(\lambda_1, \lambda_2, \ldots, \lambda_\ell):=\bigsqcup_{i=1}^\ell \rep_2(\lambda_i).$$
 For example, $\rep_2(12,6,1,1,1)=(4,4,4,2,2,2,1,1,1)$. Note that $\rep_2$ is not injective. For example, $\rep_2(12,6,2,1,1,1)=(4,4,4,2,2,2,1,1,1)=\rep_2(12,6,3,2).$
 
As mentioned above, in \cite[Theorem 4]{AMN} it is shown that, for $n\geq 1$, $\SF(n)$ is the set of partitions of $n$ with distinct $2$-adic valuations of parts. Then, $\rep_2$ gives a bijection between $\SF(n)$ and $\OB(n)$. Here, the inverse of $\rep_2$ takes a partition in $\OB(n)$ and merges all equal parts into a single part. For example, $\rep_2^{-1}(4,4,4,2,2,2,1,1,1)=(12,6,3)$.

 \begin{example} In the table below, the odd binary partitions in $\OB(n)$ are, in order, listed as $\rep_2(\lambda)$ for the partitions $\lambda \in \SF(n)$. 
$$\begin{array}{c|c|c} n& \SF(n) & \OB(n)\\ \hline 1 & 1 & 1 \\ 2 & 2& 2 \\ 3 & 2+1, 3 & 2+1, 1+1+1 \\ 4& 4 & 4\\ 5 & 4+1, 3+2, 5 & 4+1, 2+1+1+1, 1+1+1+1+1 \end{array} $$
\end{example}
 
 We define the weight-preserving function $\bin: \P(n)\to \B(n)$ as follows. First, for $n\geq 1$, $\bin(n)$ is the unique partition of $n$ into distinct powers of $2$. Then define $$\bin(\lambda_1, \lambda_2, \ldots, \lambda_\ell):=\bigsqcup_{i=1}^\ell \bin(\lambda_i).$$ For example, $\bin(7, 2) = (4, 2, 2, 1)$. Note that $\bin$ is not injective on the set of partitions of $n$. For example, $\bin(6,3)=(4, 2, 2, 1)=\bin(7,2)$.

The next theorem emphasizes the difference between $\rep_2$ and $\bin$. 

\begin{theorem} The partition $\lambda$ is binary if and only if $\bin(\lambda)= \rep_ 2(\lambda)$.
\end{theorem}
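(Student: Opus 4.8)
The plan is to prove both implications directly from the definitions of $\rep_2$ and $\bin$, working part by part. Since both functions are defined by $\rep_2(\lambda) = \bigsqcup_i \rep_2(\lambda_i)$ and $\bin(\lambda) = \bigsqcup_i \bin(\lambda_i)$, and since $\sqcup$ is a disjoint union of multisets, it suffices to understand when the two agree on a single part, and then to argue that agreement on each part is equivalent to agreement on the whole partition.

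First I would handle the easy direction. Suppose $\lambda$ is binary, so every part $\lambda_i$ is a power of $2$, say $\lambda_i = 2^{k_i}$. Then $\val_2(\lambda_i) = k_i$ and $\lambda_i/2^{k_i} = 1$, so $\rep_2(\lambda_i)$ is the single part $2^{k_i} = \lambda_i$; likewise $\bin(\lambda_i)$, being the unique partition of $2^{k_i}$ into distinct powers of $2$, is the single part $2^{k_i}$. Hence $\rep_2(\lambda_i) = \bin(\lambda_i)$ for every $i$, and taking the $\sqcup$ over $i$ gives $\rep_2(\lambda) = \bin(\lambda)$.

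For the converse I would prove the contrapositive: if $\lambda$ is not binary, then $\rep_2(\lambda) \neq \bin(\lambda)$. Pick a part $\lambda_i$ that is not a power of $2$. The key observation is that $\rep_2(\lambda_i)$ consists of $\lambda_i/2^{k}$ copies of $2^{k}$ where $k = \val_2(\lambda_i)$, and since $\lambda_i$ is not a power of $2$ we have $\lambda_i/2^k \geq 3$, so $\rep_2(\lambda_i)$ has a repeated part, namely $2^k$ occurring at least three times. On the other hand $\bin(\mu)$ always consists of \emph{distinct} powers of $2$ for any single part $\mu$; the multiplicity of any power of $2$ in $\bin(\lambda) = \bigsqcup_j \bin(\lambda_j)$ is therefore at most $\ell(\lambda)$, but more usefully one compares the largest power of $2$ involved: $2^k$ appears with multiplicity $\geq 3$ in $\rep_2(\lambda)$ contributed by a single part, whereas any power $2^m$ with $m > k$ would only increase the total, and careful bookkeeping shows the total weight contributed at level $2^k$ cannot match. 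The cleanest argument is to compare the largest parts directly, or to observe that $\rep_2(\lambda)$ and $\bin(\lambda)$ have the same weight $n$ but $\bin(\lambda)$ is (as a multiset) obtained from $\rep_2(\lambda)$ by repeatedly replacing two equal parts $2^j, 2^j$ by a single part $2^{j+1}$ whenever a part occurs with multiplicity $\geq 2$; this carrying process strictly decreases the number of parts, so $\ell(\bin(\lambda)) < \ell(\rep_2(\lambda))$ whenever some part of $\rep_2(\lambda)$ is repeated, and we have just shown a part of $\rep_2(\lambda_i)$ is repeated. Hence $\bin(\lambda) \neq \rep_2(\lambda)$.

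The main obstacle is making the carrying/reduction argument fully rigorous: one must check that for any partition $\lambda$, $\bin(\lambda)$ is exactly the result of the ``binary carry'' normalization applied to $\rep_2(\lambda)$ (equivalently, both record the same integer $n$ but $\bin(\lambda)$ is its standard binary expansion summed appropriately, while $\rep_2(\lambda)$ need not be). Once that lemma is in place, the length comparison $\ell(\bin(\lambda)) \le \ell(\rep_2(\lambda))$ with equality iff no carrying occurs iff $\rep_2(\lambda)$ already has all parts distinct powers of $2$ iff $\lambda$ is binary finishes both directions at once. Alternatively, one can avoid the general carrying lemma and simply exhibit, in the non-binary case, a concrete numerical discrepancy between the two multisets — e.g.\ comparing the number of parts, which is immediate once we know $\rep_2(\lambda_i)$ has a part of multiplicity at least $3$ and $\bin$ cannot create multiplicity from that block without a net decrease in part count.
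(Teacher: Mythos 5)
Your first direction is fine and matches the paper. The problem is in the converse, where the route you lean on rests on a false lemma. You assert that $\bin(\lambda)$ ``is exactly the result of the binary carry normalization applied to $\rep_2(\lambda)$,'' i.e.\ that it records the standard binary expansion of $n$. It does not: $\bin$ is applied \emph{part by part} and the multiset union of the individual binary expansions need not be carry-free. The paper's own example $\bin(7,2)=(4,2,2,1)$ has a repeated part, so it is not the binary expansion of $9$; and $\bin(2,2)=(2,2)$, whereas full carrying would give $(4)$. The same counterexample $\lambda=(2,2)$ breaks your chain ``equality iff no carrying occurs iff $\rep_2(\lambda)$ already has all parts distinct powers of $2$ iff $\lambda$ is binary'': here $\lambda$ is binary but $\rep_2(\lambda)=(2,2)$ has a repeated part. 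So the lemma you flag as ``the main obstacle'' is not merely unproved --- it is false, and any argument routed through it collapses.

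Fortunately the alternative you sketch in your last sentence is correct and easy to make rigorous, and it is worth stating cleanly: for a single part $u=2^k m$ with $m$ odd, $\ell(\rep_2(u))=m$ while $\ell(\bin(u))$ is the number of ones in the binary expansion of $m$, which is at most $m$ with equality exactly when $m=1$, i.e.\ when $u$ is a power of $2$. Since lengths add under $\sqcup$, this gives $\ell(\bin(\lambda))\le\ell(\rep_2(\lambda))$ with equality if and only if $\lambda$ is binary, which proves the theorem in one stroke (no carrying lemma needed). This is a genuinely different, and arguably slicker, argument than the paper's: the paper instead strips off the parts that are powers of $2$, divides the remainder $\gamma$ by the largest power of $2$ dividing all its parts, and compares the number of parts equal to $1$ in $\rep_2(\gamma/m)$ (the sum of the odd parts) with that in $\bin(\gamma/m)$ (the number of odd parts). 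Both invariants --- total length versus number of $1$s after normalization --- do the job; yours avoids the reduction to $\gamma/m$ entirely, but you must actually write down the per-part length inequality rather than appeal to carrying.
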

\begin{proof}

If $n$ is a power of 2, $\bin(n)=(n)=\rep_2(n)$.

So if $\lambda$ is a binary partition, \begin{equation}\label{b-r}\bin(\lambda)=\lambda=\rep_2(\lambda).\end{equation}

If $\lambda$ is not a binary partition, let $\beta$ be the subpartition of $\lambda$ consisting of the parts that are powers of 2 and let $\gamma=\lambda\setminus \beta$. By \eqref{b-r}, to prove $\bin(\lambda)\neq \rep_2(\lambda)$, it is sufficient to prove that $ \bin(\gamma)\neq \rep_ 2(\gamma)$.

Let $m$ be the largest power of 2 that divides all the parts of $\gamma$. Since the parts of $\gamma$ are not powers of $2$, it follows that $m$ is not a part of $\gamma$. The partition $\gamma/m$ has at least one odd part and all of its odd parts of are greater than 1. The number of 1s in $\rep_2(\gamma/m)$ is the sum $S$ of the odd parts of $\gamma/m$; the number of 1s in $\bin(\gamma/m)$ is the number $N$ of odd parts of $\gamma/m$. But $S>N$, so $\rep_2(\gamma/m)\neq \bin(\gamma/m)$. Then, since $m$ is a power of $2$, we have $\rep_2(\gamma)=m\, \rep_2(\gamma/m)\neq m\, \bin(\gamma/m)=\bin(\gamma)$, which completes the proof.
\end{proof}

We aim to keep the notation intuitive throughout the paper. A self-similar sequence is denoted by one or two lowercase letters prefixed by the letter “s" (for “semi"); for example, $\sp(n)$ denotes the semi-Pell sequence and $\spa(n)$ denotes the semi-Padovan sequence. The corresponding set of partitions is denoted by the same letters in capital calligraphy style; for example, the set of partitions enumerated by $\sp(n)$ is denoted by $\SP(n)$, which are the semi-Pell partitions. (Sometimes we need to add a lowercase letter, as in $\SPa$, the semi-Padovan partitions.) For the total number of parts in all partitions in $\SP(n)$, for example, we use $\psp(n)$; that is, we the same lower case letters preceded by $\mathrm{p}$.

In general, to avoid repetition, we omit the proofs of propositions; they can be proved from some combination of using the recurrences, induction, or previous results. The omitted proofs are similar to those we present in detail for some theorems.

 \section{The generating function of a self-similar sequence} \label{sec_gen}

  In this section, we begin by considering a general self-similar sequence with a recurrence of order at most $3$ at positive odd arguments and use analytic methods similar to those in \cite{A19} and \cite{KM} to find its generating function. In subsequent sections, specializing that recurrence gives combinatorial interpretations for self-similar sequences based on well-known Fibonacci-like sequences.

Consider the sequence $\{f(n)\}_{n\geq 0}$ defined recursively by: $f(0)=a_0, f(1)=a_1$, and, for $n\geq 1$, by
\begin{align*}f(2n)& = f(n), \\ f(2n+1) & =c_1f(2n)+c_2f(2n-1)+c_3f(2n-2).
\end{align*}

Let $F(x)$ be the generating function for $\{f(n)\}_{n\geq 0}$. Thus, $$F(x)=\sum_{n\geq 0}f(n)x^n.$$ 
We have $$\sum_{n\geq0}f(2n)x^{2n}= a_0+\sum_{n\geq1}f(2n)x^{2n} =a_0+ \sum_{n\geq 1}f(n)x^{2n} = \sum_{n\geq0}f(n)x^{2n}=F(x^2),$$ and 
\begin{alignat*}{3}\sum_{n\geq 0}f(2n+1)x^{2n+1} & = a_1x&& + \sum_{n\geq 1}(c_1f(2n)+c_2f(2n-1)+c_3f(2n-2))x^{2n+1}\\ 
& = a_1x && + c_1x\sum_{n\geq1}f(2n)x^{2n} +c_2x^2\sum_{n\geq 1}f(2n-1)x^{2n-1} \\ & && +c_3x^3\sum_{n\geq 1}f(2(n-1))x^{2(n-1)} \\
& = a_1x&& + c_1x\sum_{n\geq1}f(2n)x^{2n} +c_2x^2\sum_{n\geq 0}f(2n+1)x^{2n+1} \\ & && +c_3x^3\sum_{n\geq 0}f(2n)x^{2n} .
\end{alignat*}

Thus, $$\sum_{n\geq 0}f(2n+1)x^{2n+1} = \frac{1}{1-c_2x^2}\left(a_1x+c_1x(F(x^2)-a_0)+c_3x^3F(x^2)\right).$$
Then
\begin{align*}F(x) & = \sum_{n\geq 0}f(2n)x^{2n}+\sum_{n\geq 0}f(2n+1)x^{2n+1}\\ 
& =F(x^2)+ \frac{1}{1-c_2x^2}\left(a_1x+c_1x(F(x^2)-a_0)+c_3x^3F(x^2)\right)\\& = \frac{(a_1-a_0c_1)x}{1-c_2x^2}+\left(1+\frac{c_1x+c_3x^3}{1-c_2x^2}\right)F(x^2).
\end{align*}
Let $$A(x):=\frac{(a_1-a_0c_1)x}{1-c_2x^2} \text{\ and \ }B(x):=1+\frac{c_1x+c_3x^3}{1-c_2x^2}.$$ Then \begin{equation} F(x)=A(x)+B(x)F(x^2).\label{gf} \end{equation}
Iterating \eqref{gf}, leads to \begin{equation} F(x)=\sum_{i=0}^k A(x^{2^i})\prod_{r=0}^{i-1}B(x^{2^r})+ F(x^{2^{k+1}})\prod_{r=0}^{k}B(x^{2^r}), \label{gf_it} \end{equation} 
with the convention that an empty product such as $\prod_{r=0}^{-1}B(x^{2^r})$ is taken to be $1$.

In \eqref{gf_it}, we let $k\to \infty$. Using $\displaystyle \lim_{k\to \infty}F(x^{2^{k+1}})=a_0$ we get \begin{equation} \label{gf_exp} F(x)=\sum_{i=0}^\infty A(x^{2^i})\prod_{r=0}^{i-1}B(x^{2^r})+ a_0\prod_{r=0}^\infty B(x^{2^r}).\end{equation}
It is easily checked that \eqref{gf_exp} satisfies equation \eqref{gf} and the initial coefficients agree with the initial terms of the sequence.

\noindent\underline{Case 1:} If $a_1-a_0c_1\neq0$, then $A(x^{2^i})\neq 0$ for all $i\geq 0$ and we have 

\begin{align}\label{big} F(x) = \sum_{i=0}^\infty \frac{(a_1-a_0c_1)x^{2^i}}{1-c_2x^{2^{i+1}}}&  \prod_{r=0}^{i-1} \left(1+\frac{c_1x^{2^r}+c_3x^{3\cdot 2^r}}{1-c_2x^{2^{r+1}}}\right)\\ & \notag +a_0\prod_{r=0}^{\infty} \left(1+\frac{c_1x^{2^r}+c_3x^{3\cdot 2^r}}{1-c_2x^{2^{r+1}}}\right).
\end{align}

\noindent\underline{Case 2:} If $a_1-a_0c_1=0$, then $A(x^{2^i})= 0$ for all $i\geq 0$ and we have 

\begin{align}\label{small}F(x) & = a_0\prod_{r=0}^{\infty} \left(1+\frac{c_1x^{2^r}+c_3x^{3\cdot 2^r}}{1-c_2x^{2^{r+1}}}\right).
\end{align}

 In particular, if $f(n)$ is the semi-Fibonacci sequence, we have $a_0=a_1=1$, $c_1=c_2=1$, and $c_3=0$. Thus, we are in Case 2. and \eqref{small} becomes 

$$F(x) = \prod_{r=0}^{\infty} \left(1+\frac{x^{2^r}}{1-x^{2^{r+1}}}\right)=\sum_{n=0}^\infty|\OB(n)|q^n,$$ the generating function for the sequence $|\OB(n)|$.

\section{Semi-tribonacci partitions}\label{sec_st}

\begin{definition} The semi-tribonacci sequence $\{\st(n)\}_{n\geq 0}$ is defined recursively by $\st(0)=0$, $\st(1)=1$, and for $n\geq 1$, \begin{align*}\st(2n)&=\st(n),\\ \st(2n+1)&=\st(2n)+\st(2n-1)+\st(2n-2).\end{align*} 
\end{definition}
The following table shows the values of the semi-tribonacci sequence $\st(n)$ for $n = 0, 1, \ldots, 12$.
$$\begin{array}
{c|ccccccccccccc}
n & 0 & 1 & 2 & 3 & 4 & 5 & 6 & 7 & 8 & 9 & 10 & 11 & 12 \\
\hline
\st(n) & 0 & 1 & 1 & 2 & 1 & 4 & 2 & 7 & 1 & 10 & 4 & 15 & 2 
\end{array} $$

This is the sequence $\{f(n)\}_{n\geq 0}$ of Section \ref{sec_gen} with $a_0=0, a_1=1$, and $c_1=c_2=c_3=1$. Thus, we are in Case 1 of Section \ref{sec_gen} and \eqref{big} becomes 

\begin{equation}\label{gfst} F(x) =\sum_{i=0}^\infty \frac{x^{2^i}}{1-x^{2^{i+1}}} \prod_{r=0}^{i-1} \left(1+\frac{x^{2^r}+x^{3\cdot 2^r}}{1-x^{2^{r+1}}}\right).\end{equation}
Then, $F(x)$ is the generating function for $|\OBST(n)|$,  where 
 $\OBST(n)$ is the set of odd binary overpartitions of $n$ such that only parts that appear with multiplicity at least three may be overlined and the largest part may not be overlined. For example, $\OBST(7)$ is the set\begin{align*}\{4+2+1,\ & 4+1+1+1,\, 4+\overline{1}+1+1,\, 2+2+2+1,2+1+1+1+1+1,\\ & 2+\overline{1}+1+1+1+1,\, 1+1+1+1+1+1+1 \}.\end{align*}

 To understand the combinatorial  interpretation of \eqref{gfst} above, notice that for a fixed $i\geq 0$, the  power series expansion of  $\displaystyle \frac{x^{2^i}}{1-x^{2^{i+1}}}$   keeps track of  the largest part in  a partition (counted with multiplicity). For example, $x^{5\cdot 2^i}$ contributes  five parts equal to $2^i$ to a partition.  Each factor in $\displaystyle \prod_{r=0}^{i-1}\left(1+ \frac{x^{2^r}}{1-x^{2^{r+1}}}+\frac{x^{3\cdot{2^r}}}{1-x^{2^{r+1}}}\right)$ is interpreted as follows:  the series expansion of $\displaystyle \frac{x^{2^r}}{1-x^{2^{r+1}}}$ keeps track of parts equal to $2^r$,  none being overlined, and the  the series expansion of $\displaystyle \frac{x^{3\cdot{2^r}}}{1-x^{2^{r+1}}}$ keeps track of  parts equal to $2^r$ if the first part equal to $2^r$ is overlined. In the latter case there are at least three parts equal to $2^r$.

 Thus, we have the following combinatorial interpretation for $\st(n)$.
 
 \begin{theorem} \label{st-ob} For $n\geq 0$, $|\OBST(n)|=\st(n)$ for $n\geq 0$.
\end{theorem}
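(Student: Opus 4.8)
The plan is to show that the generating function $F(x)$ in \eqref{gfst} equals $\sum_{n\ge 0}|\OBST(n)|x^n$, after which Theorem \ref{st-ob} follows immediately from the derivation in Section \ref{sec_gen} that identified \eqref{gfst} as the generating function for $\st(n)$ (via Case 1 of \eqref{big} with $a_0=0$, $a_1=1$, $c_1=c_2=c_3=1$). So the entire content is the combinatorial reading of the product-sum in \eqref{gfst}, which is already sketched informally in the paragraph preceding the theorem; I would turn that sketch into a clean bijective/enumerative argument.

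First I would fix the decomposition suggested by the formula: a partition $\lambda\in\OBST(n)$ is determined by choosing its largest part $2^i$ together with its multiplicity $m_i$ (necessarily odd, and the largest part is never overlined), and then, for each $r$ with $0\le r\le i-1$, choosing the multiplicity $m_r$ of the part $2^r$, which must be an odd number $\ge 0$ (zero allowed, i.e.\ the part may be absent), together with a choice of whether the first occurrence of $2^r$ is overlined — the latter choice being available only when $m_r\ge 3$. I would then match this data against the formula factor by factor: the term $\frac{x^{2^i}}{1-x^{2^{i+1}}}=\sum_{k\ge 0}x^{(2k+1)2^i}$ records the largest part $2^i$ with odd multiplicity $2k+1$; inside the product, the factor $1+\frac{x^{2^r}}{1-x^{2^{r+1}}}+\frac{x^{3\cdot 2^r}}{1-x^{2^{r+1}}}$ expands as $1+\sum_{k\ge 0}x^{(2k+1)2^r}+\sum_{k\ge 0}x^{(2k+3)2^r}$, where the lone $1$ is "part $2^r$ absent," the first sum is "part $2^r$ present with some odd multiplicity, not overlined," and the second sum is "part $2^r$ present with odd multiplicity $\ge 3$, first occurrence overlined." Since the exponents contributed by distinct values of $r$ (and by $i$) are weights of distinct powers of $2$, the total exponent is $n=\sum_r m_r 2^r=|\lambda|$, and every $\lambda\in\OBST(n)$ arises exactly once. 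This gives $[x^n]F(x)=|\OBST(n)|$.

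The one genuine subtlety — and the step I expect to require the most care — is the boundary condition that the largest part is never overlined and the bookkeeping that ensures the largest part is genuinely the largest: in the formula the index $i$ plays the role of "the exponent of the largest part," the leading factor $\frac{x^{2^i}}{1-x^{2^{i+1}}}$ has no "$1$" term (so the part $2^i$ is forced to be present) and no "overlined" summand (so $2^i$ is never overlined), while the product runs only over $r<i$ (so no part $2^r$ with $r>i$ ever appears). I would state explicitly that this makes the correspondence a genuine bijection rather than merely an equality of coefficients, checking in particular that a partition with largest part $2^i$ and all smaller parts absent is counted once (via the $i$-th summand with empty product) and that the empty partition corresponds to... nothing, consistent with $\st(0)=0$ and $\OBST(0)=\emptyset$. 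A quick sanity check against the displayed list for $\OBST(7)$ — seven overpartitions — confirms the count against $\st(7)=7$.

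Finally, to keep the write-up short I would note that the analytic identity $[x^n]F(x)=\st(n)$ is already established in Section \ref{sec_gen}, so after the combinatorial identification $[x^n]F(x)=|\OBST(n)|$ we are done; no induction or recurrence-chasing is needed beyond what the product expansion gives. If one prefers to avoid generating functions entirely, the same partition of $\OBST(n)$ according to the largest part $2^i$ and recursion on the smaller parts yields the recurrence $\st(2n+1)=\st(2n)+\st(2n-1)+\st(2n-2)$ and $\st(2n)=\st(n)$ directly, but the generating-function route is cleanest here since the formula \eqref{gfst} is already in hand.
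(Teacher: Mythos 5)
Your proposal is correct and follows essentially the same route as the paper: the paper establishes Theorem \ref{st-ob} by specializing the general generating function of Section \ref{sec_gen} to obtain \eqref{gfst} and then reading the sum-over-$i$ and the product factors combinatorially exactly as you describe (largest part from the leading factor, smaller parts with the three-way choice of absent / present unoverlined / present overlined with multiplicity at least three). Your write-up merely makes the boundary bookkeeping (largest part forced present and never overlined, $n=0$ case) more explicit than the paper's informal paragraph.
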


\begin{remark}The odd binary partitions of $n$, which are enumerated by $\sf(n)$, are precisely the overpartitions in $\OBST(n)$ that have no overlined part. Hence, $\sf(n)\leq \st(n)$ for all $n\geq 0$. 
\end{remark}

We next define a subset of partitions of $n$ enumerated by $\st(n)$. 

\begin{definition}
The semi-tribonacci partitions are defined recursively by \\ $\ST(0)=\emptyset$, $\ST(1)=\{(1)\}$, $\ST(3)=\{(3), (2,1)\}$ and for $n\geq 1$, \begin{align*}\ST(2n)& = 2 \ST(n),\\ \ST(2n+3) & = \ST(2n+2) \sqcup (1) \bigcup \ST(2n+1)^{o+2} \bigcup \ST(2n) \sqcup (1,1,1).\end{align*}\end{definition}

 \begin{example} The table below shows the partitions in $\ST(n)$ for $n=1,\ldots,8$. 
$$\begin{array}{c|c} n& \ST(n) \\ \hline  1 & 1 \\ 2 & 2\\ 3 & 3, 2+1 \\ 4& 4 \\ 5 & 5, 3+2,4+1,2+1+1+1 \\ 6&6,4+2 \\ 7 & 7, 4+3,5+2,6+1,4+2+1,3+2+1+1,4+1+1+1 \\ 8 & 8 \end{array} $$
 
 \end{example}

\begin{remark}\label{R1} Let $\lambda\in \ST(n)$. Clearly, if $n$ is even, $\lambda$ has no odd parts. If $n$ is odd, it is easily seen by induction that $\lambda$ has one or three odd parts and in the latter case at least two of the odd parts equal $1$.
\end{remark}

The recursive definition of semi-tribonacci partitions together with Remark \ref{R1} leads to the following corollary to Theorem \ref{st-ob}.

\begin{corollary} \label{st} For $n\geq 0$, $|\ST(n)|=|\OBST(n)|.$
\end{corollary}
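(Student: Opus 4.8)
The plan is to establish Corollary \ref{st} by exhibiting a size‑preserving bijection between $\ST(n)$ and $\OBST(n)$, and then invoke Theorem \ref{st-ob} which already gives $|\OBST(n)|=\st(n)$. Since both families are defined recursively with the same "branching" structure, the natural route is an induction on $n$ in which the bijection $\Phi_n\colon\ST(n)\to\OBST(n)$ is built to be compatible with the three recursive operations. Concretely, on the even branch I would use the map $\rep_2$ (or really multiplication by $2$, which on the overpartition side shifts every part $2^r\mapsto 2^{r+1}$ and preserves overlining status): a partition $\mu\in\ST(n)$ corresponds to $2\mu\in\ST(2n)$, and I must check that $\rep_2$ intertwines this with the doubling map on $\OBST$, using that doubling cannot create or destroy the "multiplicity $\geq 3$" condition nor the "largest part unoverlined" condition.

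The odd branch is where the real work lies: $\ST(2n+3)$ is the disjoint union of $\ST(2n+2)\sqcup(1)$, $\ST(2n+1)^{o+2}$, and $\ST(2n)\sqcup(1,1,1)$. I would match these three pieces with a corresponding three‑way decomposition of $\OBST(2n+3)$ according to the behavior of the part $1$: namely, whether $\rep_2(\lambda)$ (for $\lambda$ in the $\ST$ side, or directly the overpartition) has exactly one part equal to $1$, has its odd part raised by $2$ relative to a smaller case, or has at least three $1$'s with the first one overlined. The combinatorial commentary following \eqref{gfst} in the excerpt — that the factor $\frac{x^{2^i}}{1-x^{2^{i+1}}}$ tracks the largest part with multiplicity, that $\frac{x^{2^r}}{1-x^{2^{r+1}}}$ tracks unoverlined occurrences of $2^r$, and $\frac{x^{3\cdot 2^r}}{1-x^{2^{r+1}}}$ tracks the case where the first $2^r$ is overlined — is exactly the dictionary I would use to see that these three pieces of $\OBST(2n+3)$ have sizes $|\OBST(2n+2)|$, $|\OBST(2n+1)|$, and $|\OBST(2n)|$, mirroring Remark \ref{R1}'s observation that an odd‑weight partition in $\ST$ has either one odd part or three odd parts with at least two equal to $1$.

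The key steps, in order, would be: (1) verify the base cases $n=0,1,3$ (and the auxiliary $n=2$) directly from the tables; (2) prove that $\rep_2$ restricts to a bijection $\ST(n)\to\OBST(n)$ is well defined by checking that every $\lambda\in\ST(n)$ has distinct $2$‑adic valuations among its parts, so that $\rep_2$ is injective on $\ST(n)$ (this may require a small lemma, provable by the same induction); (3) check the even‑branch compatibility $\rep_2(2\mu)=$ (double of $\rep_2(\mu)$) and that the target lies in $\OBST(2n)$; (4) partition $\OBST(2n+3)$ into the three classes above and, using the recursion $\st(2n+3)=\st(2n+2)+\st(2n+1)+\st(2n)$ together with the inductive bijections, show $\rep_2$ carries each summand of $\ST(2n+3)$ onto the corresponding class; in particular the $^{o+2}$ operation on a partition with a unique odd part $j$ corresponds on the $\OBST$ side to replacing the block of $j$ copies of $1$ (possibly with the first overlined) by $j+2$ copies, staying inside the "overline allowed iff multiplicity $\geq 3$" regime.

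The main obstacle I anticipate is step (4), specifically making the three‑way split of $\OBST(2n+3)$ airtight and verifying that the $^{o+2}$ branch is a genuine bijection onto the class of overpartitions with an overlined $1$ (equivalently, $\geq 3$ ones): one must rule out collisions between the three classes and confirm surjectivity, i.e. that every element of $\OBST(2n+3)$ falls into exactly one class — this hinges on the structural fact (Remark \ref{R1}) that the only odd parts available are $1$'s beyond a single "genuine" odd part, which itself needs the inductive hypothesis. A cleaner alternative, if the direct bijection proves fiddly, is to bypass $\OBST$ entirely and show $|\ST(n)|=\st(n)$ by strong induction directly from the recursions, using that the three recursive pieces of $\ST(2n+3)$ are pairwise disjoint (again by Remark \ref{R1}: the number of $1$'s or the value of the unique large odd part distinguishes them) so that $|\ST(2n+3)|=|\ST(2n+2)|+|\ST(2n+1)|+|\ST(2n)|=\st(2n+2)+\st(2n+1)+\st(2n)=\st(2n+3)$, together with the trivial $|\ST(2n)|=|\ST(n)|=\st(n)$; then Corollary \ref{st} follows by combining with Theorem \ref{st-ob}. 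I would present the argument in this second form, as it keeps the disjointness verification — the genuinely delicate point — front and center and avoids carrying the valuation lemma.
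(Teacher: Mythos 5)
Your final argument --- using Remark \ref{R1} to see that the three pieces of $\ST(2n+3)$ are pairwise disjoint (distinguished by the multiplicity of the part $1$), so that $|\ST(n)|$ satisfies the same recursion and initial conditions as $\st(n)$, and then invoking Theorem \ref{st-ob} --- is precisely the paper's derivation of this corollary, and it is correct. The bijection you sketch first is in the spirit of the paper's separate combinatorial proof (which instead builds the bijection non-recursively from Theorem \ref{T-st}), but since you settle on the counting argument, your approach coincides with the paper's.
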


Next, we give a non-recursive description of the semi-tribonacci partitions that will allow us give a combinatorial proof of Corollary \ref{st}.

Given a partition $\lambda=(\lambda_1, \lambda_2, \ldots, \lambda_{\ell(\lambda)})$ let $v(\lambda)=(v_1(\lambda), v_2(\lambda), \ldots, v_{\ell(\lambda)}(\lambda))$ be the sequence of non-negative integers obtained by arranging $\val_2(\lambda_i)$, $1\leq i\leq \ell(\lambda)$, in non-increasing order. Thus, $v(\lambda)$ is the partition whose parts are the $2$-adic valuations of the parts of $\lambda$.

\begin{theorem}\label{T-st} Let $n>0$. The set $\ST(n)$ consists of all partitions $\lambda$ of $n$ satisfying the following conditions. 
\begin{itemize}\item[(a)] $v_1(\lambda)$ has multiplicity one in $v(\lambda)$ (i.e. $v_1(\lambda)>v_2(\lambda)$); \item[(b)] if $1<j\leq \ell(\lambda)$, then $v_j(\lambda)$ has multiplicity one or three;
\item[(c)]if $v_j(\lambda)$ has multiplicity three, then $\lambda$ has at least two parts equal to $2^{v_j(\lambda)}$.
\end{itemize}
\end{theorem}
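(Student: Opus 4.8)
The plan is to prove Theorem~\ref{T-st} by induction on $n$, mirroring the recursive definition of $\ST(n)$, and showing at each step that the recursion preserves (and is characterized by) conditions (a)--(c). Let me write $\mathcal{C}(n)$ for the set of partitions of $n$ satisfying (a)--(c). First I would check the base cases $n=1,2,3$ directly: $\ST(1)=\{(1)\}$, $\ST(2)=2\ST(1)=\{(2)\}$, and $\ST(3)=\{(3),(2,1)\}$, and in each case one verifies by hand that $\mathcal{C}(n)$ is exactly this set (for $n=3$: the partitions are $(3),(2,1),(1,1,1)$; condition (a) rules out $(1,1,1)$ since $v=(0,0,0)$ has $v_1$ of multiplicity three, and $(3),(2,1)$ both have $v=(0)$ and $v=(1,0)$ respectively, which satisfy all conditions).

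For the inductive step I would split into the two cases of the recursion. \textbf{Even case:} $\ST(2n)=2\ST(n)$. Doubling every part of a partition $\lambda$ adds $1$ to every $2$-adic valuation, so $v(2\lambda)=v(\lambda)+(1,1,\ldots,1)$ (componentwise), hence multiplicities of valuation-classes are preserved, and ``$\lambda$ has $\geq 2$ parts equal to $2^{v_j}$'' becomes ``$2\lambda$ has $\geq 2$ parts equal to $2^{v_j+1}$.'' Thus $\lambda\in\mathcal{C}(n)\iff 2\lambda\in\mathcal{C}(2n)$; combined with the observation that every partition of an even number $2n$ lying in $\mathcal{C}(2n)$ has all parts even (condition (a)--(c) force each valuation class to have odd multiplicity summing — actually one needs: the total number of odd parts is the sum of multiplicities of the $v=0$ class, which is $0$, $1$, or $3$; if it were $1$ or $3$ the sum $2n$ would be odd, contradiction, so it is $0$), every element of $\mathcal{C}(2n)$ is $2\mu$ for some partition $\mu$ of $n$, and that $\mu\in\mathcal{C}(n)$. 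This closes the even case.

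\textbf{Odd case:} $\ST(2n+3)=\big(\ST(2n+2)\sqcup(1)\big)\cup\big(\ST(2n+1)^{o+2}\big)\cup\big(\ST(2n)\sqcup(1,1,1)\big)$. Here I would use Remark~\ref{R1} to understand the structure of a partition $\lambda\in\mathcal{C}(2n+3)$: since $2n+3$ is odd, the $v=0$ class has multiplicity $1$ or $3$ (by (a)/(b), and it cannot be $0$). If multiplicity $1$: $\lambda$ has a unique odd part $a$. If $a=1$, then $\lambda\setminus(1)$ is a partition of $2n+2$ with all parts even (its valuations are those of $\lambda$ minus the removed $0$, which had multiplicity one, so the remaining valuation classes are unchanged and all positive), and one checks it lies in $\mathcal{C}(2n+2)=\ST(2n+2)$ by induction, so $\lambda\in\ST(2n+2)\sqcup(1)$. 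If $a>1$, then $a-2$ is still odd and positive; I claim $\lambda$ with $a$ replaced by $a-2$ lies in $\mathcal{C}(2n+1)$, which requires checking $v(a)=v(a-2)=0$ (true since both are odd) so valuations are unchanged, hence $\lambda=\mu^{o+2}$ for $\mu\in\mathcal{C}(2n+1)=\ST(2n+1)$ — one must also argue $\mu$'s unique odd part is indeed the one we decreased, i.e.\ the $o+2$ operation is the inverse, which follows because $\mu$ has a unique odd part. If the $v=0$ class has multiplicity $3$: by (c), at least two of these odd parts equal $1$; write the three odd parts as $1,1,b$ with $b$ odd, $b\geq 1$. Then $\lambda\setminus(1,1,1)$ — careful, we need three $1$'s: either $b=1$, and $\lambda\setminus(1,1,1)$ is a partition of $2n$ with no odd parts (all three valuation-$0$ parts removed), or $b>1$ and then $\lambda$ has odd parts $1,1,b$ with only two $1$'s, so we cannot remove $(1,1,1)$ — but in that case $b>1$ is odd so $b-2\geq 1$ is odd, and replacing $b$ by $b-2$ gives a partition in $\mathcal{C}(2n+1)$ with three odd parts $1,1,b-2$, again of the form $\mu^{o+2}$. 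And when $b=1$, $\lambda\setminus(1,1,1)\in\mathcal{C}(2n)=\ST(2n)$ by induction. Conversely, one checks each of the three constructions applied to a valid partition lands in $\mathcal{C}(2n+3)$: adding a single $1$ to a partition of $2n+2$ all of whose parts are even creates a new valuation-$0$ part of multiplicity one (consistent with (a)/(b)); applying $o+2$ to $\mu\in\ST(2n+1)$ changes the unique-or-triple odd part, leaving all $2$-adic valuations fixed; adding $(1,1,1)$ to a partition of $2n$ with no odd parts (which by induction and the even-case analysis is the generic member of $\ST(2n)$) creates a valuation-$0$ class of multiplicity exactly three with at least two parts equal to $1$, consistent with (b)/(c). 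Finally one must check these three sets are disjoint (so that cardinalities add up correctly, matching Corollary~\ref{st}), which again follows from the parity/multiplicity bookkeeping: the number of $1$'s and the multiplicity of the valuation-$0$ class distinguish the cases.

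The main obstacle I anticipate is the careful case analysis in the odd step around the interaction between the $o+2$ operation and the ``two parts equal to $2^{v_j}$'' condition~(c) --- specifically, making sure that when the valuation-$0$ class has multiplicity three, the preimage under $o+2$ versus the preimage under ``$\sqcup(1,1,1)$'' are correctly separated according to whether the largest odd part is $1$ or exceeds $1$, and that in the ``remove $(1,1,1)$'' branch we genuinely have three copies of $1$ available. The bookkeeping that $v(\lambda)$ is unchanged by $o+2$ (because we only ever modify odd parts, which have valuation $0$, replacing them by odd parts) is the clean fact that makes everything go through, but stating it precisely and verifying that the resulting $\mu$ has its ``distinguished'' odd part in the right place requires the uniqueness/structure statement from Remark~\ref{R1}. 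Everything else is routine induction and the valuation arithmetic $v(2\lambda)=v(\lambda)+(1,\ldots,1)$.
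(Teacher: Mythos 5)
Your proposal is correct and follows essentially the same route as the paper's proof: induction on $n$ tracking the recursion in both directions, using that doubling shifts all $2$-adic valuations by one and that the $o+2$ operation leaves $v(\lambda)$ unchanged. The only difference is cosmetic --- you organize the converse case analysis by the multiplicity of the valuation-$0$ class and the size of the largest odd part, whereas the paper splits on $m_\lambda(1)$; your split in fact explicitly covers the case of a unique odd part greater than $1$, which the paper's enumeration of cases (i)--(iii) leaves implicit.
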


\begin{proof} We prove the theorem by induction. 
We first show that all partitions in $\ST(n)$ satisfy conditions (a), (b), and (c). 
By inspection, this is true for $n=1,2,3$. Let $n\geq 4$ be an integer and assume that, for all $m<n$, all partitions in $\ST(m)$ satisfy (a), (b), and (c). Let $\lambda\in \ST(n)$.

If $n=2t$ for some $t>1$, then $\lambda=2\mu$ for some $\mu \in \ST(t)$. Then, $v(\lambda)=(v_1(\mu)+1, v_2(\mu)+1, \ldots, v_{\ell(\mu)}(\mu)+1)$. Since $\mu$ satisfies (a), (b), and (c), and the parts of $\lambda$ with $2$-adic valuation equal to $v_i(\mu)+1$ are precisely twice the parts of $\mu$ with $2$-adic valuation equal to $v_i(\mu)$, it follows that $\lambda$ satisfies (a), (b), and (c). 

If $n=2t+1$ for some $t>1$, then we have three cases. 
\begin{enumerate}
 \item $\lambda=\mu\sqcup (1)$ for some $\mu\in \ST(2t)$. Then, $v_{\ell(\mu)}(\mu)>0$ and $$v(\lambda)=(v_1(\mu), v_2(\mu), \ldots, v_{\ell(\mu)}(\mu),0).$$ Since $\mu$ satisfies (a), (b), and (c), so does $\lambda$. 
 
 \item $\lambda=\mu\sqcup (1,1,1)$ for some $\mu\in \ST(2t-2)$. Then, $v_{\ell(\mu)}(\mu)>0$ and $$v(\lambda)=(v_1(\mu), v_2(\mu), \ldots, v_{\ell(\mu)}(\mu),0,0,0).$$ The only parts with $2$-adic valuation zero are the three parts equal to $1$. Since $\mu$ satisfies (a), (b), and (c), so does $\lambda$. 
 
 \item $\lambda=\mu^{o+2}$ for some $\mu\in \ST(2t-1)$. Then $v(\lambda)=v(\mu)$ and from Remark \ref{R1} and the induction hypothesis, $\lambda$ satisfies (a), (b), and (c). 
\end{enumerate}

Next, we show by induction that all partitions of $n$ satisfying (a), (b), and (c) are in $\ST(n)$. By inspection, for $n=1,2,3$, the partitions in $\ST(n)$ are the only partitions satisfying conditions (a), (b), and (c). Let $n\geq 4$ be an integer and assume that if $m<n$ all partitions of $m$ satisfying (a), (b), and (c) are in $\ST(m)$. Let $\lambda$ be a partition of $n$ satisfying (a), (b), and (c).

If $n=2t$ for some $t>1$, by (b), $\lambda$ cannot have odd parts. The partition $\lambda{/2}$ has $v(\lambda{/2})=(v_1(\lambda)-1, v_2(\lambda)-1, \ldots, v_{\ell(\lambda)}(\lambda)-1)$ and thus $\lambda{/2}$ satisfies conditions (a) and (b). If $v_i(\lambda{/2})$ has multiplicity three, then $v_i(\lambda)$ has multiplicity three and $\lambda$ has at least two parts equal to $2^{v_i(\lambda)}$. Then $\lambda{/2}$ has at least two parts equal to $2^{v_i(\lambda)-1}$ and $\lambda{/2}$ satisfies (c). By induction, $\lambda{/2}\in \ST(t)$ and, by definition, $\lambda\in \ST(2t)$.

If $n=2t+1$ for some $t>1$, $\lambda$ must have odd parts, i.e, $v_{\ell(\lambda)}(\lambda)=0$. By (b), $\lambda$ has one or three odd parts and, by (c), if $\lambda$ has three odd parts, at least two odd parts must equal $1$. We transform $\lambda$ into a partition $\mu$ as follows. 
\begin{itemize}
 \item[(i)] If $m_\lambda(1)=1$, let $\mu=\lambda\setminus(1)$. Then $\mu\vdash 2t$.
 \item[(ii)] If $m_\lambda(1)=3$, let $\mu=\lambda\setminus(1,1,1)$. Then $\mu\vdash 2t-2$.
 \item[(iii)] If $m_\lambda(1)=2$, then $\mu$ is the partition obtained from $\lambda$ by subtracting two from the unique odd part greater than $1$. Then, $\mu\vdash 2t-1$.
 \end{itemize}
 It is easily seen that in each case the partition $\mu$ satisfies conditions (a), (b), and (c). By the induction hypothesis, $\mu$ is a semi-tribonacci partition. By the definition of $\mu$ from $\lambda$, it follows that $\lambda\in \ST(n)$.
\end{proof}

Theorem \ref{T-st} allows us to prove Corollary \ref{st} combinatorially. 
\begin{proof}[Combinatorial proof of Corollary \ref{st}]
If $n>0$, we define a bijection $h:\OBST(n)\to \ST(n)$ as follows. Let $\lambda\in \OBST(n)$.

If $2^k$ is non-overlined in $\lambda$, merge all parts equal to $2^k$ into a single part. Since $2^k$
 has odd multiplicity in $\lambda$ we get a part with $2$-adic valuation equal to $k$. 
 
 If $2^k$ is overlined in $\lambda$ and $2^k$ appears $2t+1$ times ($t>0$) in $\lambda$, remove the overline and merge $2t-1$ copies of $2^k$ into a single part. We get three parts with $2$-adic valuation $k$ and at least two of these parts are equal to $2^k$. (If $t=1$, all three parts are equal to $2^k$. If $t>1$, the parts with $2$-adic valuation $k$ are $2^k, 2^k$, and $2^k(2t-1)$.)
 
 By construction $h(\lambda)$ satisfies conditions (b) and (c) of Theorem \ref{T-st}. 
 Since the largest part of $\lambda$ is not overlined, in $\mu$ we get a single part with largest $2$-adic valuation and thus $h(\lambda)$ also satisfies condition (a). Therefore $h(\lambda)\in \ST(n)$.
 
For $\mu\in \ST(n)$, $h^{-1}(\mu)$ is the overpartition in $\OBST(n)$ whose parts are the parts of $\rep_2(\mu)$ with part $2^k$ overlined if and only if there are three parts in $\mu$ with $2$-adic valuation $k$. 
 \end{proof}
 The next example illustrates the bijection $h$ in the proof of Corollary \ref{st}.
\begin{example} 
 Let $\lambda=(8,8,8,8,8, \bar 4, 4,4,4,4,2,2,2,\bar 1,1,1,1,1)\in \OBST(71)$. Then $h(\lambda)=(40, 12,6, 4,4, 3,1,1)$. Conversely, if $\mu=(40, 12,6, 4,4, 3,1,1)=(2^3\cdot 5, 2^2\cdot 3 ,2\cdot 3, 2^2,2^2, 2^0\cdot3,2^0,2^0)\in \ST(71)$, then $$\rep_2(\mu)=(8,8,8,8,8,4,4,4,4,4,2,2,2,1,1,1,1,1).$$ Since in $\mu$ there are three parts with $2$-adic valuation $2$ and three parts with $2$-adic valuation $0$, we overline one part equal to $2^2$ and one part equal to $2^0$ to get $$h^{-1}(\mu)=(8,8,8,8,8, \bar 4, 4,4,4,4,2,2,2,\bar 1,1,1,1,1).$$
\end{example}

We mentioned in section \ref{prelim} that $\rep_2$ and $\bin$ are not injective as functions on $\P(n)$. However, the mapping $\rb:\ST(n)\to \B(n)\times \B(n)$ defined by $\rb (\lambda)= (\rep_2(\lambda), \bin(\lambda))$ is injective. 

 \begin{theorem} \label{rep-bin} Let $n$ be a non-negative integer. The mapping $\rb$ is one-to-one on $\ST(n)$. 
 \end{theorem}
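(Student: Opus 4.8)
The plan is to show that the pair $(\rep_2(\lambda), \bin(\lambda))$ determines $\lambda \in \ST(n)$ by reconstructing $\lambda$ from this data, and then observing that the reconstruction is unambiguous because of the structural constraints (a), (b), (c) in Theorem~\ref{T-st}. Since $\rep_2(\lambda)$ records, for each $k$, the \emph{sum} of all parts of $\lambda$ with $2$-adic valuation $k$ (as a block of parts equal to $2^k$), it is enough to recover, for each valuation $k$, the multiset of parts of $\lambda$ having that valuation; the partition $\lambda$ is then the disjoint union of these multisets over all $k$. Fix $k$. By conditions (b) and (c), the parts of $\lambda$ with $2$-adic valuation $k$ form a multiset of one of the following three types: (i) a single part $2^k m$ with $m$ odd; or (ii) three parts, at least two of which equal $2^k$, hence either $\{2^k, 2^k, 2^k\}$ or $\{2^k, 2^k, 2^k m\}$ with $m>1$ odd. (The largest-valuation block is forced to be of type (i) by (a), but we will not even need this.)

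The key step is to argue that $\rep_2(\lambda)$ and $\bin(\lambda)$ together distinguish these cases and pin down $m$. Let $S_k$ denote the sum of the parts of $\lambda$ with valuation $k$; this is $2^k$ times the number of copies of $2^k$ appearing in $\rep_2(\lambda)$, so $S_k$ is read off from $\rep_2(\lambda)$ alone. Writing $S_k = 2^k s_k$ with $s_k$ a positive integer, type (i) gives $s_k = m$ odd, while type (ii) gives $s_k = 2 + m$ with $m$ odd, so $s_k$ is odd, so \emph{parity of $s_k$ alone cannot separate the cases}; this is exactly where $\bin(\lambda)$ enters. Now examine the contribution of the valuation-$k$ block to $\bin(\lambda)$: writing each part in binary, a part equal to $2^k$ contributes the single power $2^k$, whereas a part $2^k m$ with $m>1$ odd contributes the powers $2^k \cdot 2^{j}$ over the positions $j \geq 1$ of $1$-bits in $m$, in particular contributes \emph{no} copy of $2^k$ and at least one power strictly above $2^k$. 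So the number of copies of $2^k$ in $\bin(\lambda)$ equals: $0$ in type (i) with $m>1$; $1$ in type (i) with $m=1$; $0$ in type (ii) with $m>1$; $3$ in type (ii) with $m=\,$all-ones, i.e.\ $\{2^k,2^k,2^k\}$. Combined with $s_k$, this separates $\{2^k,2^k,2^k\}$ (then $s_k=3$ and $2^k$ has multiplicity $3$ in $\bin(\lambda)$) and $\{2^k,2^k,2^km\}$, $m>1$ (then $s_k\geq 5$ and $2^k$ has multiplicity $0$ in $\bin(\lambda)$) from $\{2^km\}$, $m\geq 1$. The remaining ambiguity is only between type (i) with value $s_k$ and type (ii) with the single large part equal to $2^k s_k'$ where $s_k = 2 + s_k'$: but in type (i) the block contributes $\bin(2^k s_k)$ to $\bin(\lambda)$, while in type (ii) it contributes $2^k + 2^k + \bin(2^k s_k')$, which contains at least two copies of $2^k$; so the multiplicity of $2^k$ in $\bin(\lambda)$ being $\geq 2$ forces type (ii), and then $m = s_k - 2$ is determined, while multiplicity $\leq 1$ forces type (i) with $m = s_k$. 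In every case the block is recovered.

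Carrying this out for each $k$ reconstructs $\lambda$ from $(\rep_2(\lambda),\bin(\lambda))$, which is precisely injectivity of $\rb$ on $\ST(n)$. The main obstacle is purely bookkeeping: the contributions of different valuation-$k$ blocks to $\bin(\lambda)$ can overlap (the block at valuation $k$ contributes powers $\geq 2^k$, which collide with powers contributed by lower blocks), so one must argue the decoding in the right order. The clean way is induction on the largest part: equivalently, decode from the \emph{top} valuation downward — having determined the valuation-$k'$ blocks for all $k' > k$, subtract their (known) contributions from $\bin(\lambda)$, after which the multiplicity of $2^k$ in the remaining binary multiset is exactly the quantity analyzed above, and the argument of the previous paragraph applies verbatim. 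An alternative, and perhaps slicker, route: show directly that $\rep_2(\lambda) = \rep_2(\mu)$ and $\bin(\lambda) = \bin(\mu)$ for $\lambda, \mu \in \ST(n)$ implies $v(\lambda) = v(\mu)$ (since $\rep_2$ already determines, for each $k$, whether valuation $k$ occurs and with total mass $S_k$, and (b)--(c) then determine whether its multiplicity in $v(\lambda)$ is $1$ or $3$ via the parity/size of $s_k$ as above), and then, valuation by valuation, that the single "free" odd cofactor $m$ is pinned down by $\bin$; this avoids explicit induction on part size but still requires the top-down subtraction to handle overlaps cleanly. Either way the content is the case analysis of the preceding paragraph.
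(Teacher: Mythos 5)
There is a genuine gap, and it is the decoding order. Your core idea is the right one and is essentially the paper's: for a block of parts of $\lambda$ with a fixed $2$-adic valuation $k$, the multiplicity of $2^k$ contributed to $\bin(\lambda)$ by that block equals the number of parts in the block (each part $2^k m$ with $m$ odd contributes exactly one copy of $2^k$, since the lowest bit of an odd $m$ is always set --- note this also corrects your case table, where you assert that a part $2^k m$ with $m>1$ odd contributes \emph{no} copy of $2^k$; it contributes exactly one), so by conditions (b) and (c) of Theorem~\ref{T-st} that local multiplicity is $1$ or $3$, and together with the total mass $S_k$ read off from $\rep_2(\lambda)$ this pins down the block. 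The problem is that the multiplicity of $2^k$ in $\bin(\lambda)$ is polluted by blocks at valuations \emph{strictly below} $k$ (a part $2^{k'}m'$ with $k'<k$ contributes a copy of $2^k$ whenever bit $k-k'$ of $m'$ is set), whereas blocks at valuations above $k$ contribute only powers strictly greater than $2^k$ and hence nothing at position $k$. Your top-down pass therefore subtracts contributions that were never there and leaves in place exactly the contributions that cause the ambiguity. Concretely, take $\lambda=(3,2)\in\ST(5)$: the top valuation is $1$, there is nothing above it to subtract, and $\bin(3,2)=(2,2,1)$ has two copies of $2^1$; your rule ``multiplicity $\geq 2$ forces type (ii)'' would then wrongly decode a three-part block at valuation $1$, when in fact the valuation-$1$ block is the single part $2$ and the extra copy of $2$ comes from $\bin(3)=2+1$.

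The fix is to decode bottom-up, and this is exactly what the paper does: it runs an induction on $n$, looks at the \emph{smallest} part $2^k$ of $\bin(\lambda)$ (equivalently the smallest valuation occurring in $\lambda$), where no lower block exists to interfere, reads off the block size ($1$ or $3$) from the multiplicity of $2^k$ in $\bin(\lambda)$ and the block's content from the multiplicity of $2^k$ in $\rep_2(\lambda)$, strips that block from $\lambda$, $\rep_2(\lambda)$, and $\bin(\lambda)$, checks via Theorem~\ref{T-st} that the stripped partition is again a semi-tribonacci partition, and applies the induction hypothesis. With the order reversed (and the bit-count corrected), your argument becomes the paper's proof; as written, the reconstruction fails on partitions as small as $(3,2)$.
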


 \begin{proof} 
We prove the theorem by induction on $n$. The statement is true for $n=0,1,2$ since in these cases $\ST(n)$ consists of binary partitions. Let $n>2$ be an integer and suppose $\rb$ is one-to-one on $\ST(m)$ for all $m<n$. Let $\lambda, \mu\in \ST(n)$ and assume $$\rb(\lambda)= \rb(\mu)=:(\alpha, \beta).$$
 By Theorem \ref{T-st}, $\alpha\in \OB(n)$; also, $\beta$ is a binary partition of $n$. Note that if $u$ is an integer with $\val_2(u)=i$, then $2^i$ is the smallest part in both $\rep_2(u)$ and $\bin(u)$. Let $2^k$ be the smallest part of both $\beta$ and $\alpha$. Thus $k$ is the smallest part in both $v(\lambda)$ and $v(\mu)$. 
 Let $b$ be the multiplicity of $2^k$ in $\beta$. 
 By Theorem \ref{T-st}, it follows that $b=1$ or $3$. Let $a$ be the multiplicity of $2^k$ in $\alpha$, so $a$ is odd. By the definition of $\bin$, if $b=1$, then $a2^k$ is the unique part with valuation $k$ in each of $\lambda$ and $\mu$. If $b=3$, then the parts with $2$-adic valuation $k$ in both $\lambda$ and $\mu$ are precisely $(a-2)2^k, 2^k, 2^k$. Let $\widetilde \lambda$ and $\widetilde \mu$ be the partitions obtained from $\lambda$ and $\mu$ by removing the parts with $2$-adic valuation $2^k$. Let $$\widetilde \alpha=\alpha \setminus (\underbrace{2^k, 2^k, \ldots, 2^k}_{a \text{ times}})$$ and $$\widetilde \beta=\begin{cases}\beta\setminus (2^k) & \text{ if } b=1\\\beta\setminus (\bin((a-2)2^k)\sqcup (2^k, 2^k))& \text{ if } b=3.\end{cases}$$ It follows from Theorem \ref{T-st} that $\widetilde\lambda, \widetilde \mu\in \ST(n-a2^k)$. Moreover, $\widetilde\alpha=\rep_2(\widetilde\lambda)=\rep_2(\widetilde\mu)$ and $\widetilde\beta= \bin(\widetilde\lambda)= \bin(\widetilde\mu)$. By the induction hypothesis, $\widetilde\lambda=\widetilde\mu$, and thus $\lambda=\mu$. 
 \end{proof}

 \begin{example} This table shows the partitions $\lambda$ in $\ST(5)$ with the corresponding distinct pairs of binary partitions. 
$$\begin{array}{c|c|c} \lambda& \rep_2(\lambda) & \bin(\lambda)\\ \hline 5 & 1+1+1+1+1 & 4+1 \\ 3+2 & 2+1+1+1 & 2+2+1 \\ 4+1 & 4+1 & 4+1 \\ 2+1+1+1 & 2+1+1+1 & 2+1+1+1 \end{array} $$
 
 \end{example}

 \begin{remark} Working as in the proof of Theorem \ref{rep-bin}, $\bin(\lambda)$ gives the necessary information for overlining parts in $\rep_2(\lambda)$. If the part $2^k$ with smallest valuation in $\bin(\lambda)$ has multiplicity $3$ in $\bin(\lambda)$, then the first instance of $2^k$ is overlined in $\rep_2(\lambda)$. Then, one works with $\widetilde \alpha$ and $\widetilde \beta$ to determine if the first instance of $2^j$, where $j$ is the smallest valuation in $\widetilde\beta$ should be overlined in $\rep_2(\lambda)$, and so on. 
 \end{remark}

\subsection{Parity Results for $\st(n)$}

We establish parity results for the sequence $\st(n)$, and therefore for the sequences $|\OBST(n)|$ and $|\ST(n)|$. Here and throughout, we write $\equiv$ to mean congruence modulo $2$.

\begin{theorem}\label{cong_dt} For $n\geq 0$, $\st(2n+1)+\st(4n+1)\equiv 0$.
\end{theorem}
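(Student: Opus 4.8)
The plan is to set $g(n) := \st(2n+1)+\st(4n+1)$ and to show that $g(n)\equiv g(n-1)$ for every $n\geq 1$. Since the base case is $g(0)=\st(1)+\st(1)=2\equiv 0$, an easy induction on $n$ then gives $g(n)\equiv 0$ for all $n\geq 0$, which is exactly the claim.

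\textbf{The recursion modulo $2$.} To prove $g(n)\equiv g(n-1)$ I would repeatedly apply the two defining recurrences of $\st$ and use the fact that, working modulo $2$, any term appearing an even number of times vanishes. For $n\geq 1$ write $4n+1=2(2n)+1$ and expand
\[
\st(4n+1)=\st(4n)+\st(4n-1)+\st(4n-2)=\st(n)+\st(4n-1)+\st(2n-1),
\]
using $\st(4n)=\st(2n)=\st(n)$ and $\st(4n-2)=\st(2(2n-1))=\st(2n-1)$. On the other hand,
\[
\st(2n+1)=\st(2n)+\st(2n-1)+\st(2n-2)=\st(n)+\st(2n-1)+\st(n-1).
\]
Adding these two displays and cancelling the doubled terms $\st(n)$ and $\st(2n-1)$ gives $g(n)\equiv \st(n-1)+\st(4n-1)\pmod 2$. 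Now put $m=n-1\geq 0$ and expand $\st(4m+3)=\st(2(2m+1)+1)=\st(4m+2)+\st(4m+1)+\st(4m)=\st(2m+1)+\st(4m+1)+\st(m)$, using $\st(4m+2)=\st(2m+1)$ and $\st(4m)=\st(2m)=\st(m)$. Hence $\st(m)+\st(4m+3)\equiv \st(2m+1)+\st(4m+1)=g(m)=g(n-1)\pmod 2$. Combining the two computations yields $g(n)\equiv g(n-1)$, as wanted.

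\textbf{Order of the steps.} (i) Justify each expansion above, checking that the recurrences are applied at legitimate arguments and handling the small cases — in particular that $\st(4n)=\st(n)$, $\st(4n-2)=\st(2n-1)$ and $\st(2n-2)=\st(n-1)$ hold for all $n\geq 1$ (for $n=1$ one just uses $\st(0)=0$), and similarly that $\st(4m)=\st(m)$ holds for all $m\geq 0$. (ii) Derive $g(n)\equiv \st(n-1)+\st(4n-1)$. (iii) Derive $\st(n-1)+\st(4n-1)\equiv g(n-1)$. (iv) Verify $g(0)\equiv 0$ and conclude by induction.

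\textbf{Expected difficulty.} I do not anticipate a real obstacle: the whole argument is bookkeeping with the recurrence, and the only mild subtlety is making sure the boundary values $n=1$ and $m=0$ are covered when invoking $\st(2k)=\st(k)$ (which is stated for $k\geq 1$ but holds trivially at $k=0$). As a sanity check against the table, $g(1)=\st(3)+\st(5)=6$, $g(2)=\st(5)+\st(9)=14$, and $g(3)=\st(7)+\st(13)=7+21=28$ are all even.
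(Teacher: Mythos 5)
Your proof is correct and follows essentially the same route as the paper's: a one-step induction showing $\st(2n+1)+\st(4n+1)\equiv \st(2(n-1)+1)+\st(4(n-1)+1)$ by expanding via the recurrence and cancelling doubled terms modulo $2$. The only cosmetic difference is that you pass through the intermediate quantity $\st(n-1)+\st(4n-1)$ and expand once more, whereas the paper rewrites everything at arguments near $4n$ and lands on $\st(4n-3)+\st(4n-2)=g(n-1)$ directly; your boundary checks at $n=1$ and $m=0$ are all sound.
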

\begin{proof} We prove the statement by induction. The statement is true for $n=0$ by inspection. Let $n\geq 1$ and assume that $\st(2k+1)+\st(4k+1)\equiv 0$ for all $k<n$. From the recurrence, we obtain 
\begin{align*}\st(2n+1)& =\st(2n)+\st(2n-1)+\st(2n-2)\\ & =\st(4n)+\st(4n-2)+\st(4n-4).\end{align*}Also, 
\begin{align*}\st(4n+1) & = \st(4n)+\st(4n-1)+\st(4n-2)\\ & = \st(4n)+\st(4n-2)+\st(4n-3)+\st(4n-4)+\st(4n-2).\end{align*}
Then, \begin{align*}\st(2n+1)+\st(4n+1)& \equiv \st(4n-3)+\st(4n-2)\\ & =\st(2(n-1)+1)+ \st(4(n-1)+1) \equiv 0.\end{align*}

\end{proof}

\begin{corollary} For $n\geq 0$, $\st(16n+7)\equiv \st(16n+8)\equiv \st(16n+4)$.
\end{corollary}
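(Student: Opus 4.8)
The plan is to derive the corollary from just two instances of Theorem~\ref{cong_dt} together with the self-similarity $\st(2m)=\st(m)$; no new induction is needed. First I would dispose of the equivalence $\st(16n+8)\equiv\st(16n+4)$. Iterating the halving rule gives $\st(16n+8)=\st(8n+4)=\st(4n+2)=\st(2n+1)$ and $\st(16n+4)=\st(8n+2)=\st(4n+1)$, and Theorem~\ref{cong_dt} with $m=n$ is precisely $\st(2n+1)+\st(4n+1)\equiv 0$, i.e. $\st(2n+1)\equiv\st(4n+1)$. Combining these three facts yields $\st(16n+8)\equiv\st(16n+4)$.

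Next I would handle $\st(16n+7)\equiv\st(16n+4)$. Writing $16n+7=2(8n+3)+1$ and applying the odd-index recurrence once gives $\st(16n+7)=\st(16n+6)+\st(16n+5)+\st(16n+4)$; then $\st(16n+6)=\st(8n+3)$ and $\st(16n+4)=\st(4n+1)$ by the halving rule. The key observation is that $8n+3=2(4n+1)+1$ and $16n+5=4(4n+1)+1$, so Theorem~\ref{cong_dt} with $m=4n+1$ gives exactly $\st(8n+3)+\st(16n+5)\equiv 0$. Hence $\st(16n+7)\equiv\st(4n+1)=\st(16n+4)$, and together with the first paragraph this completes the proof.

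I do not anticipate any genuine obstacle: the substantive parity input is already Theorem~\ref{cong_dt}, and this corollary merely repackages it through $\st(2m)=\st(m)$. The only routine points to verify are that the single use of the recurrence is in range (its parameter is $m=8n+3\geq 1$ for every $n\geq 0$, so no terms with negative index occur) and, if desired, a quick check at $n=0$, where $\st(7)=7$, $\st(8)=1$, and $\st(4)=1$ are all odd, consistent with the claim.
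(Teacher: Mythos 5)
Your proof is correct and uses exactly the ingredients the paper cites (the recurrence together with Theorem~\ref{cong_dt}); the paper itself omits the details, and your argument supplies them cleanly. Both applications of Theorem~\ref{cong_dt} (at $n$ and at $4n+1$) and the single use of the odd-index recurrence check out.
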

\begin{proof}The congruence follows from the recurrence relation and the statement of Theorem \ref{cong_dt}. We omit the details. 
\end{proof}

\subsection{Total Number of Parts in $\ST(n)$}

Let $\pst(n)$ be the total number of parts in the partitions of $\ST(n)$, whose initial conditions and recurrences imply that $\pst(1)=1$, $\pst(2)=1$, $\pst(3)=3$, and for $n>1$, 
\begin{align}  \pst(2n)\notag & =\pst(n),\\ \pst(2n+1) \notag & = \pst(2n)+\st(2n)+\pst(2n-1)+\pst(2n-2) +3\st(2n-2) \\ \label{pstnum} & = \pst(n)+\st(n)+\pst(2n-1)+\pst(n-1)+3\st(n-1). \end{align}

The following table shows the values of the sequence $\pst(n)$ for $n = 1, 2, \ldots, 12$.

$$\begin{array}
{c|ccccccccccccc}
n & 1 & 2 & 3 & 4 & 5 & 6 & 7 & 8 & 9 & 10 & 11 & 12 \\
\hline
\pst(n) & 1 & 1 & 3 & 1 & 9 & 3 & 18 & 1 & 29 & 9 & 46 & 3
\end{array} $$

\begin{theorem} For $n\geq 0$, \begin{equation}\label{pst-c}\mathrm{pst}(4n+1)\equiv \mathrm{pst}(4n+2).\end{equation}\end{theorem}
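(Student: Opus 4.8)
The plan is to prove the congruence $\pst(4n+1)\equiv\pst(4n+2)\pmod 2$ by induction on $n$, using the recurrence \eqref{pstnum} for $\pst$ together with the recurrence for $\st$ and the parity result of Theorem \ref{cong_dt}. The first move is to simplify: since $\pst(4n+2)=\pst(2(2n+1))=\pst(2n+1)$ by the even-index rule, the claim reduces to showing $\pst(4n+1)\equiv\pst(2n+1)\pmod 2$. This is the real target, and it already looks promising because the two arguments $4n+1$ and $2n+1$ are ``one doubling apart,'' which is exactly the kind of relation the self-similar recurrence is built to handle.

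Next I would expand both sides using \eqref{pstnum}. Working mod $2$, write $\pst(4n+1)=\pst(2(2n)+1)\equiv\pst(2n)+\st(2n)+\pst(4n-1)+\pst(2n-1)+3\st(2n-1)\equiv \pst(n)+\st(n)+\pst(4n-1)+\pst(2n-1)+\st(2n-1)$, and similarly $\pst(2n+1)\equiv\pst(n)+\st(n)+\pst(2n-1)+\pst(n-1)+\st(n-1)$. Subtracting, the terms $\pst(n)+\st(n)+\pst(2n-1)$ cancel, so modulo $2$ the desired identity $\pst(4n+1)\equiv\pst(2n+1)$ is equivalent to
\begin{equation*}
\pst(4n-1)+\st(2n-1)\equiv \pst(n-1)+\st(n-1),
\end{equation*}
i.e. to $\big(\pst(4n-1)-\pst(n-1)\big)+\big(\st(2n-1)-\st(n-1)\big)\equiv 0$. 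Here $\pst(4n-1)-\pst(n-1)=\pst(4(n-1)+3)-\pst(4(n-1)+1)+\pst(4(n-1)+1)-\pst(n-1)$ can be related back to the inductive hypothesis once one also controls the difference $\pst(4m+3)\equiv\pst(4m+1)$ (or its failure), so it may be cleanest to prove the pair of congruences $\pst(4n+1)\equiv\pst(4n+2)$ and an auxiliary statement about $\pst(4n+3)$ simultaneously by induction, mirroring how Theorem \ref{cong_dt} packages $\st(2n+1)$ with $\st(4n+1)$. For the $\st$-part, $\st(2n-1)-\st(n-1)$: when $n-1$ is even this is $\st(2n-1)-\st((n-1)/2\cdot 2)=\st(2n-1)-\st(2\lceil\cdot\rceil)$-type bookkeeping, but more usefully $\st(2n-1)=\st(2(n-1)+1)$ and $\st(n-1)$ differ in a way governed precisely by Theorem \ref{cong_dt} after one more halving, so $\st(2n-1)+\st(n-1)$ (mod $2$) collapses via $\st(4k+1)\equiv\st(2k+1)$.

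The main obstacle I anticipate is organizing the induction so that the reductions close up: a single congruence $\pst(4n+1)\equiv\pst(4n+2)$ probably does not self-propagate, because expanding it produces a $\pst(4n-1)$ term on one branch and a $\pst(n-1)$ term on the other, and these live in different residue classes mod $4$. The remedy is to strengthen the inductive statement — likely proving simultaneously that $\pst(4n+1)\equiv\pst(4n+2)$ together with a companion relating $\pst(4n+3)$ to lower-index values — and to handle the even-argument collapses ($\pst(2m)=\pst(m)$, $\st(2m)=\st(m)$) and the parity of the stray $\st$-terms by invoking Theorem \ref{cong_dt}. Once the right bundle of congruences is chosen, each inductive step is a mechanical mod-$2$ expansion via \eqref{pstnum} and the $\st$-recurrence, with the base cases $n=0,1$ checked against the displayed table ($\pst(1)=1\equiv1=\pst(2)$, $\pst(5)=9\equiv3=\pst(6)$); I would present the strengthened statement, do the clean cancellation above, and verify the residual identity reduces to an instance of the inductive hypothesis plus Theorem \ref{cong_dt}.
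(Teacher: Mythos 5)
Your reduction is on the right track but the argument does not close, and the place where you hesitate is exactly where the missing step lives. After replacing $\pst(4n+2)$ by $\pst(2n+1)$ and expanding both sides via \eqref{pstnum}, you correctly arrive at the residual condition
$$\pst(4n-1)+\st(2n-1)\equiv \pst(n-1)+\st(n-1) \pmod 2 ,$$
but then you speculate that the single congruence will not self-propagate and propose strengthening the induction with a companion statement about $\pst(4n+3)$, plus an appeal to Theorem \ref{cong_dt} to handle $\st(2n-1)+\st(n-1)$. Neither is needed, and the latter would not work as stated: Theorem \ref{cong_dt} with $k=n-1$ gives $\st(2n-1)\equiv\st(4n-3)$, which does not relate $\st(2n-1)$ to $\st(n-1)$. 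The missing idea is simply to apply the recurrence \eqref{pstnum} once more, to $\pst(4n-1)=\pst(2(2n-1)+1)$:
$$\pst(4n-1)\equiv \pst(2n-1)+\st(2n-1)+\pst(4n-3)+\pst(n-1)+\st(n-1).$$
Substituting this into the residual condition, the terms $\st(2n-1)$, $\pst(n-1)$, and $\st(n-1)$ all cancel in pairs, leaving exactly $\pst(4n-3)+\pst(2n-1)\equiv 0$, which is the inductive hypothesis for $n-1$ (since $\pst(4(n-1)+2)=\pst(2(2n-1))=\pst(2n-1)$). This is precisely the paper's argument: a double application of the recurrence makes the single congruence self-propagating, with no auxiliary parity facts about $\st$ required. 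So the gap is concrete — you stop one recurrence-expansion short of the cancellation that finishes the induction — and the repair is mechanical once that expansion is performed.
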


\begin{proof}

We use induction. If $n=1$, we have 
$\pst(1)=1=\pst(2)$ and \eqref{pst-c} holds. Assume \eqref{pst-c} is true with $n$ replaced by $n-1$, that is, $$\pst(4n-3)\equiv
\pst(4n-2).$$
The left-hand side of \eqref{pst-c} equals 
\begin{align*}
\pst(4n+1)& \equiv\pst(2n)+\st(2n)+\pst(4n-1)+\pst(2n-1)+\st(2n-1)\\ & = \pst(n)+\st(n)+\pst(2(2n-1)+1)+\pst(2n-1)+\st(2n-1)\\ & =\pst(n)+\st(n)+\pst(2n-1)+\st(2n-1)+\pst(2(2n-1)-1)\\ & \qquad \qquad +\pst((2n-1)-1)+3\st((2n-1)-1)+\pst(2n-1)+\st(2n-1)\\ & =\pst(n)+\st(n)+2\pst(2n-1)+4\st(2n-1)+\pst(4n-3)\\ & \qquad \qquad +\pst(2n-2)+\st(2n-2))\\ & =\pst(n)+\st(n)+2\pst(2n-1)+4\st(2n-1)+\pst(4n-3)\\ & \qquad \qquad +\pst(n-1)+\st(n-1))\\ & \equiv
\pst(n)+\st(n)+\pst(4n-3)+\pst(n-1)+\st(n-1).
\end{align*}
The right-hand side of \eqref{pst-c} equals 
\begin{align*} \pst(4n+2)& =\text{pst}(2n+1)\\ & =\pst(n)+\st(n)+\pst(2n-1)+\pst(n-1)+3\st(n-1).
\end{align*}
Adding the two sides,
\begin{align*}
\pst(4n+1)+\pst(4n+2)& =\pst(n)+\st(n)+\pst(4n-3)+\pst(n-1)+\st(n-1)\\ & \qquad +\pst(n)+\st(n)+\pst(2n-1)+\pst(n-1)+3\st(n-1)\\ & \equiv
\pst(2n-1)+\pst(4n-3)\equiv 0.\end{align*}
\end{proof}
\section{Semi-Padovan partitions} \label{sec_spa}

\begin{definition}  The semi-Padovan sequence, $\{\spa(n)\}_{n\geq 0}$ is defined recursively by $\spa(0)=1$, $\spa(1)=0$, and for $n\geq 1$, \begin{align*}\spa(2n)&=\spa(n),\\ \spa(2n+1)&=\spa(2n-1)+\spa(2n-2).\end{align*} 
\end{definition}

The following table shows the values of the semi-Padovan sequence $\spa(n)$ for $n = 0, 1, \ldots, 12$.
$$\begin{array}
{c|ccccccccccccc}
n & 0 & 1 & 2 & 3 & 4 & 5 & 6 & 7 & 8 & 9 & 10 & 11 & 12 \\
\hline
\spa(n) & 1 & 0 & 0 & 1 & 0 & 1 & 1 & 1 & 0 & 2 & 1 & 2 & 1
\end{array} $$
This is the sequence $\{f(n)\}_{n\geq 0}$ in Section \ref{sec_gen} with $a_0=1$, $a_1=0$, and $c_1=0$, $c_2=c_3=1$, Thus, we are in Case 2 of Section \ref{sec_gen} and \eqref{small} becomes $$F(x) = \prod_{r=0}^{\infty} \left(1+\frac{x^{3\cdot 2^r}}{1-x^{2^{r+1}}}\right),$$
which is the generating function for $|\OB_{\mathrm{R}}(n)|$, where $\OB_{\mathrm{R}}(n)$ is the set of odd binary partitions of $n$ such that each part is repeated, that is, each part occurs with multiplicity at least $3$. For example, $$\OB_{\mathrm{R}}(9)=\{2 + 2 + 2 + 1 + 1 + 1, \, 1 + 1 + 1 + 1 + 1 + 1 + 1 + 1 + 1 \}.$$

\begin{definition}
The semi-Padovan partitions are defined recursively by $\SPa(0)=\{(\, )\}$ (i.e., the set containing the empty partition),
$\SPa(1)=\emptyset$, $\SPa(3)=\{(3)\}$, and \begin{align*}\SPa(2n)& = 2 \SPa(n), \text{ \ \ if } n\geq 1, \\ \SPa(2n+1) & = \SPa(2n-1)^{o+2} \bigcup \SPa(2n-2) \sqcup (1,1,1), \text{ \ \ if } n\geq 2.\end{align*}\end{definition}

 \begin{example} The table below shows the partitions in $\SPa(n)$ for $n=1,\ldots,16$.  Note that $\SPa(n)$ is empty if $n$ is a power of $2$.
$$\begin{array}{c|c} n& \SPa(n) \\ \hline 1 & \\ 2 & \\ 3 & 3 \\ 4& \\ 5 & 5 \\ 6&6 \\ 7 & 7 \\ 8 & \\ 9 & 9, 6+1+1+1 \\ 10 & 10 \\ 11 & 11, 6+3+1+1 \\ 12 & 12 \\ 13 & 13, 6+5+1+1, 10+1+1+1 \\ 14 & 14 \\ 15 & 15, 7+6+1+1, 10+3+1+1, 10+3+1+1, 12+1+1+1 \\ 16 & \end{array} $$
 \end{example}
 
If $n$ is odd, partitions in $\SPa(n)$ have at least one odd part. If $n$ is even, partitions in $\SPa(n)$ have only even parts. Thus, partitions in $\SPa(2n-1)^{o+2}$ have an odd part greater than $1$ while partitions in $\SPa(2n)\sqcup (1,1,1)$ have no odd part greater than $1$. 
Therefore the union in the definition of $\SPa(2n+1)$ is disjoint, so $|\SPa(n)|=\spa(n)$. Then, the statement of the next theorem follows from the interpretation of the generating function for $\spa(n)$.

\begin{theorem}\label{SP-OB3} For $n\geq 1$, $|\SPa(n)|=|\OB_\R(n)|$.\end{theorem}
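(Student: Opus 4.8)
The plan is to deduce the theorem directly from material already assembled above. The paragraph preceding the statement shows that the union defining $\SPa(2n+1)$ is disjoint — its two pieces are separated by whether the partition has an odd part greater than $1$ — and hence $|\SPa(n)|=\spa(n)$ for every $n\ge 1$. Moreover $\spa(n)$ is the instance of the sequence $f(n)$ of Section~\ref{sec_gen} that falls under Case~2, so by \eqref{small} its generating function is $\prod_{r\ge 0}\bigl(1+\frac{x^{3\cdot 2^r}}{1-x^{2^{r+1}}}\bigr)$. It therefore suffices to recognize this product as $\sum_{n\ge 0}|\OB_\R(n)|\,x^n$.

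That recognition is the (routine) core of the argument. Expanding the $r$-th factor as $1+\sum_{j\ge 1}x^{(2j+1)2^r}$ and multiplying out, a contribution to $x^n$ amounts to choosing, for each $r\ge 0$, either the term $1$ (the power $2^r$ is absent) or a term $x^{(2j+1)2^r}$ with $j\ge 1$ (the power $2^r$ occurs exactly $2j+1\ge 3$ times). These choices are precisely the data of a partition of $n$ into powers of $2$ in which every part that occurs does so with odd multiplicity at least $3$, i.e.\ an element of $\OB_\R(n)$. Hence the coefficient of $x^n$ in the product is $|\OB_\R(n)|$, and combining gives $|\SPa(n)|=\spa(n)=|\OB_\R(n)|$.

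In the spirit of the combinatorial proof of Corollary~\ref{st}, one can instead make the equality transparent by checking that $\rep_2$ restricts to a bijection $\SPa(n)\to\OB_\R(n)$. Arguing by induction along the recursion, one uses $\rep_2(2\mu)=2\,\rep_2(\mu)$, $\rep_2(\mu\sqcup(1,1,1))=\rep_2(\mu)\sqcup(1,1,1)$, and the fact that the largest odd part $m$ of $\mu$ has $\val_2(m)=0$ and $\rep_2(m)=(1^m)$, so that $\rep_2(\mu^{o+2})$ is obtained from $\rep_2(\mu)$ by adjoining two more $1$'s. Since a partition in $\SPa$ of even weight has only even parts (so its $\rep_2$ image has no $1$'s), along the recursion the multiplicity of $1$ stays odd and becomes $\ge 3$ as soon as it is positive, while the multiplicities of the parts $2^r$ with $r\ge1$ are inherited from lower levels; this gives $\rep_2(\SPa(n))\subseteq\OB_\R(n)$. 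Injectivity and surjectivity then also follow by induction: $\rep_2^{-1}$ merges equal parts, and the number of $1$'s in $\rep_2(\lambda)$ — at least $5$ for a partition produced by the $^{o+2}$ branch but exactly $3$ for one produced by the $\sqcup(1,1,1)$ branch — reveals which branch $\lambda$ came from.

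Neither route presents a genuine obstacle; the content is bookkeeping of series already displayed, or a routine induction. The only point needing a word of care in the bijective approach is that $^{o+2}$ always acts on an odd part, hence on a part of $2$-adic valuation $0$, so that passing to $\rep_2$ has a uniform effect — and this is immediate from the definition. I would present the generating-function route as the proof, consistent with the remark preceding the statement that it ``follows from the interpretation of the generating function for $\spa(n)$''.
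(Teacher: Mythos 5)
Your proposal is correct and follows essentially the same route as the paper: the disjointness of the union in the recursion gives $|\SPa(n)|=\spa(n)$, and the Case~2 generating function $\prod_{r\ge 0}\bigl(1+\frac{x^{3\cdot 2^r}}{1-x^{2^{r+1}}}\bigr)$ is read off as $\sum_n|\OB_\R(n)|x^n$, exactly as the paper asserts (you merely spell out the coefficient extraction it leaves implicit). Your alternative bijection is also the paper's: its combinatorial proof uses $\rep_2\circ\varphi^{-1}$, which coincides with the restriction of $\rep_2$ to $\SPa(n)$, though note the inverse map $\OB_\R(n)\to\SPa(n)$ is not literally ``merge equal parts'' (that lands in $\SF'(n)$) but merging followed by the splitting $2^km\mapsto 2^k(m-2),2^k,2^k$ on the non-maximal valuations.
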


The next theorem introduces a non-recursive description of the partitions in $\SPa(n)$. It will lead to a combinatorial proof of Theorem \ref{SP-OB3}. We use the notation introduced before Theorem \ref{T-st}.

\begin{theorem}\label{spa-nonrec} Let $n>0$. The set $\SPa(n)$ consists of all partitions $\lambda$ of $n$ satisfying the following three conditions:
\begin{itemize}\item[(a)] $v_1(\lambda)$ has multiplicity one in $v(\lambda)$; 
\item[(b)] if $1<j\leq \ell(\lambda)$, then $v_j(\lambda)$ has multiplicity three and 
$\lambda$ has at least two parts equal to $2^{v_j(\lambda)}$;
\item[(c)] $2^{v_1(\lambda)}$ is not a part of $\lambda$.
\end{itemize}
\end{theorem}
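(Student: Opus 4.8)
The plan is to prove Theorem~\ref{spa-nonrec} by induction on $n$, following exactly the pattern used in the proof of Theorem~\ref{T-st}. I would set up a double induction: first show every $\lambda \in \SPa(n)$ satisfies (a), (b), (c); then show conversely that every partition of $n$ satisfying (a), (b), (c) lies in $\SPa(n)$. The base cases $n = 1, 2, 3$ (and perhaps $n=4,5$ to get past the powers of $2$ and the first genuinely interesting odd case) are handled by direct inspection against the explicit table of $\SPa(n)$; note that condition (c) forces $\SPa(2^k) = \emptyset$, matching the recursive definition.

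For the forward direction, let $\lambda \in \SPa(n)$ with $n \geq 4$. If $n = 2t$, then $\lambda = 2\mu$ for some $\mu \in \SPa(t)$, and since $v(\lambda)$ is obtained from $v(\mu)$ by adding $1$ to every entry while the parts equal to $2^{v_j(\mu)}$ in $\mu$ correspond bijectively to parts equal to $2^{v_j(\mu)+1} = 2^{v_j(\lambda)}$ in $\lambda$, all three conditions transfer from $\mu$ to $\lambda$ verbatim (condition (c): $2^{v_1(\mu)} \notin \mu \Rightarrow 2^{v_1(\lambda)} = 2^{v_1(\mu)+1} \notin \lambda$). If $n = 2t+1$ there are two cases. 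Either $\lambda = \mu^{o+2}$ with $\mu \in \SPa(2t-1)$: then $v(\lambda) = v(\mu)$ since adding $2$ to the largest odd part does not change its $2$-adic valuation (it stays odd), and moreover since $n$ is odd $\mu$ has an odd part $>1$ after the operation, so $2^0$ is still not forced as a part in a way that violates (c) — more precisely (c) for $\lambda$ follows from (c) for $\mu$ because the valuations are identical and the multiset of parts with each positive valuation is unchanged, while the unique smallest-valuation part stays $\neq 1$. Or $\lambda = \mu \sqcup (1,1,1)$ with $\mu \in \SPa(2t-2)$: then $\mu$ has only even parts (as $2t-2$ is even), so $v(\mu)$ has all positive entries, appending $(0,0,0)$ gives a valuation $0$ of multiplicity exactly three with two parts equal to $2^0 = 1$, verifying (b); (a) and (c) for $\lambda$ follow from (a), (c) for $\mu$.

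For the converse, let $\lambda \vdash n$ satisfy (a), (b), (c) with $n \geq 4$. If $n = 2t$: by (b) every part with $j > 1$ has positive even valuation, and (c) rules out an odd value $v_1 = 0$ being the unique top valuation together with $2^0 \in \lambda$ — I need to argue $\lambda$ has no odd parts at all. The key point: an odd part would have valuation $0$; by (a) it cannot be the valuation of index $1$ unless it is the unique smallest part $2^0$, contradicting (c); if it has index $>1$, by (b) valuation $0$ has multiplicity three with two parts equal to $1$, but then where does the top part sit — actually valuation $0$ having multiplicity $3$ is allowed by (b), so I must rule this out using the parity of $n$: three odd parts with two of them equal to $1$ means $\lambda = \nu \sqcup (1,1,a)$ with $a$ odd, $|\nu|$ even, hence $n = |\nu| + a + 2 \equiv a + 2 \equiv 1 \pmod 2$, contradicting $n$ even. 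So $\lambda$ has only even parts, $\lambda/2$ satisfies (a), (b), (c) (the multiplicity/two-parts bookkeeping descends just as in the $\ST$ proof, and (c) descends), so $\lambda/2 \in \SPa(t)$ by induction and $\lambda \in \SPa(2t)$. If $n = 2t+1$: $\lambda$ has at least one odd part. By (c), $1$ is a part of $\lambda$ iff valuation $0$ has multiplicity $\geq 2$; combined with (a) and (b), valuation $0$ has multiplicity exactly one or exactly three. If multiplicity one, the odd part is unique and $>1$ (if it were $1$, by (c) it would need the top valuation, but multiplicity one is consistent with (a), yet (c) says $2^{v_1} \notin \lambda$, and if $v_1 = 0$ then $1 = 2^{v_1} \notin \lambda$, contradiction) — so $\mu := $ partition obtained by subtracting $2$ from the unique odd part gives $\mu \vdash 2t-1$ satisfying (a), (b), (c), whence $\mu \in \SPa(2t-1)$ and $\lambda = \mu^{o+2} \in \SPa(2t+1)$. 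If multiplicity three, by (b) two of those three parts equal $1$; the third, call it $c$, is odd; if $c > 1$ we'd have three odd parts but only valuation-$0$ issues — wait, $c$ odd with $c>1$ still has valuation $0$, so all three of the valuation-$0$ parts are $1, 1, c$; but then subtracting — hmm, actually I claim $c = 1$ must hold, because otherwise set $\mu = \lambda \setminus (1,1,1)$ which has only even parts... but $\mu$ still contains $c$? No: if $c > 1$ and odd, $c$ has valuation $0$, contradicting that $\mu$ (= $\lambda$ minus three $1$'s, which still has $c$ as a part) has only even parts. So we need $c = 1$, i.e. $\lambda$ has exactly three parts equal to $1$ and no other odd part. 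Hmm, but wait — could valuation $0$ have multiplicity three with parts $1,1,1$ AND there be yet more odd parts? No: multiplicity three means exactly three parts of valuation $0$. So $\lambda = \mu \sqcup (1,1,1)$ with $\mu$ having only even parts; $\mu \vdash 2t - 2$ satisfies (a), (b), (c); by induction $\mu \in \SPa(2t-2)$, so $\lambda \in \SPa(2t+1)$.

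The main obstacle I anticipate is the careful case analysis in the converse direction for odd $n$ — specifically, correctly pinning down that when valuation $0$ has multiplicity three the three parts must all equal $1$ (ruling out a configuration $1,1,c$ with $c$ an odd number $>1$), and making sure the parity argument $n = |\nu| + (\text{odd}) + 2$ cleanly excludes stray odd parts. This is the analogue of Remark~\ref{R1} for the semi-tribonacci case, and it may be cleanest to first record a short lemma (or remark, proved by an easy induction on the recursive definition) stating: \emph{if $\lambda \in \SPa(n)$ with $n$ odd, then $\lambda$ has exactly one odd part greater than $1$ and no part equal to $1$, or exactly three odd parts of which at least two — in fact all three when there is no odd part $>1$ — equal $1$; if $n$ is even, $\lambda$ has only even parts.} With such a structural fact in hand, both directions of the theorem reduce to the same routine valuation bookkeeping already carried out in the proof of Theorem~\ref{T-st}, together with the extra observation that condition (c) is exactly the condition that makes $\SPa(2^k)$ empty and propagates under doubling, the $o+2$ move, and the $\sqcup(1,1,1)$ move.
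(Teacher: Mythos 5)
Your overall strategy (double induction mirroring the proof of Theorem~\ref{T-st}) is exactly what the paper intends --- its own proof is omitted with a pointer to that argument --- and your forward direction and your even case of the converse are sound. But there is a genuine error in the converse direction for odd $n$. In the subcase where valuation $0$ has multiplicity three, you claim that the three valuation-$0$ parts must all equal $1$ (``I claim $c=1$ must hold''), deducing this from the fact that otherwise $\lambda\setminus(1,1,1)$ would fail to have only even parts. That reasoning only shows that the $\sqcup(1,1,1)$ branch is unavailable when $c>1$; it does not show $c=1$. Indeed $c>1$ does occur: $(6,5,1,1)\in\SPa(13)$ satisfies (a), (b), (c) with valuation-$0$ parts $5,1,1$. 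Your argument as written either fails to place such partitions into $\SPa(n)$ or wrongly concludes they violate the conditions, so an entire family of partitions is unaccounted for in the converse.

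The fix is the analogue of case (iii) in the proof of Theorem~\ref{T-st}: split on $m_\lambda(1)\in\{0,2,3\}$ rather than on the multiplicity of valuation $0$. If $m_\lambda(1)=0$, the unique odd part exceeds $1$ (and in fact $\lambda=(n)$, since a valuation-$0$ class of multiplicity one at index $j>1$ would violate (b)); subtract $2$ from it and use the $^{o+2}$ branch. If $m_\lambda(1)=2$, the valuation-$0$ parts are $1,1,c$ with $c>1$ odd; subtract $2$ from $c$, check (a), (b), (c) persist (they do, since the valuation multiset is unchanged and two parts still equal $1$), and again use the $^{o+2}$ branch. Only when $m_\lambda(1)=3$ do you remove $(1,1,1)$ and invoke the $\sqcup(1,1,1)$ branch. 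Your concluding structural lemma is correct as stated and compatible with this repair, but the case analysis in the body must be reorganized along these lines; note also that, unlike what your two-way split suggests, the $^{o+2}$ branch of the recursion does produce partitions whose valuation-$0$ class has multiplicity three.
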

\begin{proof} The theorem is proved by induction. The argument is similar to that of Theorem \ref{T-st} and we omit the details.
\end{proof}

\begin{corollary} \label{C_spa} Let $\SF'(n)$ be the subset of semi-Fibonacci partitions with no part equal to a power of $2$, or equivalently, the set of partitions of $n$ into parts with different $2$-adic valuations and such that no part is a power of $2$. Then $|\SPa(n)|=|\SF'(n)|$ for all $n\geq 1$.
\end{corollary}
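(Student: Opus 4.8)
\textbf{Proof proposal for Corollary \ref{C_spa}.}

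The plan is to exhibit an explicit bijection between $\SPa(n)$ and $\SF'(n)$ by comparing the non-recursive descriptions of the two sets, and the natural candidate is the map $\rep_2^{-1}\circ\rep_2$ suitably interpreted --- more precisely, I would pass through odd binary partitions. Recall from Theorem \ref{SP-OB3} and its combinatorial proof (analogous to that of Corollary \ref{st}) that $\rep_2$ maps $\SPa(n)$ into $\OB_\R(n)$ by replacing each part with valuation $k$ by the appropriate number of copies of $2^k$; conversely, merging all parts equal to a given $2^k$ in an element of $\OB_\R(n)$ recovers a partition in $\SPa(n)$. On the other side, by the non-recursive description of $\SF(n)$ quoted in the introduction (\cite[Theorem 4]{AMN}), $\SF(n)$ is the set of partitions of $n$ with distinct $2$-adic valuations of parts, so $\rep_2$ restricts to the bijection between $\SF(n)$ and $\OB(n)$ described in Section \ref{prelim}, and $\SF'(n)$ --- the partitions in $\SF(n)$ with no part a power of $2$ --- maps under $\rep_2$ onto exactly those odd binary partitions in which no part $2^k$ occurs with multiplicity exactly $1$, i.e. every part has multiplicity at least $3$. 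But that is precisely $\OB_\R(n)$.

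So the key steps, in order, are: (1) recall $\rep_2\colon \SF(n)\to \OB(n)$ is a bijection (Section \ref{prelim}); (2) show $\lambda\in\SF(n)$ has a part equal to $2^k$ if and only if $\rep_2(\lambda)$ has $2^k$ with multiplicity exactly $1$ --- this is immediate from the definition of $\rep_2$, since a part $m$ of $\lambda$ with $\val_2(m)=k$ becomes $m/2^k$ copies of $2^k$, and $m/2^k$ is odd, so it equals $1$ exactly when $m=2^k$, while distinctness of valuations in $\lambda$ guarantees no interference between parts of different valuations; (3) conclude $\rep_2$ restricts to a bijection $\SF'(n)\to\OB_\R(n)$; (4) combine with Theorem \ref{SP-OB3}, $|\SPa(n)|=|\OB_\R(n)|$, to get $|\SPa(n)|=|\SF'(n)|$.

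I would also note the alternative, fully structural argument that avoids citing Theorem \ref{SP-OB3}: compare Theorem \ref{spa-nonrec} with the description of $\SF'(n)$ directly. A partition $\lambda$ is in $\SF'(n)$ iff its parts have distinct valuations and none is a power of $2$. Such a $\lambda$ is generally \emph{not} in $\SPa(n)$, so the correspondence is not the identity; rather, given $\lambda\in\SF'(n)$, for each part $m$ with $\val_2(m)=k$ one writes $m=2^k\cdot q$ with $q\geq 3$ odd and replaces $m$ by the three parts $2^k(q-2),\,2^k,\,2^k$, obtaining a partition satisfying conditions (a), (b), (c) of Theorem \ref{spa-nonrec} (condition (c) holds because the \emph{largest} valuation originally came from a non-power-of-$2$ part, hence $q\geq 3$ and the top ``surviving'' part $2^k(q-2)$ is not $2^k$; conditions (a) and (b) are built into the construction), and this is reversed by merging, within each valuation class, the unique large part with the two copies of $2^k$. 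The main obstacle in writing either version cleanly is bookkeeping around condition (c) / the largest valuation: one must check that the largest-valuation part of an element of $\SF'(n)$ really does survive the transformation as a part that is not itself a power of $2$, equivalently that $2^{v_1(\lambda)}$ never becomes a part. Given the machinery already in place (Theorem \ref{spa-nonrec}, \cite[Theorem 4]{AMN}, and the bijectivity of $\rep_2$ on $\SF(n)$), I expect the argument via step (2) above to be the shortest and I would present that, remarking that it also yields the explicit bijection $\SPa(n)\leftrightarrow\SF'(n)$ as $\rep_2^{-1}\circ\rep_2$.
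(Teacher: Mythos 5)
Your main argument is exactly the paper's proof: the paper disposes of the corollary by observing that $\rep_2$ restricts to a bijection $\SF'(n)\to\OB_{\mathrm R}(n)$ (with inverse given by merging equal parts) and then invokes $|\SPa(n)|=|\OB_{\mathrm R}(n)|$, which is precisely your steps (1)--(4), with your step (2) correctly supplying the "easy to see" detail. Your alternative structural map (replacing $2^k q$ by $2^k(q-2),2^k,2^k$) is essentially the bijection $\varphi$ the paper introduces immediately afterwards in its combinatorial proof of Theorem \ref{SP-OB3}, so nothing here diverges from the paper.
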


\begin{proof}
It is easy to see that $\rep_2$ is a bijection from $\SF'(n)$ to $\OB_{\mathrm{R}}(n)$; for the inverse, merge equal parts into a single part.
\end{proof}

\begin{proof}[Combinatorial proof of Theorem \ref{SP-OB3}]
Corollary \ref{C_spa} leads to a combinatorial proof of Theorem \ref{SP-OB3}. We define a bijection $\varphi:\SF'(n)\to \SPa(n)$ as follows. If $\lambda \in \SF'(n)$, replace each part $2^k m$, $m>1$ odd, of $\lambda$ with $k\neq v_1(\lambda)$ by parts $2^k(m-2), 2^k, 2^k$. We obtain a partition in $\SPa(n)$. The inverse, $\varphi^{-1}$, takes a partition $\mu\in \SPa(n)$ and merges all equal parts into a single part to obtain a partition in $\SF'(n)$.

Then, by Corollary \ref{C_spa}, the map $\rep_2\circ \varphi^{-1}:\SPa(n)\to\OB_{\mathrm{R}}(n)$ is a bijection. 
\end{proof}
Next, we define 
a modified semi-Padovan sequence. 
\begin{definition}The modified semi-Padovan sequence,$\{\spa'(n)\}_{n\geq 0}$, is defined recursively by $\spa'(0)=0$, $\spa'(1)=1$, and for $n\geq 1$, \begin{align*}\spa'(2n)&=\spa'(n),\\ \spa'(2n+1)&=\spa'(2n-1)+\spa'(2n-2).\end{align*} \end{definition}
Note that the only difference between the definitions of $\spa(n)$ and $\spa'(n)$ is that the initial conditions are switched: $\spa(0)=1$, $\spa(1)=0$, while $\spa'(0)=0$, $\spa'(1)=1$.

The following table shows the values of the modified semi-Padovan sequence $\spa'(n)$ for $n = 0, 1, \ldots, 12$.
$$\begin{array}
{c|ccccccccccccc}
n & 0 & 1 & 2 & 3 & 4 & 5 & 6 & 7 & 8 & 9 & 10 & 11 & 12 \\
\hline
\spa'(n) & 0 & 1 & 1 & 1 & 1 & 2 & 1 & 3 & 1 & 4 & 2 & 5 & 1
\end{array} $$

This is the sequence $\{f(n)\}_{n\geq 0}$ of Section \ref{sec_gen} with $a_0=0$, $a_1=1$, and $c_1=0$, and $c_2=c_3=1$. Thus, we are in Case 1 of Section \ref{sec_gen} and \eqref{big} becomes 
$$F(x) =\sum_{i=0}^\infty \frac{x^{2^i}}{1-x^{2^{i+1}}} \prod_{r=0}^{i-1} \left(1+\frac{x^{3\cdot 2^r}}{1-x^{2^{r+1}}}\right).$$

Then, $F(x)$ is the generating function for $|\OB_{\mathrm{R}'}(n)|$, where $\OB_{\mathrm{R}'}(n)$ is the set of odd binary partitions of $n$ such that only the largest part can have multiplicity one. For example, \begin{align*}\OB_{\mathrm{R}'}(9)=\{& 4+1+1+1+1+1,\,  2+1+1+1+1+1+1+1, \\ &  2+2+2+1+1+1, \, 1+1+1+1+1+1+1+1+1\}.\end{align*}

\begin{definition}
The modified semi-Padovan partitions are defined recursively by \\ $\SPa'(0)=\emptyset$, $\SPa'(1)=\{(1)\}$, and for $n\geq 1$, \begin{align*}\SPa'(2n)& = 2 \SPa'(n),\\ \SPa'(2n+1) & = \SPa'(2n-1)^{o+2} \bigcup \SPa'(2n-2) \sqcup (1,1,1).\end{align*}\end{definition}
Again the only differences in the definitions of $\SPa(n)$ and $\SPa'(n)$ are in the initial conditions.

 \begin{example} The table below shows the partitions in $\SPa'(n)$ for $n=1,..,12$.
$$\begin{array}{c|c} n& \SPa'(n) \\ \hline 1 & 1 \\ 2 & 2\\ 3 & 3 \\ 4& 4 \\ 5 & 5,2+1+1+1 \\ 6&6 \\ 7 & 7, 3+2+1+1,4+1+1+1 \\ 8& 8 \\9 & 9, 4+3+1+1,5+2+1+1,6+1+1+1 \\ 10& 10, 4+2+2+2 \\ 11& 11, 5+4+1+1, 6+3+1+1, 7+2+1+1, 8+1+1+1 \\ 12& 12 \end{array} $$
 \end{example}

The same argument as in the case of $\SPa(n)$ shows that $|\SPa'(n)|=\spa'(n)$ and proves the next theorem.
\begin{theorem}\label{SPA'-OB'3} For $n\geq 1$, $|\SPa'(n)|=|\OB_{\mathrm{R}'}(n)|$.
\end{theorem}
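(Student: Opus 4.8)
The plan is to imitate the proof of Theorem \ref{SP-OB3}, with the one structural change that the ``largest part'' of a partition in $\OB_{\mathrm{R}'}(n)$ plays the distinguished role that the overall weight parity plays for $\SPa(n)$. The first step is to verify $|\SPa'(n)|=\spa'(n)$. This follows by induction exactly as for $\SPa(n)$: if $n$ is odd, partitions in $\SPa'(n)$ have at least one odd part, so partitions in $\SPa'(2n-1)^{o+2}$ have a unique odd part and it is greater than $1$, while partitions in $\SPa'(2n-2)\sqcup(1,1,1)$ have exactly three odd parts, all equal to $1$; hence the union defining $\SPa'(2n+1)$ is disjoint and its cardinality matches the recurrence for $\spa'(2n+1)$, while the even case $|\SPa'(2n)|=|\SPa'(n)|$ is immediate since $\mu\mapsto 2\mu$ is a bijection. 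The initial values $\SPa'(0)=\emptyset$, $\SPa'(1)=\{(1)\}$ match $\spa'(0)=0$, $\spa'(1)=1$.

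Next I would record the (easy, omitted) non-recursive description of $\SPa'(n)$, analogous to Theorem \ref{spa-nonrec}: $\SPa'(n)$ consists of the partitions $\lambda\vdash n$ such that $v_1(\lambda)$ has multiplicity one in $v(\lambda)$, and for $1<j\le\ell(\lambda)$ the valuation $v_j(\lambda)$ has multiplicity three with at least two parts equal to $2^{v_j(\lambda)}$ --- i.e.\ the same conditions (a),(b) as in Theorem \ref{spa-nonrec} but \emph{without} condition (c) forbidding $2^{v_1(\lambda)}$ as a part. (The presence of a part equal to a power of $2$, namely the largest part, is precisely what corresponds to allowing the largest part of an element of $\OB_{\mathrm{R}'}(n)$ to have multiplicity one.) This is proved by the same two-sided induction as Theorem \ref{T-st}/Theorem \ref{spa-nonrec}, so I would state it and refer to that argument rather than repeating it.

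With that description in hand, the combinatorial bijection is built just as in the proof of Theorem \ref{SP-OB3}. Let $\SF''(n)$ be the set of partitions of $n$ into parts with distinct $2$-adic valuations (with no further restriction); equivalently the semi-Fibonacci partitions $\SF(n)$. The map $\rep_2$ restricts to a bijection $\SF(n)\to\OB_{\mathrm{R}'}(n)$: a part $2^k m$ with $m$ odd becomes $m$ copies of $2^k$, so the image has odd multiplicities everywhere, and the only valuation that can have $m=1$ (hence multiplicity one in the image) is $v_1(\lambda)$, which is exactly the largest part; the inverse merges equal parts. Then define $\varphi:\SF(n)\to\SPa'(n)$ by leaving the part with valuation $v_1(\lambda)$ untouched and replacing every other part $2^k m$ ($m>1$ odd) by $2^k(m-2),2^k,2^k$; by the non-recursive description this lands in $\SPa'(n)$, and $\varphi^{-1}$ merges equal parts. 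Composing, $\rep_2\circ\varphi^{-1}:\SPa'(n)\to\OB_{\mathrm{R}'}(n)$ is the desired bijection, giving $|\SPa'(n)|=|\OB_{\mathrm{R}'}(n)|$.

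The only genuine obstacle is the bookkeeping around the \emph{largest} part: one must check that when $m=1$ for the top valuation, $\varphi$ correctly does nothing (so the largest part survives as a lone power of $2$), and that a part equal to $2^{v_1(\lambda)}$ cannot be confused with the pairs $2^k,2^k$ created from smaller valuations --- this is guaranteed because those pairs have multiplicity-three valuations whereas $v_1(\lambda)$ has multiplicity one, so $\varphi^{-1}$ is unambiguous. Everything else is routine and parallels Section \ref{sec_spa} verbatim.
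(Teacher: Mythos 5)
Your first paragraph (the count $|\SPa'(n)|=\spa'(n)$) and your statement of the non-recursive description of $\SPa'(n)$ both match the paper. The gap is in the bijection. Your claim that $\rep_2$ restricts to a bijection $\SF(n)\to\OB_{\mathrm{R}'}(n)$ is false: nothing in the definition of $\SF(n)$ (distinct $2$-adic valuations, no further restriction) prevents a part \emph{other} than the one of maximal valuation from being a power of $2$, i.e.\ from having $m=1$. For example $(4,1)\in\SF(5)$, but $\rep_2(4,1)=(4,1)$ has the non-largest part $1$ with multiplicity one, so it is not in $\OB_{\mathrm{R}'}(5)$. The cardinalities already disagree: $\sf(5)=3$ while $\spa'(5)=|\OB_{\mathrm{R}'}(5)|=2$. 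The same defect breaks your map $\varphi$: on $(4,1)$ the rule ``replace $2^km$ ($m>1$ odd) by $2^k(m-2),2^k,2^k$'' does not apply to the part $1$, and the untouched output violates condition (b) of the non-recursive description (the valuation $0$ has multiplicity one but is not $v_1$). Your closing paragraph worries about $m=1$ only at the top valuation; the actual problem is $m=1$ at a non-top valuation, which you implicitly assume cannot occur.

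The fix is to replace $\SF(n)$ by its subset consisting of partitions with distinct $2$-adic valuations in which no part other than the one of largest valuation is a power of $2$; on that subset your $\rep_2$ and $\varphi$ arguments go through verbatim (this is the exact analogue of Corollary \ref{C_spa}, where $\SF'(n)$ excludes \emph{all} powers of $2$). Alternatively, and this is what the paper actually does, skip the intermediate set entirely: by Theorem \ref{spa'-nonrec}, for $\lambda\in\SPa'(n)$ the parts of valuation $v_j$ ($j>1$) are $2^{v_j}a,2^{v_j},2^{v_j}$ with $a$ odd, so $\rep_2$ sends them to $a+2\geq 3$ copies of $2^{v_j}$, while the unique part of valuation $v_1$ becomes an odd number (possibly one) of copies of the largest power $2^{v_1}$; hence $\rep_2$ itself is a bijection $\SPa'(n)\to\OB_{\mathrm{R}'}(n)$, with inverse merging the copies of the largest part and splitting each smaller block of $m$ copies of $2^k$ into $2^k(m-2),2^k,2^k$. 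Note that your composite $\rep_2\circ\varphi^{-1}$ is literally this map, so the intended endpoint is right even though the route through $\SF(n)$ is not.
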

Next, we give a non-recursive description of the partition in $\SPa'(n)$. This will allow us to prove Theorem \ref{SPA'-OB'3} combinatorially.

\begin{theorem}\label{spa'-nonrec} Let $n>0$. The set $\SPa'(n)$ consists of all partitions $\lambda$ of $n$ satisfying the following two conditions: 
\begin{itemize}\item[(a)] $v_1(\lambda)$ has multiplicity one in $v(\lambda)$; 
\item[(b)] if $1<j\leq \ell(\lambda)$, then $v_j(\lambda)$ has multiplicity three and 
$\lambda$ has at least two parts equal to $2^{v_j(\lambda)}$.
\end{itemize}
\end{theorem}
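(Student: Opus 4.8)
The plan is to prove Theorem~\ref{spa'-nonrec} by induction on $n$, following the template of the proof of Theorem~\ref{T-st}, but adapted to the simpler two-condition setting (the extra condition (c) of Theorem~\ref{spa-nonrec}, forbidding $2^{v_1(\lambda)}$ as a part, is precisely what disappears when the initial conditions change, so it must \emph{not} appear here). First I would verify the base cases by inspection: for $n=1,2$, $\SPa'(n)=\{(n)\}$ consists of a single binary partition, which trivially satisfies (a) and (b) (condition (b) being vacuous). Then I would split into the forward direction (every $\lambda\in\SPa'(n)$ satisfies (a), (b)) and the reverse direction (every partition of $n$ satisfying (a), (b) lies in $\SPa'(n)$), each by strong induction.

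For the forward direction, let $\lambda\in\SPa'(n)$ with $n\geq 3$. If $n=2t$, then $\lambda=2\mu$ for some $\mu\in\SPa'(t)$, and $v(\lambda)=(v_1(\mu)+1,\dots,v_{\ell(\mu)}(\mu)+1)$; since doubling preserves multiplicities of valuations and sends the part $2^{v_j(\mu)}$ to $2^{v_j(\mu)+1}$, conditions (a), (b) transfer from $\mu$ to $\lambda$ by the induction hypothesis. If $n=2t+1$, there are two cases. If $\lambda=\mu\sqcup(1,1,1)$ with $\mu\in\SPa'(2t-2)$, then $\mu$ has only even parts (as $2t-2$ is even), so $v_{\ell(\mu)}(\mu)>0$ and $v(\lambda)=(v_1(\mu),\dots,v_{\ell(\mu)}(\mu),0,0,0)$; the three new parts equal to $1=2^0$ form a valuation-$0$ block of multiplicity exactly three with at least two (in fact all three) parts equal to $2^0$, and the remaining blocks are unchanged, so (a), (b) hold. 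If $\lambda=\mu^{o+2}$ with $\mu\in\SPa'(2t-1)$, then adding $2$ to the largest odd part of $\mu$ changes no $2$-adic valuation, so $v(\lambda)=v(\mu)$ and the conditions carry over directly from the induction hypothesis.

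For the reverse direction, let $\lambda\vdash n$ satisfy (a), (b), with $n\geq 3$. If $n=2t$, then by (b) every valuation-$j$ block with $j>1$ has multiplicity three hence contributes an even sum, and the unique largest-valuation block has valuation $v_1(\lambda)\geq 1$ since $n$ is even (if $v_1(\lambda)=0$ the single part would be odd, forcing $n$ odd); so all parts of $\lambda$ are even, $\lambda/2$ is a partition of $t$, its valuation multiset is $v(\lambda)$ shifted down by one, so it satisfies (a), (b), and by induction $\lambda/2\in\SPa'(t)$, whence $\lambda\in 2\SPa'(t)=\SPa'(2t)$. If $n=2t+1$, then $\lambda$ has odd parts, i.e.\ $v_{\ell(\lambda)}(\lambda)=0$; by (a)--(b) the valuation-$0$ block has multiplicity one or three, and if three then (b) forces at least two of those parts to equal $1$. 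If $m_\lambda(1)$-considerations give a single odd part $2^0 m$ with $m>1$: set $\mu=\lambda^{o-2}$ (subtract $2$ from that unique odd part), so $\mu\vdash 2t-1$, $v(\mu)=v(\lambda)$, $\mu$ satisfies (a), (b), hence by induction $\mu\in\SPa'(2t-1)$ and $\lambda=\mu^{o+2}\in\SPa'(2t+1)$. If instead the odd part is exactly $1$ with multiplicity one, or the valuation-$0$ block has multiplicity three (forcing it to be $(1,1,1)$, since two parts equal $1$ and the third has valuation $0$ and—by the multiplicity-three constraint applied together with the largest part being unique—must also equal $1$), remove $(1,1,1)$ to get $\mu\vdash 2t-2$ satisfying (a), (b), so $\mu\in\SPa'(2t-2)$ and $\lambda=\mu\sqcup(1,1,1)\in\SPa'(2t+1)$.

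The main obstacle I anticipate is the bookkeeping in the odd case of the reverse direction: one must argue cleanly that a valuation-$0$ block of multiplicity three in a partition satisfying (a), (b) is necessarily $(1,1,1)$ rather than, say, $(3,1,1)$ or $(5,1,1)$. This is where condition (b)'s clause ``$\lambda$ has at least two parts equal to $2^{v_j(\lambda)}$'' does real work, combined with the observation that in the $\SPa'$ recursion the only way a valuation-$0$ block of size three is ever created is via $\sqcup(1,1,1)$ (never via $o+2$, which only acts on a pre-existing odd part and never increases the count of odd parts), so the converse must match this—meaning the case split should be organized by whether there is an odd part exceeding $1$ (handled by $o+2$) versus all odd parts equal to $1$ (handled by $\sqcup(1,1,1)$, which then must be exactly three or exactly one such part). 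Once this dichotomy is stated correctly the rest is the same routine valuation-tracking as in Theorem~\ref{T-st}, so as the paper notes for analogous propositions, the full details can reasonably be compressed or omitted.
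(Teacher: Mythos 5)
Your overall strategy (induction mirroring the proof of Theorem \ref{T-st}, with the forward and reverse directions handled separately) is the right one, and the forward direction is essentially correct. But the reverse direction contains a genuine error in the case analysis for odd $n$, centered on your claim that a valuation-$0$ block of multiplicity three in a partition satisfying (a) and (b) is ``necessarily $(1,1,1)$ rather than, say, $(3,1,1)$ or $(5,1,1)$.'' That claim is false, and the paper's own tables refute it: $(3,2,1,1)\in\SPa'(7)$ and $(5,2,1,1)\in\SPa'(9)$ have valuation-$0$ blocks $\{3,1,1\}$ and $\{5,1,1\}$ respectively. Condition (b) only requires \emph{at least two} parts equal to $2^{v_j(\lambda)}$ in a multiplicity-three block; the third part of the valuation-$0$ block may be any odd number. (Condition (a) constrains only the top block, so it cannot force the third part to be $1$, contrary to your parenthetical.) Your supporting observation --- that $o+2$ never \emph{creates} a size-three valuation-$0$ block --- is true but irrelevant: $o+2$ can \emph{modify} an existing block $(1,1,1)$ into $(3,1,1)$, and iterating produces $(m,1,1)$ for any odd $m$. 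As written, your reverse direction routes every multiplicity-three valuation-$0$ block to ``remove $(1,1,1)$,'' which is impossible when the block is $(m,1,1)$ with $m>1$ (there are only two parts equal to $1$) and, even if patched to remove $\{m,1,1\}$, would not produce a partition of $2t-2$. So partitions such as $(4,3,1,1)\in\SPa'(9)$ are not accounted for by your argument.

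The correct dichotomy, matching case (iii) of the proof of Theorem \ref{T-st}, is governed by whether $\lambda$ has an odd part greater than $1$. If it does (this covers both the multiplicity-one valuation-$0$ block, which forces $\ell(\lambda)=1$ and $\lambda=(n)$, and the multiplicity-three block $(m,1,1)$ with $m\geq 3$; note that (b) forbids more than one odd part exceeding $1$), subtract $2$ from that unique odd part to get $\mu\vdash 2t-1$ still satisfying (a) and (b), so that $\lambda=\mu^{o+2}$ with $\mu\in\SPa'(2t-1)$ by induction. If instead all odd parts equal $1$, the valuation-$0$ block cannot have multiplicity one (a single part $1$ alongside other, even parts would violate (b), while $\ell(\lambda)=1$ forces $n=1$), hence it is exactly $(1,1,1)$; remove it to get $\mu\vdash 2t-2$. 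Your separate case ``the odd part is exactly $1$ with multiplicity one'' is vacuous for $n\geq 3$, and in any event cannot be handled by removing $(1,1,1)$. With this corrected case split (and a one-line check in the forward $o+2$ case that two parts equal to $1$ survive when the largest odd part happens to be $1$), the induction goes through as in Theorem \ref{T-st}.
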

\begin{proof}The theorem is proved by induction. The argument is similar to that of Theorem \ref{T-st} and we omit the details. 
\end{proof}

\begin{proof}[Combinatorial proof of Theorem \ref{SPA'-OB'3}] We define a bijection $\psi:\SPa'(n)\to \OB_{\mathrm{R}'}(n)$ as follows. If $\lambda \in \SPa'(n)$, we define $\psi(\lambda)=\rep_2(\lambda)\in \OB_{\mathrm{R}'}(n)$. The inverse, $\psi{-1}$, takes a partition $\mu\in \OB_{\mathrm{R}'}(n)$ and merges all occurrences of the largest part into a single part, and for each $2^k<\mu_1$ it replaces all $m_\mu(2^k)$ parts equal to $2^k$ by the three parts: $2^k(m_\mu(2^k)-2), 2^k, 2^k$. Then $\psi^{-1}(\mu)\in \SPa'(n)$.
\end{proof}

\begin{remark} The conditions (a) and (b) in Theorem \ref{spa'-nonrec} are the same as in Theorem \ref{spa-nonrec}. The partitions in $\SPa(n)$ are precisely the partitions in $\SPa'(n)$ that do not have the part with the largest $2$-adic valuation equal to a power of $2$. 
Moreover, Theorems \ref{T-st} and \ref{spa'-nonrec} show that $\SPa'(n)\subseteq \ST(n)$ for every $n\geq 1$.
Thus, for $n\geq 1$, we have $$\SPa(n)\subseteq \SPa'(n)\subseteq \ST(n).$$
 It is straightforward to see that for $n\geq 1$ we have $\OB_{\mathrm{R}}(n)\subseteq \OB_{\mathrm{R}'}(n)
\subseteq \OBST(n)$. \end{remark}

\subsection{Parity Results for $\spa(n)$ and $\spa'(n)$}

We begin by showing that the parity of $\spa(n)$ is completely determined by residues modulo $7$.
This result is similar to the result of \cite[Theorem 2]{A19} for the semi-Fibonacci sequence, namely: $\sf(n)$ is even if and only if $n\equiv 0 \pmod 3$. 

We say that two finite sequences are congruent modulo $2$ if and only if the corresponding terms are congruent.

\begin{theorem}\label{cong_spa} 
For $n\geq 0$, $\spa(n)\equiv 0$ if and only if $n\equiv 1, 2, 4 \pmod 7$.
\end{theorem}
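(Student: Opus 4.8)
The plan is to prove the stronger statement that the residue of $\spa(n)$ modulo $2$ is determined by $n$ modulo $7$, by tracking the parity of the ``state vector'' $(\spa(2n-2),\spa(2n-1),\spa(2n))$ (or an equivalent triple) as $n$ runs through residues. Concretely, I would first unwind the recurrence so that $\spa$ at \emph{odd} arguments is expressed in terms of $\spa$ at smaller arguments via a fixed finite-state rule, using $\spa(2n)=\spa(n)$ to reduce even indices. The key observation is that the parity of $\spa$ is governed by the binary expansion of $n$: writing $n$ in binary and reading digits from the most significant end, the pair $(\spa(\lfloor n/2\rfloor),\ \text{something})$ evolves by a substitution depending only on the current bit, and this substitution, when passed to parities, has a small transition monoid.

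The concrete steps would be: (1) verify the claim by direct computation for $n=0,1,\dots,13$ (equivalently all residues mod $7$, with a couple of extra terms to see periodicity onset), establishing the base case; (2) set up the inductive step by expanding $\spa(2n+1)=\spa(2n-1)+\spa(2n-2)=\spa(2n-1)+\spa(n-1)$ and, iterating once more, $\spa(2n-1)=\spa(2n-3)+\spa(2n-4)=\spa(2n-3)+\spa(n-2)$, so that modulo $2$ one gets a relation among $\spa$ evaluated at arguments whose residues mod $7$ are shifts of $n$'s; (3) check that the induced map on residues mod $7$ is compatible, i.e.\ that if $n\equiv r\pmod 7$ then the residues appearing on the right-hand side are exactly the predicted ones and the parities sum as claimed. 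Essentially this is a finite verification once the recursion is written in the right finite-state form: there are only $7$ residue classes and the even/odd split of $n$ is determined by $n\bmod 2$, so one checks $14$ cases (residue mod $7$ times parity), each a short parity computation using the table and the recurrence.

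The cleanest way to organize step (3) is to introduce $T(n):=(\spa(n)\bmod 2,\ \spa'(n)\bmod 2)$ or, better, to prove simultaneously a statement about the pair $(\spa(2n),\spa(2n+1))\bmod 2$ as a function of $n\bmod 7$; the recurrences $\spa(2n)=\spa(n)$ and $\spa(2n+1)=\spa(2n-1)+\spa(2n-2)$ then express $(\spa(2n+2),\spa(2n+3))$ in terms of $(\spa(2n),\spa(2n+1))$ and $(\spa(2n-2),\spa(2n-1))$, i.e.\ a linear recursion over $\mathbb F_2$ with constant coefficients in the variable $n$, whose characteristic behavior has period dividing $7$ (since $x^3\equiv\ ?\ $ — one checks the order of the companion matrix over $\mathbb F_2$ is $7$). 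I would verify that this companion matrix has multiplicative order $7$ in $GL_3(\mathbb F_2)$, which pins the period at $7$, and then read off from the base-case table which three residues give $\spa(n)\equiv 0$; these turn out to be $1,2,4\pmod 7$.

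The main obstacle I anticipate is bookkeeping the even/odd reduction cleanly: because $\spa(2n)=\spa(n)$ folds the index, the ``linear recursion in $n$'' picture is only literally valid along arithmetic progressions with fixed $2$-adic valuation, so a naive constant-coefficient argument over $\mathbb F_2$ needs care when $n$ itself is even. The fix is to note that when $n$ is even, $\spa(2n+1)=\spa(2n-1)+\spa(n-1)$ and $\spa(2n-1)=\spa(2(2n-1)) /\!\!/$ wait — rather, one simply does the induction on $n$ directly (not on a progression), checking the two parity classes of $n$ separately: for $n$ odd the relevant smaller arguments are again odd-ish and the mod-$7$ shift is transparent, while for $n$ even one uses $\spa(2n)=\spa(n)$ to drop to $n<2n$ and invokes the inductive hypothesis for $n\bmod 7$. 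Since halving sends residues mod $7$ to residues mod $7$ bijectively on the even values (multiplication by $4\equiv 2^{-1}$), this closes. So the real content is: (i) order of the companion matrix is $7$, and (ii) the base-case table identifies $\{1,2,4\}$ as the zero set — both finite checks.
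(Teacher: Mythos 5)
Your proposal is correct and, once you strip away the companion-matrix digression (which you rightly abandon), it is essentially the paper's argument: strong induction on $n$, using $\spa(2n)=\spa(n)$ for even indices (the zero set $\{1,2,4\}$ is closed under doubling mod $7$) and the recurrence $\spa(2n+1)=\spa(2n-1)+\spa(2n-2)$ for odd indices, reducing everything to a finite check of the pattern $(1,0,0,1,0,1,1)$ over the residues mod $7$. The paper merely packages the same induction in blocks of $14$ consecutive values rather than splitting on the parity of $n$.
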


\begin{proof} We say that two finite sequences are congruent modulo $2$ if and only if the corresponding terms are congruent. We use complete induction to prove that for all $n\geq 0$, $\{\spa(7n+k)\}_{k=0,1,\ldots,6} \equiv (1,0,0,1,0,1,1).$
We work with $14$ consecutive values at a time. For the base cases, the first seven values of $\spa(n)$, starting with $n=0$, are $y:=(1,0,0,1,0,1,1)$ and the next seven values are $(1,0,2,1,2,1,3)\equiv y$, as expected. 

Let $M>0$ be an integer. Assume that, for all $m<M$ and $k=0,1,\ldots,13$, $\spa(14m+k)$ is even if and only if $k=1,2,4,8,9,11$. Next, we calculate $S:=\{\spa(14M+k)\}_{k=0,1,\ldots,13}$.

For the seven odd arguments, we use the recurrences and induction one value at a time to find the parities. The first value is \begin{align*}\spa(14M+1)& =\spa(14M-1)+\spa(14M-2)\\ & =\spa(14(M-1)+13)+\spa(14(M-1)+12) \\ & \equiv 1+1\equiv 0.\end{align*} The other six are similar; we get $$\{\spa(14M+2j-1)\}_{j=1,2,\ldots,7} \equiv (0,1,1,1,0,0,1) =: x.$$ 

The seven even arguments are easier; we use induction once for all the values:
$$\{\spa(14M+2j)\}_{j=0,1,\ldots,6}=\{\spa(7M+j)\}_{j=0,1,\ldots,6}\equiv y.$$
Interleaving $x$ and $y$ gives
 \begin{align*}S\equiv(1,0,0,1,0,1,1,1,0,0,1,0,1,1) \equiv(y,y),\end{align*} as required. 
\end{proof}
Theorem \ref{cong_spa} leads to the next two corollaries. 

\begin{corollary}\label{cong_spa_2}
For all $n\geq 0$, 
\begin{align}\label{spacong3} \spa(2n+1)+\spa(4n+1)+\spa(8n+1)& \equiv 0,\\ \label{spacong1}\spa(4n+1)+\spa(4n+2)+\spa(4n+3)+\spa(4n+6)& \equiv 0. \end{align}

\end{corollary}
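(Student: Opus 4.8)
The plan is to obtain both congruences as immediate consequences of Theorem~\ref{cong_spa}, with no further induction needed. Theorem~\ref{cong_spa} says that $\spa(m)$ is even exactly when $m\equiv 1,2,4\pmod 7$, equivalently that $\spa(m)\equiv 1$ exactly when $m\equiv 0,3,5,6\pmod 7$. Every argument appearing in \eqref{spacong3} and \eqref{spacong1} is a linear polynomial in $n$ whose leading coefficient is a unit modulo $7$ (indeed $8n+1\equiv n+1\pmod 7$), so its residue modulo $7$ is determined by $n\bmod 7$. Hence it suffices to verify each of \eqref{spacong3} and \eqref{spacong1} separately for $n$ in each of the seven residue classes modulo $7$.

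For \eqref{spacong3} I would make a small table: for $n\equiv r\pmod 7$, $r=0,1,\dots,6$, list the residues modulo $7$ of $2n+1$, of $4n+1$, and of $8n+1\equiv n+1$, then record for each whether it lies in $\{0,3,5,6\}$ (parity $1$) or in $\{1,2,4\}$ (parity $0$) using Theorem~\ref{cong_spa}. The outcome is that when $r=0$ all three arguments are $\equiv 1\pmod 7$, contributing $0+0+0$, and for each $r\in\{1,\dots,6\}$ exactly two of the three arguments fall in $\{0,3,5,6\}$, so $\spa(2n+1)+\spa(4n+1)+\spa(8n+1)\equiv 2\equiv 0$. This establishes \eqref{spacong3}.

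For \eqref{spacong1} I would run the identical check with the four arguments $4n+1,4n+2,4n+3,4n+6$, whose residues modulo $7$ are, respectively, $4r+1,4r+2,4r+3,4r+6$. Comparing against $\{0,3,5,6\}$ one finds that the number of the four that contribute parity $1$ is $4$ when $r\equiv 1\pmod 7$ and is $2$ for the remaining six residues; in every case this count is even, so the left-hand side of \eqref{spacong1} is $\equiv 0$. This completes the proof.

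There is essentially no obstacle here: all the real content sits in Theorem~\ref{cong_spa}, and Corollary~\ref{cong_spa_2} is a finite bookkeeping verification. The only points requiring a moment's care are the reduction $8n+1\equiv n+1\pmod 7$ and the remark that the coefficients $2,4,8$ are units modulo $7$, which is what makes the seven-case check over residues of $n$ exhaustive.
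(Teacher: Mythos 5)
Your proof is correct and, for \eqref{spacong3}, is exactly the paper's argument: reduce the three arguments modulo $7$ (using $8n+1\equiv n+1$), tabulate over the seven residues of $n$, and apply Theorem~\ref{cong_spa}; your parity counts ($0$ odd terms for $r=0$, exactly $2$ otherwise) check out. For \eqref{spacong1} the paper omits the proof and suggests combining recurrences with Theorem~\ref{cong_spa}, whereas you run the same seven-case residue table on $4n+1,4n+2,4n+3,4n+6$ (giving $4$ odd terms for $r\equiv 1$ and $2$ otherwise), which is a perfectly valid and arguably more uniform way to finish.
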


\begin{proof} We prove \eqref{spacong3} and omit the  proof of \eqref{spacong1} which uses recurrences and Theorem \ref{cong_spa}. 

Let $n\geq 0$ and write $n=7m+r$ with $0\leq r\leq 6$. Then, \begin{align*}\spa(2n+1)& + \spa(4n+1)+\spa(8n+1)\\ & =\spa(14m+2r+1)+\spa(28m+4r+1)+\spa(7(8m+r)+r+1). \end{align*} The table below shows the values of $2r+1$, $4r+1$ and $r+1$ modulo $7$ for the corresponding values of $r$. 
 
$$\begin{array}{c|c|c|c} r & 2r+1\!\!\!\! \pmod 7 & 4r+1\!\!\!\! \pmod 7 & r+1\!\!\!\! \pmod 7\\ \hline 0 & 1& 1& 1\\ 1& 3& 5& 2\\ 2& 5& 2& 3\\ 3 & 0 & 6 & 4 \\ 4& 2& 3& 5 \\ 5 & 4& 0& 6\\ 6 & 6 & 4& 0
\end{array}$$ Using Theorem \ref{cong_spa} in each case completes the proof of \eqref{spacong3}. 
\end{proof}

\begin{corollary}\label{cong_span} For $n\geq 0$, \begin{equation*} \spa(8n+7)\equiv \spa(n).\end{equation*}
\end{corollary}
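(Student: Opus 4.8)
The plan is to prove $\spa(8n+7) \equiv \spa(n)$ by reducing both sides to residues modulo $7$, using Theorem \ref{cong_spa} which already tells us that $\spa(m)$ is even exactly when $m \equiv 1, 2, 4 \pmod 7$. So it suffices to show that $8n+7$ and $n$ lie in the same "even" class or the same "odd" class of residues mod $7$; equivalently, that $8n + 7 \equiv 1, 2, 4 \pmod 7$ if and only if $n \equiv 1, 2, 4 \pmod 7$.

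First I would observe that $8 \equiv 1 \pmod 7$ and $7 \equiv 0 \pmod 7$, so $8n + 7 \equiv n \pmod 7$. Hence $8n+7$ and $n$ have exactly the same residue modulo $7$, and in particular $8n+7 \equiv 1, 2, 4 \pmod 7$ if and only if $n \equiv 1, 2, 4 \pmod 7$. Applying Theorem \ref{cong_spa} to both $8n+7$ and $n$ then gives that $\spa(8n+7)$ is even if and only if $\spa(n)$ is even, which is precisely the congruence $\spa(8n+7) \equiv \spa(n)$ modulo $2$.

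I would write this up as a short direct argument rather than an induction, since Theorem \ref{cong_spa} does all the heavy lifting. The one thing to be careful about is the edge behavior at small $n$ (e.g.\ $n = 0$): there $8n + 7 = 7 \equiv 0 \pmod 7$ and $n = 0 \equiv 0 \pmod 7$, so both $\spa(7)$ and $\spa(0)$ should be odd, and indeed the table gives $\spa(0) = 1$ and $\spa(7) = 1$, consistent with the claim. So no separate base case is needed. There is essentially no obstacle here: the main (and only) step is the modular arithmetic identity $8n + 7 \equiv n \pmod 7$, after which the result is immediate from Theorem \ref{cong_spa}.

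\begin{proof}
Since $8 \equiv 1 \pmod 7$ and $7 \equiv 0 \pmod 7$, we have $8n + 7 \equiv n \pmod 7$ for every $n \geq 0$. In particular, $8n+7 \equiv 1, 2, 4 \pmod 7$ if and only if $n \equiv 1, 2, 4 \pmod 7$. By Theorem \ref{cong_spa}, $\spa(8n+7)$ is even if and only if $8n+7 \equiv 1, 2, 4 \pmod 7$, and $\spa(n)$ is even if and only if $n \equiv 1, 2, 4 \pmod 7$. Combining these, $\spa(8n+7)$ is even precisely when $\spa(n)$ is even, i.e.\ $\spa(8n+7) \equiv \spa(n)$.
\end{proof}
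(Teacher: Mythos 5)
Your proof is correct and is essentially identical to the paper's: the paper also observes that $8n+7\equiv n\pmod 7$ and invokes Theorem \ref{cong_spa}, which determines the parity of $\spa(m)$ solely from $m$ modulo $7$. Your write-up just spells out the "even iff $m\equiv 1,2,4\pmod 7$" step more explicitly.
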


\begin{proof}Since $8n+7\equiv n\pmod 7$, the statement follows from Theorem \ref{cong_spa}. 
\end{proof}

Although we have not found a companion to Theorem \ref{cong_spa} for $\spa'(n)$, there is one for the first congruence of Corollary \ref{cong_spa_2}. 

\begin{proposition}\label{cong_spa'_2} For $n\geq 0$, $\spa'(2n+1)+\spa'(4n+1)+\spa'(8n+1)\equiv 1$. 
\end{proposition}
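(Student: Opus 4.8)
\textbf{Plan of proof for Proposition \ref{cong_spa'_2}.} The plan is to mirror the proof of Theorem \ref{cong_dt}, running a complete induction on $n$ directly on the quantity $T(n):=\spa'(2n+1)+\spa'(4n+1)+\spa'(8n+1)$, showing $T(n)\equiv 1$ for all $n\geq 0$. First I would check the base case $n=0$ by inspection: $\spa'(1)+\spa'(1)+\spa'(1)=3\equiv 1$. For the inductive step with $n\geq 1$, I would repeatedly apply the odd-index recurrence $\spa'(2m+1)=\spa'(2m-1)+\spa'(2m-2)$ together with the halving rule $\spa'(2m)=\spa'(m)$ to each of the three terms, peeling off the odd arguments $2n+1$, $4n+1$, $8n+1$ one step at a time until every resulting term is either an already-even-indexed value that collapses via $\spa'(2m)=\spa'(m)$, or matches an argument appearing in $T(n-1)$.

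The concrete computation should go as follows. For the first term, $\spa'(2n+1)=\spa'(2n-1)+\spa'(2n-2)=\spa'(2n-1)+\spa'(n-1)$. For the second, $\spa'(4n+1)=\spa'(4n-1)+\spa'(4n-2)=\spa'(4n-1)+\spa'(2n-1)$, and then $\spa'(4n-1)=\spa'(2(2n-1)+1)=\spa'(4n-3)+\spa'(4n-4)=\spa'(4n-3)+\spa'(n-1)$. For the third, $\spa'(8n+1)=\spa'(8n-1)+\spa'(8n-2)=\spa'(8n-1)+\spa'(4n-1)$; iterating, $\spa'(8n-1)=\spa'(2(4n-1)+1)=\spa'(8n-3)+\spa'(8n-4)=\spa'(8n-3)+\spa'(4n-2)=\spa'(8n-3)+\spa'(2n-1)$, and one further unwinding gives $\spa'(8n-3)$ in terms of $\spa'(8n-5)$ and $\spa'(4n-3)$, or whatever is needed to reach $\spa'(8(n-1)+1)=\spa'(8n-7)$. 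Adding the three expansions modulo $2$, the goal is that all the ``extra'' terms ($\spa'(n-1)$'s, $\spa'(2n-1)$'s, $\spa'(4n-2)$'s, etc.) appear an even number of times and hence cancel, leaving exactly $\spa'(2(n-1)+1)+\spa'(4(n-1)+1)+\spa'(8(n-1)+1)=T(n-1)\equiv 1$ by the induction hypothesis.

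The main obstacle is bookkeeping: making sure each of the three terms is unwound to precisely the right depth (one halving for $\spa'(2n+1)$, two for $\spa'(4n+1)$, three for $\spa'(8n+1)$) so that the leftover terms pair up perfectly and the residual is $T(n-1)$ rather than some other combination. If a clean cancellation to $T(n-1)$ does not fall out, the fallback is to prove instead a joint statement about the parity vector $\{\spa'(\text{arguments})\}$ indexed by residues modulo a suitable power of $2$ times an odd modulus — analogous to the modulo-$7$ argument in Theorem \ref{cong_spa} — strong enough to imply the claimed congruence; but I expect the direct three-term induction, as in Theorem \ref{cong_dt}, to close without needing that.
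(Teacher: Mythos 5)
Your approach is the intended one: the paper omits the proof of this proposition (its blanket remark is that such propositions follow from the recurrences and induction), and your plan mirrors the paper's proof of Theorem \ref{cong_dt}. The cancellation you flagged as the main risk does close. Writing $T(n)$ for the sum, your expansions give, for $n\geq 1$,
\begin{align*}
\spa'(2n+1)&= \spa'(2n-1)+\spa'(n-1),\\
\spa'(4n+1)&= \spa'(4n-3)+\spa'(n-1)+\spa'(2n-1),\\
\spa'(8n+1)&= \spa'(8n-7)+\spa'(n-1)+\spa'(4n-3)+\spa'(2n-1)+\spa'(4n-1).
\end{align*}
Summing modulo $2$, the two copies of $\spa'(4n-3)$ cancel, while $\spa'(n-1)$ and $\spa'(2n-1)$ each occur three times and so survive once, leaving $T(n)\equiv \spa'(2n-1)+\spa'(n-1)+\spa'(8n-7)+\spa'(4n-1)$. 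One further application of $\spa'(4n-1)=\spa'(4n-3)+\spa'(n-1)$ removes the remaining $\spa'(n-1)$ and yields $T(n)\equiv \spa'(2n-1)+\spa'(4n-3)+\spa'(8n-7)=T(n-1)$; with $T(0)=3\spa'(1)=3\equiv 1$ the induction closes, and your fallback is not needed. (So the leftover terms do not all literally occur an even number of times at the depths you prescribe --- the stray $\spa'(4n-1)$ needs one extra unwinding --- but that is exactly the ``one further unwinding'' you allowed for.)
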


We also have a companion to Corollary \ref{cong_span} for $\spa'(n)$.

\begin{theorem} \label{cong_spa'n} For $n\geq 0$, \begin{equation}\label{spa'-cong}\spa'(8n+7)\equiv \spa'(n)+1.\end{equation}
\end{theorem}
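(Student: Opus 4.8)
The plan is to bypass induction altogether and reduce \eqref{spa'-cong} to Proposition~\ref{cong_spa'_2} by simply unwinding the defining recursions of $\spa'$ at the argument $8n+7$. First I would apply the odd rule $\spa'(2m+1)=\spa'(2m-1)+\spa'(2m-2)$ three times in succession: writing $8n+7=2(4n+3)+1$ gives $\spa'(8n+7)=\spa'(8n+5)+\spa'(8n+4)$; writing $8n+5=2(4n+2)+1$ gives $\spa'(8n+5)=\spa'(8n+3)+\spa'(8n+2)$; and writing $8n+3=2(4n+1)+1$ gives $\spa'(8n+3)=\spa'(8n+1)+\spa'(8n)$. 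Then I would collapse the even arguments using the rule $\spa'(2m)=\spa'(m)$: namely $\spa'(8n+4)=\spa'(4n+2)=\spa'(2n+1)$, $\spa'(8n+2)=\spa'(4n+1)$, and $\spa'(8n)=\spa'(4n)=\spa'(2n)=\spa'(n)$.

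Assembling these identities yields the exact (not merely modulo $2$) equality
\[
\spa'(8n+7)=\spa'(8n+1)+\spa'(4n+1)+\spa'(2n+1)+\spa'(n).
\]
Transposing $\spa'(n)$ to the left-hand side and reducing modulo $2$ (so that $-\spa'(n)\equiv\spa'(n)$), the statement \eqref{spa'-cong} becomes equivalent to $\spa'(2n+1)+\spa'(4n+1)+\spa'(8n+1)\equiv 1$, which is precisely Proposition~\ref{cong_spa'_2}. Hence the theorem follows immediately.

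The only things that need checking are bookkeeping details: each use of the odd recurrence must have $m\ge 1$, and here $m\in\{4n+3,\,4n+2,\,4n+1\}$, all $\ge 1$ for every $n\ge 0$; similarly the even rule is applied only with $m\ge 1$ (or trivially when $m=0$), and a quick sanity check at $n=0$ gives $\spa'(7)+\spa'(0)=3+0\equiv 1$. Consequently I do not anticipate a genuine obstacle — all of the content sits in Proposition~\ref{cong_spa'_2}, and the present proof is just a short chain of substitutions. If one preferred a self-contained argument, the same displayed identity, together with $\spa'(8n+1)=\spa'(8(n-1)+7)+\spa'(8n-2)$, feeds a strong induction on $n$; but invoking Proposition~\ref{cong_spa'_2} is cleaner and is the route I would take.
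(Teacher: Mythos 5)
Your proposal is correct and is essentially identical to the paper's own proof: both unwind the odd recurrence three times at $8n+7$, collapse the even arguments via $\spa'(2m)=\spa'(m)$ to obtain $\spa'(8n+7)=\spa'(8n+1)+\spa'(4n+1)+\spa'(2n+1)+\spa'(n)$, and then invoke Proposition~\ref{cong_spa'_2}. No further comment is needed.
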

\begin{proof}
Using the recurrences and Proposition \ref{cong_spa'_2},we have \begin{align*}\spa'(8n+7) & = 
\spa'(8n+5)+\spa'(8n+4) \\ & =
\spa'(8n+3)+\spa'(8n+2)+\spa'(8n+4) \\ & = 
\spa'(8n+1)+\spa'(8n)+\spa'(4n+1)+\spa'(2n+1) \\ & \equiv
\spa'(n)+1. \end{align*} 
\end{proof}

\begin{proposition} For $n\geq 0$, \begin{align*}\spa'(4n+1)+\spa'(4n+4) & \equiv \spa'(4n+5)+\spa'(4n+10) \\ & \equiv
\spa'(4n+6)+\spa'(4n+7) \\ & \equiv
\spa'(4n+8)+\spa'(4n+11) \\ & \equiv
\spa'(4n+12)+\spa'(4n+17) \\ & \equiv
\spa'(4n+13)+\spa'(4n+14) \\ & \equiv
\spa'(4n+15)+\spa'(4n+18) \\ & \equiv
\spa'(4n+9).
\end{align*}
\end{proposition}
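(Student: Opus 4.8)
The plan is to collapse the fifteen arguments occurring in the statement down to $\spa'$ evaluated at a short common list of arguments, and then do a $\mathbb Z/2$ bookkeeping. The two tools are the defining recurrences, packaged as $\spa'(4n+2k)=\spa'(2n+k)$ (apply $\spa'(2m)=\spa'(m)$ twice) and, by induction on $k\ge 0$, the exact identity
\[
\spa'(4n+2k+1)=\spa'(4n+1)+\sum_{j=0}^{k-1}\spa'(2n+j),
\]
which follows from $\spa'(2m+1)=\spa'(2m-1)+\spa'(2m-2)$ together with $\spa'(2m-2)=\spa'(m-1)$. First I would apply these two identities termwise: each even argument $4n+2k$ becomes $\spa'(2n+k)$, and each odd argument $4n+2k+1$ becomes $\spa'(4n+1)$ plus a partial sum of the $\spa'(2n+j)$. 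After this step every one of the eight quantities in the chain is rewritten as $\spa'(4n+1)$ plus a $\mathbb Z/2$-sum of terms $\spa'(2n+j)$ with $0\le j\le 9$.

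Next, writing $s_j:=\spa'(2n+j)$, the odd-argument recurrence applied to $\spa'(2(n+i)+1)=\spa'(2(n+i)-1)+\spa'(2(n+i)-2)$ gives $s_3\equiv s_1+s_0$, $s_5\equiv s_3+s_2$, $s_7\equiv s_5+s_4$, and $s_9\equiv s_7+s_6$. Using these to eliminate $s_5,s_7,s_9$ and to replace $s_3$ by $s_0+s_1$, each of the eight expressions reduces modulo $2$ to
\[
\spa'(4n+1)+s_0+s_1+s_2+s_3=\spa'(4n+1)+\spa'(2n)+\spa'(2n+1)+\spa'(2n+2)+\spa'(2n+3),
\]
which is exactly $\spa'(4n+9)$ (the $k=4$ case of the displayed identity), and which also equals $\spa'(4n+1)+\spa'(4n+4)$ modulo $2$ because $s_0+s_1+s_3\equiv 0$ and $s_2=\spa'(2n+2)=\spa'(4n+4)$. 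Hence all eight expressions are congruent. In several cases (for instance $\spa'(4n+6)+\spa'(4n+7)$, which is already the displayed form, and $\spa'(4n+8)+\spa'(4n+11)$, where the two copies of $s_4$ simply cancel) almost no work is needed beyond the initial rewriting.

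The computation is entirely mechanical; the only genuine labor is organizing it so the $\mathbb Z/2$ cancellations are transparent, and the displayed identity for $\spa'(4n+2k+1)$ is what keeps the bookkeeping short. The one point to watch is index parity when applying the odd-argument recurrence: $s_j$ may be reduced only when $j$ is odd, and the reduction lowers $j$ by $2$, so it is cleanest to eliminate the largest odd index first and work downward. No earlier congruence result (Proposition~\ref{cong_spa'_2} or Theorem~\ref{cong_spa'n}) is needed here — only the recurrences themselves.
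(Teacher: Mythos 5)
Your proof is correct: the identity $\spa'(4n+2k+1)=\spa'(4n+1)+\sum_{j=0}^{k-1}\spa'(2n+j)$ holds by the induction you describe, the relations $s_3\equiv s_0+s_1$, $s_5\equiv s_2+s_3$, $s_7\equiv s_4+s_5$, $s_9\equiv s_6+s_7$ are valid for all $n\geq 0$, and each of the eight expressions does reduce to $\spa'(4n+1)+\spa'(2n+2)$ modulo $2$. The paper omits the proof of this proposition, indicating only that it follows from the recurrences and induction, which is exactly the kind of argument you give.
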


\begin{proposition} For $n\geq 0$, $\spa'(2n)+\spa'(2n+1) + \spa'(2n+3)\equiv 0$.  
\end{proposition}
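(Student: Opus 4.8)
The plan is to observe that this congruence is an immediate consequence of a single application of the defining recurrence, so no induction is needed. The key step is to rewrite the third summand: writing $2n+3 = 2(n+1)+1$ and applying the odd‑index recurrence with $m = n+1 \geq 1$ gives
\[
\spa'(2n+3) = \spa'(2(n+1)-1) + \spa'(2(n+1)-2) = \spa'(2n+1) + \spa'(2n),
\]
which is valid for every $n \geq 0$ (the recurrence $\spa'(2m+1)=\spa'(2m-1)+\spa'(2m-2)$ holds for all $m\geq 1$, and in particular for $m=1$, where it correctly yields $\spa'(3)=\spa'(1)+\spa'(0)=1$).

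Substituting this identity into the left‑hand side, I would then compute
\[
\spa'(2n) + \spa'(2n+1) + \spa'(2n+3) = \spa'(2n) + \spa'(2n+1) + \bigl(\spa'(2n+1)+\spa'(2n)\bigr) = 2\spa'(2n) + 2\spa'(2n+1),
\]
and since each term is even, the sum is $\equiv 0 \pmod 2$, as claimed.

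There is essentially no obstacle here: the only thing to be careful about is the range of validity of the recurrence at the boundary $n=0$, which I have checked above. (As a sanity check one can verify the first few cases against the table of values, e.g.\ $n=0$ gives $0+1+1=2$, $n=1$ gives $1+1+2=4$, $n=2$ gives $1+2+3=6$.) I would present the argument in the two displayed lines above, preceded by the remark that $\spa'(2n+3)=\spa'(2n+1)+\spa'(2n)$, and that completes the proof.
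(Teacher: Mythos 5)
Your proof is correct: the paper omits proofs of its propositions, noting they follow from the recurrences, and your one-line application of the odd-index recurrence $\spa'(2n+3)=\spa'(2n+1)+\spa'(2n)$ (valid for $n\geq 0$) is exactly the intended argument. The boundary check at $n=0$ and the numerical sanity checks are all consistent with the paper's table of values.
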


\subsection{Total Number of Parts in $\SPa(n)$ and $\SPa'(n)$}

Let $\pspa(n)$ be the total number of parts in the partitions of $\SPa(n)$, whose initial conditions and recurrences imply that $\pspa(1)=0$,  $\pspa(2)=0$,  $\pspa(3)=1$,  and for $n>1$,
\begin{align} \notag
 \pspa(2n)& \notag =\pspa(n),\\ 
 \pspa(2n+1)& \label{pspanum} =\pspa(2n-1)+\pspa(2n-2)+3\spa(2n-2).\end{align}

The total number of parts in the partitions of $\SPa'(n)$, $\pspa'(n)$, is defined exactly like $\pspa(n)$ except for the initial values. We have   $\pspa'(1) =1$, $\pspa'(2)=1$, $\pspa'(3)=1$ and  for $n>1$, 
\begin{align}\notag
\pspa'(2n)& =\pspa'(n),\\ \label{pspa'num} \pspa'(2n+1)& =\pspa'(2n-1)+\pspa'(2n-2)+3\spa'(2n-2).\end{align}

The following table shows the values of the sequences $\pspa(n)$ and $\pspa'(n)$ for $n = 1, 2, \ldots, 12$.

$$\begin{array}
{c|ccccccccccccc}
n & 1 & 2 & 3 & 4 & 5 & 6 & 7 & 8 & 9 & 10 & 11 & 12 \\
\hline
\pspa(n) & 0 & 0 & 1 & 0 & 1 & 1 & 1 & 0 & 5 & 1 & 5 & 1 \\
\pspa'(n) & 1 & 1 & 1 & 1 & 5 & 1 & 9 & 1 & 13 & 5 & 17 & 1
\end{array} $$

\begin{proposition}
For all $n\geq 0$, 
\begin{align} \label{pspacong1}\pspa(4n+1)+\pspa(4n+2)+\pspa(4n+3)+\pspa(4n+6)& \equiv 0, \\ \label{pspa'cong1}\pspa'(4n+1)+\pspa'(4n+2)+\pspa'(4n+3)+\pspa'(4n+6)& \equiv 0. \end{align}

\end{proposition}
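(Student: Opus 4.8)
The plan is to prove both congruences \eqref{pspacong1} and \eqref{pspa'cong1} by the same induction on $n$, mirroring the proof of \eqref{pst-c} for the semi-tribonacci case. I would first expand the left-hand side using the recurrences \eqref{pspanum} (resp.\ \eqref{pspa'num}). Since $4n+2 = 2(2n+1)$ and $4n+6 = 2(2n+3)$, the halving rule gives $\pspa(4n+2) = \pspa(2n+1)$ and $\pspa(4n+6) = \pspa(2n+3)$; similarly $4n+1$ and $4n+3$ are odd, so $\pspa(4n+1) = \pspa(2(2n)+1) = \pspa(4n-1) + \pspa(4n-2) + 3\spa(4n-2)$ and $\pspa(4n+3) = \pspa(2(2n+1)+1) = \pspa(4n+1) + \pspa(4n) + 3\spa(4n)$. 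After using the halving rule once more on every even argument that appears ($4n-2 = 2(2n-1)$, $4n = 2(2n)$, $4n+2=2(2n+1)$, $4n+6=2(2n+3)$), the whole sum collapses to a combination of $\pspa$ and $\spa$ values at arguments of the form $2k\pm 1$ together with a copy of $\pspa(4n-1)$, which is exactly the left-hand side pattern shifted down by one index.

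The key arithmetic simplification is that all the $\spa$ (resp.\ $\spa'$) terms should cancel modulo $2$: the coefficient $3 \equiv 1$, and I expect the $\spa$-arguments to pair up or to combine via the known parity identity \eqref{spacong1} (resp.\ Proposition~\ref{cong_spa'_2} and the propositions immediately following it). So the second step is to discard the $\spa$-contributions using Theorem~\ref{cong_spa} / \eqref{spacong1} for the unprimed case, and the analogous stated propositions for the primed case, leaving a pure-$\pspa$ (resp.\ pure-$\pspa'$) congruence. The third step is to recognize that what remains is $\pspa(4(n-1)+1) + \pspa(4(n-1)+2) + \pspa(4(n-1)+3) + \pspa(4(n-1)+6)$ plus terms that vanish modulo $2$, so the induction hypothesis closes the argument. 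The base case $n=0$ is checked from the table: $\pspa(1)+\pspa(2)+\pspa(3)+\pspa(6) = 0+0+1+1 = 2 \equiv 0$ and $\pspa'(1)+\pspa'(2)+\pspa'(3)+\pspa'(6) = 1+1+1+1 = 4 \equiv 0$.

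I expect the main obstacle to be the bookkeeping of the $\spa$ (and especially $\spa'$) terms: for the primed sequence there is no clean residue-mod-$7$ description, so instead of \eqref{spacong1} one must invoke the correct combination among Proposition~\ref{cong_spa'_2}, the long chain of equivalences in the proposition listing $\spa'(4n+1)+\spa'(4n+4) \equiv \cdots \equiv \spa'(4n+9)$, and the identity $\spa'(2n)+\spa'(2n+1)+\spa'(2n+3)\equiv 0$. Choosing which of these to apply, and at which shifted index, is the delicate part; everything else is a routine unwinding of the doubling recurrences. It is likely cleanest to write the unprimed proof in full and then remark that the primed proof is identical except that \eqref{spacong1} is replaced by the stated $\spa'$-congruences, so the details are omitted as is done elsewhere in the paper.

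\begin{proof}
We prove \eqref{pspacong1} by induction on $n$; the proof of \eqref{pspa'cong1} is analogous, using the parity identities for $\spa'(n)$ in place of those for $\spa(n)$, and we omit it.

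The base case $n=0$ holds since $\pspa(1)+\pspa(2)+\pspa(3)+\pspa(6)=0+0+1+1\equiv 0$. Let $n\geq 1$ and assume \eqref{pspacong1} holds with $n$ replaced by $n-1$. Using \eqref{pspanum} and the doubling rule $\pspa(2m)=\pspa(m)$ repeatedly, together with $3\equiv 1$, we compute
\begin{align*}
\pspa(4n+1)&=\pspa(4n-1)+\pspa(4n-2)+3\spa(4n-2)\\
&\equiv \pspa(4n-1)+\pspa(2n-1)+\spa(2n-1),\\
\pspa(4n+3)&=\pspa(4n+1)+\pspa(4n)+3\spa(4n)\\
&\equiv \pspa(4n+1)+\pspa(2n)+\spa(2n)\\
&\equiv \pspa(4n-1)+\pspa(2n-1)+\spa(2n-1)+\pspa(n)+\spa(n),\\
\pspa(4n+2)&=\pspa(2n+1)=\pspa(2n-1)+\pspa(2n-2)+3\spa(2n-2)\\
&\equiv \pspa(2n-1)+\pspa(n-1)+\spa(n-1),\\
\pspa(4n+6)&=\pspa(2n+3)=\pspa(2n+1)+\pspa(2n)+3\spa(2n)\\
&\equiv \pspa(2n+1)+\pspa(n)+\spa(n)\\
&\equiv \pspa(2n-1)+\pspa(n-1)+\spa(n-1)+\pspa(n)+\spa(n).
\end{align*}
Adding the four expressions, every $\pspa$-term appears an even number of times except $\pspa(4n-1)$, which appears twice and hence also cancels modulo $2$, while the remaining $\pspa(2n-1)$-terms appear four times. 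Hence the entire left-hand side of \eqref{pspacong1} is congruent modulo $2$ to
$$\spa(2n-1)+\spa(n)+\spa(n-1)+\spa(2n-1)+\spa(n)+\spa(n-1)\equiv 0.$$
This completes the induction.
\end{proof}
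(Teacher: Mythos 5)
Your computation is essentially correct, and since the paper omits the proof of this proposition there is no argument of its own to compare against; unwinding the recurrences \eqref{pspanum} and \eqref{pspa'num} together with the halving rule is exactly the kind of argument the paper intends. Two remarks. First, your proof is not actually an induction: after your four expansions, every term occurs with even multiplicity --- $\pspa(4n-1)$ twice, $\pspa(2n-1)$ four times, and $\pspa(n)$, $\pspa(n-1)$, $\spa(2n-1)$, $\spa(n)$, $\spa(n-1)$ each twice --- so the sum vanishes modulo $2$ outright and the induction hypothesis is never invoked. This also means the worry in your plan about needing \eqref{spacong1} or the various $\spa'$ parity identities is unfounded: the $\spa$-terms cancel in pairs on their own, which is precisely why the primed case is word-for-word identical rather than merely ``analogous.''

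Second, there is one genuine (though easily repaired) gap at $n=1$. The recurrence $\pspa(2n+1)=\pspa(2n-1)+\pspa(2n-2)+3\spa(2n-2)$ is stated only for $n>1$, because $\pspa(3)=1$ is an initial condition and does not satisfy it (the right-hand side would involve $\pspa(0)$ and $\spa(0)$). Your step for $n=1$ expands $\pspa(4n+2)=\pspa(3)$, and again $\pspa(2n+1)=\pspa(3)$ inside the $\pspa(4n+6)$ line, using this out-of-range instance of the recurrence. The fix is to treat $n=1$ as a second base case, checked from the tables: $\pspa(5)+\pspa(6)+\pspa(7)+\pspa(10)=1+1+1+1\equiv 0$ and $\pspa'(5)+\pspa'(6)+\pspa'(7)+\pspa'(10)=5+1+9+5\equiv 0$. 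The direct cancellation argument then covers all $n\geq 2$, where every recurrence you invoke is within its stated range.
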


\begin{theorem} \label{cong_pspa} For $n\geq 1$, 
\begin{align}\label{pspa1}
\pspa(8n+2)& \equiv \pspa(8n-5),\\ \label{pspa'1}
 \pspa'(8n+2)& \equiv \pspa'(8n-5).\end{align}
\end{theorem}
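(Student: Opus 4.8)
The plan is to prove both congruences \eqref{pspa1} and \eqref{pspa'1} in parallel by induction on $n$, since $\pspa$ and $\pspa'$ satisfy the same recurrence \eqref{pspanum}/\eqref{pspa'num} for odd arguments and the same halving rule at even arguments; only the initial data and the auxiliary sequences $\spa$ versus $\spa'$ differ. First I would expand both sides using the recurrences. For $\pspa(8n+2)$, repeated use of the halving rule gives $\pspa(8n+2)=\pspa(4n+1)$, and then one application of \eqref{pspanum} writes this in terms of $\pspa(4n-1)$, $\pspa(4n-2)$ and $3\spa(4n-2)$, each of which can be reduced further: $\pspa(4n-2)=\pspa(2n-1)$ and $\spa(4n-2)=\spa(2n-1)$. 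For the right-hand side, $\pspa(8n-5)=\pspa(8n-6)+\pspa(8n-7)+3\spa(8n-7)$, and $\pspa(8n-6)=\pspa(4n-3)$, so I would unwind $\pspa(8n-7)$ one more step as well. The goal is to reduce the difference $\pspa(8n+2)-\pspa(8n-5)$ modulo $2$ to a combination of lower-index terms to which either the induction hypothesis or one of the already-established parity results (Corollary \ref{cong_spa}, Corollary \ref{cong_span}, Corollary \ref{cong_span} for $\spa$, and Theorem \ref{cong_spa'n}, Proposition \ref{cong_spa'_2} for $\spa'$, plus \eqref{pspacong1}/\eqref{pspa'cong1}) applies.

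The key intermediate claim I expect to need is a parity identity purely among $\pst$-free $\pspa$ values at a bounded collection of residues — most likely that modulo $2$ the quantity $\pspa(4n+1)+\pspa(4n-3)$ telescopes, analogous to how Theorem \ref{cong_dt} and the proof of \eqref{pst-c} produced a two-term cancellation. Concretely, after expanding I would expect $\pspa(8n+2)+\pspa(8n-5)$ to collapse (mod $2$) to something of the shape $\pspa(4n+1)+\pspa(4n-3)+3\bigl(\spa(2n-1)+\spa(4n-3)\bigr)+(\text{even})$, and then I would handle the $\spa$-terms with Corollary \ref{cong_spa} (checking the residues of $2n-1$ and $4n-3$ mod $7$ as in the proof of Corollary \ref{cong_spa_2}) and the $\pspa$-terms either by the induction hypothesis with $n$ replaced by $n-1$ or by \eqref{pspacong1}. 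For the primed version the same algebra goes through verbatim; the only change is that $\spa$ is replaced by $\spa'$, and where $\spa$-parity came from Corollary \ref{cong_spa} one instead invokes the relevant primed parity result, picking up the extra constant $1$'s from Proposition \ref{cong_spa'_2} and Theorem \ref{cong_spa'n} and checking that they cancel in pairs so that the final right-hand side is still $0$ mod $2$.

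The main obstacle will be the bookkeeping: keeping track of which halving steps are legal (they require the argument to be even, which is automatic here since $8n+2$, $8n-6$, $8n-5+1$, etc. have the right parities, but each reduction must be justified) and, more seriously, making the residue analysis mod $7$ line up. Unlike the clean $\spa(8n+7)\equiv\spa(n)$ statement, here the arguments $2n-1$, $4n-3$, $8n-7$ run through several residue classes as $n$ varies, so I would either split into cases $n\bmod 7$ with a small table (as in Corollary \ref{cong_spa_2}) or, preferably, first prove a lemma of the form $\spa(2n-1)+\spa(4n-3)\equiv c$ for a constant $c$ by using $4n-3=2(2n-1)-1$ together with the recurrence $\spa(2m+1)=\spa(2m-1)+\spa(2m-2)$ and Corollary \ref{cong_spa} once. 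With that lemma in hand the induction closes immediately and the same scaffold, with the primed parity inputs substituted, finishes \eqref{pspa'1}; I would present the unprimed case in full and remark that the primed case is identical up to the bounded constants, which is consistent with the paper's stated convention of omitting parallel proofs.
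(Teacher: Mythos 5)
Your overall scaffold (induction plus repeated use of the halved recurrences, reduced mod $2$) is the same as the paper's, but both intermediate claims you pin the argument on fail, and the primed case has no available input under your plan. Concretely: the conjectured collapse $\pspa(8n+2)+\pspa(8n-5)\equiv \pspa(4n+1)+\pspa(4n-3)+\spa(2n-1)+\spa(4n-3)$ is false already at $n=1$ (left side $\pspa(10)+\pspa(3)=1+1\equiv 0$, right side $\pspa(5)+\pspa(1)+2\spa(1)=1\equiv 1$) and again at $n=3$; and the proposed rescue lemma ``$\spa(2n-1)+\spa(4n-3)\equiv c$ for a constant $c$'' is also false, since $n=2$ gives $\spa(3)+\spa(5)=1+1\equiv 0$ while $n=3$ gives $\spa(5)+\spa(9)=1+2\equiv 1$. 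Your remaining fallback, a case split on $n\bmod 7$ via Theorem \ref{cong_spa}, could in principle patch the unprimed case, but it is unavailable for \eqref{pspa'1}: the paper explicitly states that no mod-$7$ parity characterization of $\spa'$ is known, and neither Proposition \ref{cong_spa'_2} nor Theorem \ref{cong_spa'n} pins down the parity of the individual $\spa'$ values your expansion would leave behind.

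The missing idea is to expand far enough, and in the right pattern, that no pointwise parity of $\spa$ or $\spa'$ is ever needed. The paper applies $\pspa(2k+1)\equiv\pspa(2k-1)+\pspa(2k-2)+\spa(2k-2)$ four times to $\pspa(8n-5)$, reaching $\pspa(8n-13)=\pspa(8(n-1)-5)$, and twice to $\pspa(8n+2)=\pspa(4n+1)$, reaching $\pspa(4n-3)=\pspa(8(n-1)+2)$; the induction hypothesis cancels those two anchors against each other, the two copies of $\pspa(4n-4)$ and of $\spa(4n-4)$ cancel, and the survivors are exactly $\pspa(4m+1)+\pspa(4m+2)+\pspa(4m+3)+\pspa(4m+6)$ and $\spa(4m+1)+\spa(4m+2)+\spa(4m+3)+\spa(4m+6)$ with $m=n-2$, which vanish mod $2$ by \eqref{pspacong1} and \eqref{spacong1}. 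Those four-term congruences are precisely the tools that transfer to the primed setting (the $\pspa'$ version is \eqref{pspa'cong1}, and the $\spa'$ version is proved the same way), which is why the identical computation yields \eqref{pspa'1}. You do list \eqref{pspacong1} among your candidate tools, but your expansion is not organized to produce these four-term blocks, and without them the induction does not close.
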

\begin{proof} 
From \eqref{pspanum} and \eqref{pspa'num}, it follows that for $n>1$ we have \begin{align}\label{pspaequiv} \pspa(2n+1) & \equiv\pspa(2n-1)+\pspa(2n-2)+\spa(2n-2), \\ \pspa'(2n+1) & \label{pspa'equiv}\equiv\pspa'(2n-1)+\pspa'(2n-2)+\spa'(2n-2) .\end{align}

Now we can prove \eqref{pspa1} by induction. From the table above we see that $$\pspa(8\cdot 1+2)\equiv \pspa(8 \cdot 1-5).$$ Let $n\geq 2$ be an integer and assume that $$\pspa(8(n-1)+2)\equiv \pspa(8(n-1)-5).$$ Using \eqref{pspaequiv} four times, we have \begin{align*}
\pspa(8n-5) \equiv\, & \pspa(8n-13)\\ & + \pspa(8n-8)+ \pspa(8n-10)+ \pspa(8n-12)+ \pspa(8n-14)\\ & + \spa(8n-8)+ \spa(8n-10)+ \spa(8n-12)+ \spa(8n-14)\\ = \, & \pspa(8n-13)\\ & + \pspa(4n-4)+ \pspa(4n-5)+ \pspa(4n-6)+ \pspa(4n-7)\\ & + \spa(4n-4)+ \spa(4n-5)+ \spa(4n-6)+ \spa(4n-7).\end{align*}Similarly, \begin{align*}
\pspa(8n+2) & = \pspa(4n+1)\\ & \equiv \pspa(4n-3) +\pspa(4n-2)+\pspa(4n-4) \\ & \qquad \qquad\qquad \quad +\spa(4n-2)+\spa(4n-4)\\ & = \pspa(8n-6) +\pspa(4n-2)+\pspa(4n-4) \\ & \qquad \qquad\qquad \quad +\spa(4n-2)+\spa(4n-4).
\end{align*} Using the induction hypothesis together with \eqref{spacong1} and \eqref{pspacong1} completes the proof of \eqref{pspa1}. 
The proof of \eqref{pspa'1} is similar. 
\end{proof}

Recall that in Theorem \ref{cong_spa} we showed that the parity of $\spa(n)$ is completely determined by the residue of $n$ modulo $7$. In Corollary \ref{cong_spa_2}, we showed that $\spa(2n+1)+\spa(4n+1)+\spa(8n+1)$ is always even. The next theorem establishes the parity of $\pspa(2n+1)+\pspa(4n+1)+\pspa(8n+1)$ in terms of the residue of $n$ modulo $7$.

\begin{theorem} For $n\geq 0$, $\pspa(2n+1)+\pspa(4n+1)+\pspa(8n+1)\equiv 0$  if and only if $n\equiv 0,4,5 \pmod 7$.
\end{theorem}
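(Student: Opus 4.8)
The plan is to reduce the quantity $P(n):=\pspa(2n+1)+\pspa(4n+1)+\pspa(8n+1)$ modulo $2$ to a combination of $\pspa$-values and $\spa$-values at smaller arguments, and then to prove the claim by complete induction on $n$ working with a full period of $7$ (or $14$) consecutive values at a time, exactly as in the proof of Theorem \ref{cong_spa}. First I would use \eqref{pspaequiv}, i.e. $\pspa(2m+1)\equiv\pspa(2m-1)+\pspa(2m-2)+\spa(2m-2)$, to expand each of the three terms: $\pspa(2n+1)$ is already ``at level $n$'' (it equals $\pspa(n)$ when $n$ is even, and when $n$ is odd one more application brings it down); $\pspa(4n+1)=\pspa(2\cdot 2n+1)\equiv\pspa(4n-1)+\pspa(4n-2)+\spa(4n-2)$, and $\pspa(4n-2)=\pspa(2n-1)$; similarly $\pspa(8n+1)\equiv\pspa(8n-1)+\pspa(8n-2)+\spa(8n-2)$ with $\pspa(8n-2)=\pspa(4n-1)$, and then $\pspa(8n-1)$ and $\pspa(4n-1)$ can be pushed down further if needed. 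The goal of this bookkeeping is to express $P(n)$ modulo $2$ as $P(n')$ for a smaller $n'$ plus an explicit correction term built from $\spa$ and $\pspa$ values whose parity is already controlled — by Theorem \ref{cong_spa} (parity of $\spa$ determined by $n \bmod 7$), by Corollary \ref{cong_spa_2} (the identities \eqref{spacong3} and \eqref{spacong1}), and by the congruences \eqref{pspacong1} and \eqref{pspa1} of this subsection.

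The cleanest route is probably to prove directly that the sequence $\{P(n) \bmod 2\}_{n\ge 0}$ is periodic with period $7$ and to compute one period. Concretely, I would establish that $P(n)+P(n-7)\equiv 0$ for all $n\ge 7$ (which, together with checking $P(0),\dots,P(6)$ by hand from the table of $\pspa$ values, gives the result since the indicator of $\{0,4,5\}\bmod 7$ is exactly the period-$7$ pattern $(1,0,0,0,1,1,0)$ that one reads off). To get $P(n)+P(n-7)\equiv 0$, I would expand $P(n)$ as above, expand $P(n-7)$ the same way, and match terms: the ``main'' term $\pspa(n)+\pspa(n-7)$ vanishes mod $2$ once we know the period-$7$ behavior of $\pspa$ at the relevant residues (here one invokes \eqref{pspacong1} and the Padovan-period phenomenon from Theorem \ref{cong_spa}/\eqref{pspa1}, possibly after splitting $n$ according to its $2$-adic valuation), while the leftover $\spa$-terms cancel in pairs via \eqref{spacong3}, \eqref{spacong1}, and the residue table of $2r+1,4r+1,r+1$ modulo $7$ that already appears in the proof of Corollary \ref{cong_spa_2}. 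It will be convenient to treat the cases ``$n$ even'' and ``$n$ odd'' separately at the very start, since for even $n$ all three arguments $2n+1,4n+1,8n+1$ are odd but $n$ itself feeds into $\pspa(2n+1)\equiv\pspa(n)$ only after one step, whereas for odd $n$ we get an extra layer.

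The main obstacle I expect is not any single identity but the combinatorial accounting: keeping track of the $2$-adic valuation of $n$ so that the recursion $\pspa(2m)=\pspa(m)$ is applied the right number of times, and making sure every stray $\spa$-term produced along the way is absorbed by one of the already-proved congruences rather than left dangling. In particular the identity \eqref{spacong1}, $\spa(4n+1)+\spa(4n+2)+\spa(4n+3)+\spa(4n+6)\equiv 0$, and \eqref{pspacong1} are tailored exactly to kill the four-term blocks that the expansion of $\pspa(4n+1)$ and $\pspa(8n+1)$ generates, so the strategy is to arrange the expansion so those blocks appear literally. Once the expansion is organized, verifying $P(n)+P(n-7)\equiv 0$ becomes a finite check over the seven residues $r=n\bmod 7$, using Theorem \ref{cong_spa} to evaluate every $\spa$-term and the induction hypothesis (together with \eqref{pspa1}) to handle the $\pspa$-terms; this is the routine part I would not write out in full. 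A cross-check at the end: the predicted even set $\{0,4,5\}\bmod 7$ should be consistent with $P(n)\equiv \pspa(n)+(\text{terms that vanish on a full period})$, and indeed $\pspa$ restricted to the relevant residues reproduces the claimed pattern, which is a good sanity test before committing to the induction.
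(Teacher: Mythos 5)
Your strategy is sound and lands in essentially the same place as the paper's, but the route is genuinely different in one respect worth recording. The paper's key step is the one-step identity $r(n+1)\equiv r(n)+\spa(n)$ for $r(n)=\pspa(2n+1)+\pspa(4n+1)+\pspa(8n+1)$, obtained by a short expansion via the recurrences; once that is in hand, the mod-$7$ periodicity of the parity is immediate from Theorem \ref{cong_spa}, with no further cancellation needed. You instead aim directly at $P(n)\equiv P(n-7)$, and the cancellation you defer as ``routine'' is in fact the entire proof; it does close, but only via a specific chain. Iterating \eqref{pspaequiv} seven, fourteen, and twenty-eight times gives $P(n)+P(n-7)\equiv \sum_{j=1}^{7}\bigl(\pspa(n-j)+\spa(n-j)\bigr)+\sum_{j=1}^{14}\bigl(\pspa(2n-j)+\spa(2n-j)\bigr)+\sum_{j=1}^{28}\bigl(\pspa(4n-j)+\spa(4n-j)\bigr)$. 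The $\spa$-sums run over full periods of $7$, $14$, and $28$ consecutive arguments and each is even by Theorem \ref{cong_spa} (four odd residues per period of seven). For the $\pspa$-sums, the even-argument terms of the level-$2n$ sum reduce to the level-$n$ sum and cancel it mod $2$, and likewise the even-argument terms of the level-$4n$ sum cancel the level-$2n$ sum, leaving $\sum_{j=1}^{7}\pspa(n-j)$ plus the sum of $\pspa$ over the $14$ consecutive odd arguments $4n-27,\dots,4n-1$. Pairing the latter as $\pspa(4k+1)+\pspa(4k+3)\equiv\pspa(4k+2)+\pspa(4k+6)=\pspa(2k+1)+\pspa(2k+3)$ via \eqref{pspacong1} and telescoping leaves $\pspa(2n+1)+\pspa(2(n-7)+1)$, which the seven-fold iteration of \eqref{pspaequiv} converts back into $\sum_{j=1}^{7}\bigl(\pspa(n-j)+\spa(n-j)\bigr)\equiv\sum_{j=1}^{7}\pspa(n-j)$; the two leftovers therefore cancel. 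So your induction works (start it at $n=8$ so every application of the recurrence is in its stated range, checking $P(0),\dots,P(7)$ by hand), but note that the ``matching'' is not a term-by-term affair between $P(n)$ and $P(n-7)$ — it requires the telescoping between valuation levels just described — whereas the paper's identity $r(n+1)\equiv r(n)+\spa(n)$ reaches the same conclusion with far less bookkeeping.
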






\begin{proof}
We use induction. Let $$r(n)=\pspa(2n+1)+\pspa(4n+1)+\pspa(8n+1).$$ The base cases are $n=0,...,6$, for which $r(n)$ has respective values $0,1,1,1,0,0,1$, as expected. Let $n\geq 6$ and assume $r(n)$ has the correct parity. Then, using the recurrences several times,

\begin{align*} r(n + 1) & 
 =
\pspa(2 n + 3) + \pspa(4 n + 5) + \pspa(8 n + 9)\\ &
 \equiv
\pspa(2 n + 1) + \pspa(n) + \spa(n) \\ & \quad +
\pspa(4 n + 3) + \pspa(2 n + 1) + \spa(2 n + 1) \\ & \quad +
\pspa(8 n + 7) + \pspa(4 n + 3) + \spa(4 n + 3)\\ & 
 \equiv
\pspa(2 n + 1) + \pspa(n) + \spa(n) \\ &  \quad +
 \pspa(4 n + 1) + \pspa( n) + \spa( n) + \pspa(2 n + 1) + 
 \spa(2 n + 1)  \\ & \quad +
 \pspa(8 n + 5) + \pspa(2 n + 1) + \spa(2 n + 1) + \pspa(4 n + 3) + 
 \spa(4 n + 3)\\ & \equiv   
 \pspa(4 n + 1) + 
 \pspa(8 n + 5) + \pspa(2 n + 1) +   \pspa(4 n + 3) + 
 \spa(4 n + 3).\end{align*}

 Continuing with the recurrences in a similar manner, we obtain 
$$r(n+1)\equiv \pspa(2 n + 1) + \pspa(4 n + 1) + \pspa(8 n + 1) + \spa(n).$$
 
 That is, $r(n+1) \equiv r(n)+\spa(n)$. 
 
We use the induction hypothesis and Theorem 5.6 to see that $r(n+1)$ has the required parities. In the table below, the values of $r(n)$, $\spa(n)$, and $r(n+1)$ are modulo $2$.

$$\begin{array}
{ccccc}
n+1 & n & r(n) & \spa(n) & r(n+1) \\
\hline
0 & 6 & 1 & 1 & 0 \\
1 & 0 & 0 & 1 & 1 \\
2 & 1 & 1 & 0 & 1 \\
3 & 2 & 1 & 0 & 1 \\
4 & 3 & 0 & 0 & 0 \\
5 & 4 & 0 & 0 & 0 \\
6 & 5 & 0 & 1 & 1
\end{array} $$

\end{proof}

\begin{theorem} For $n\geq 0$, $$\pspa'(2n+1) +\pspa'(4n+1)+\pspa'(8n+1)\equiv \spa'(2n+1).$$\end{theorem}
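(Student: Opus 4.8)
The plan is to mirror the structure of the proof of the immediately preceding theorem (the one establishing $\pspa(2n+1)+\pspa(4n+1)+\pspa(8n+1)\equiv 0$ iff $n\equiv 0,4,5\pmod 7$), since the modified sequence $\spa'$ obeys exactly the same recurrence at odd arguments as $\spa$ and differs only in the initial conditions. Set $r'(n)=\pspa'(2n+1)+\pspa'(4n+1)+\pspa'(8n+1)$ and proceed by induction on $n$, establishing the one-step relation $r'(n+1)\equiv r'(n)+\spa'(n)$ together with the claimed value $r'(n)\equiv \spa'(2n+1)$.

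First I would unwind the recurrences exactly as in the preceding proof: expand $\pspa'(2n+3)$, $\pspa'(4n+5)$, $\pspa'(8n+9)$ using \eqref{pspa'equiv} (the mod-$2$ form of \eqref{pspa'num}) repeatedly until each term is reduced to arguments of the form $\pspa'(2n+1)$, $\pspa'(4n+1)$, $\pspa'(8n+1)$, $\pspa'(n)$, plus $\spa'$-terms. The $\pspa'$-bookkeeping is identical to the $\pspa$ case because the recurrences coincide, so this step yields $r'(n+1)\equiv r'(n)+\spa'(n)$ just as $r(n+1)\equiv r(n)+\spa(n)$ was obtained there; I would cite that parallel computation rather than redo it in full. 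Then the inductive step reduces to verifying that $\spa'(2(n+1)+1)\equiv \spa'(2n+1)+\spa'(n)$, i.e. that $\spa'(2n+3)\equiv\spa'(2n+1)+\spa'(n)$ modulo $2$. This follows directly from the defining recurrence $\spa'(2n+3)=\spa'(2n+1)+\spa'(2n)$ together with $\spa'(2n)=\spa'(n)$, so in fact the identity $\spa'(2n+3)=\spa'(2n+1)+\spa'(n)$ holds exactly, not just mod $2$. Combining $r'(n+1)\equiv r'(n)+\spa'(n)$ with the induction hypothesis $r'(n)\equiv\spa'(2n+1)$ gives $r'(n+1)\equiv\spa'(2n+1)+\spa'(n)=\spa'(2n+3)=\spa'(2(n+1)+1)$, closing the induction. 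I would also record the base case by inspection: from the table of $\pspa'$ and $\spa'$ values one checks $r'(n)\equiv\spa'(2n+1)$ for $n=0,\dots,6$ (or even just $n=0$, since the recurrence $r'(n+1)\equiv r'(n)+\spa'(n)$ then propagates it).

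The main obstacle is purely bookkeeping: carrying out the repeated substitutions in $\pspa'(8n+9)$ cleanly, making sure every intermediate $\pspa'$ of an even argument is correctly halved and every $\spa'$-term is tracked, so that the $\spa'$-terms collapse (mod $2$, using that pairs cancel) to exactly $\spa'(n)$ rather than some other combination. Since this computation is word-for-word the same as in the $\pspa$ theorem, the safest exposition is to note that the $\pspa'$-recurrence and the $\pspa$-recurrence are literally the same relation \eqref{pspa'equiv}~vs.~\eqref{pspaequiv}, deduce $r'(n+1)\equiv r'(n)+\spa'(n)$ by the same steps, and then finish with the short exact identity for $\spa'$ above. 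There is no analogue of the mod-$7$ case analysis here because the right-hand side is simply $\spa'(2n+1)$ rather than a fixed residue condition.
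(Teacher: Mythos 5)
Your proposal is correct, but it takes a genuinely different route from the paper. The paper's proof is a direct (non-inductive) reduction: it expands $\pspa'(8n+1)$ down to $\pspa'(8n-5)$ via \eqref{pspa'equiv}, invokes the already-established congruence $\pspa'(8n+2)\equiv\pspa'(8n-5)$ from Theorem \ref{cong_pspa} to fold that term back into $\pspa'(4n+1)$, and then simplifies the remaining $\pspa'$- and $\spa'$-terms. You instead run an induction on $n$, reusing verbatim the transfer relation $r'(n+1)\equiv r'(n)+\spa'(n)$ from the proof of the preceding theorem (legitimately, since \eqref{pspa'equiv} and \eqref{pspaequiv} are the same relation and the derivation there uses only the recurrences, not the initial values), and you close the induction with the exact identity $\spa'(2n+3)=\spa'(2n+1)+\spa'(2n)=\spa'(2n+1)+\spa'(n)$. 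Your version is arguably cleaner in that it explains \emph{why} the right-hand side is $\spa'(2n+1)$ (it is the unique solution of the same first-order recurrence mod $2$ with the right initial value), whereas the paper's route leans on the separately proved Theorem \ref{cong_pspa}; the paper's route avoids re-deriving or re-citing the long chain of substitutions. One small caution: the transfer relation $r'(n+1)\equiv r'(n)+\spa'(n)$ is only justified by the recurrences for $n\geq 1$ (the odd-argument recurrence for $\pspa'$ holds for $n>1$, and the expansion of $\pspa'(2n+3)$ at $n=0$ would touch undefined or initial-condition territory), so your parenthetical suggestion of verifying only the base case $n=0$ is not quite enough — you should check $n=0$ and $n=1$ directly, which your primary plan of inspecting $n=0,\dots,6$ from the table already covers.
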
 
\begin{proof} Using \eqref{pspa'equiv} we rewrite $\pspa'(8n+1)$ in terms of $\pspa'(8n-5)$. Then, using \eqref{pspa1}, the recurrences for $\pspa'(n)$ and $\spa'(n)$, and simplifying, the statement follows.
\end{proof}
Next, we present an interesting congruence modulo $3$ that connects the four sequences of this section. 

\begin{theorem} For $n\geq1$, $$\pspa'(n)-\pspa(n)\equiv \spa'(n)-\spa(n) \pmod 3.$$
\end{theorem}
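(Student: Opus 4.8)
The plan is to prove the congruence modulo $3$ by showing that, after reducing mod $3$, both the left-hand side and the right-hand side satisfy the \emph{same} recursive scheme with the \emph{same} initial conditions, whence they agree term by term. Set $\delta(n) := \pspa'(n)-\pspa(n)$ and $\epsilon(n) := \spa'(n)-\spa(n)$; we want $\delta(n)\equiv\epsilon(n)\pmod 3$ for all $n\ge 1$. Since $\spa$ and $\spa'$ satisfy the identical recurrence (only the initial values differ), $\epsilon$ satisfies $\epsilon(2n)=\epsilon(n)$ and $\epsilon(2n+1)=\epsilon(2n-1)+\epsilon(2n-2)$ for $n\ge 1$, with $\epsilon(0)=-1$, $\epsilon(1)=1$. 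Likewise, subtracting \eqref{pspanum} from \eqref{pspa'num} and using the corresponding definitions of the initial values, $\delta$ satisfies $\delta(2n)=\delta(n)$ and, for $n>1$,
\[
\delta(2n+1)=\delta(2n-1)+\delta(2n-2)+3\bigl(\spa'(2n-2)-\spa(2n-2)\bigr)\equiv \delta(2n-1)+\delta(2n-2)\pmod 3,
\]
with $\delta(1)=1$, $\delta(2)=1$, $\delta(3)=0$. So modulo $3$ the two sequences obey the same even rule and the same odd rule; it remains only to reconcile the initial data.

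First I would record the reduced recurrences explicitly and check the small cases: from the tables, $\spa(1)=0,\spa(2)=0,\spa(3)=1$ and $\spa'(1)=1,\spa'(2)=1,\spa'(3)=1$, so $\epsilon(1)=1$, $\epsilon(2)=1$, $\epsilon(3)=0$; and $\pspa(1)=0,\pspa(2)=0,\pspa(3)=1$, $\pspa'(1)=1,\pspa'(2)=1,\pspa'(3)=1$, so $\delta(1)=1$, $\delta(2)=1$, $\delta(3)=0$. Thus $\delta(n)\equiv\epsilon(n)$ for $n=1,2,3$. Then I would run a single complete induction: for $n\ge 2$, the even case is immediate since $\delta(2n)=\delta(n)\equiv\epsilon(n)=\epsilon(2n)$; for the odd case $2n+1$ with $n\ge 2$ (note $2n+1\ge 5$), both $2n-1$ and $2n-2$ are at least $2$, hence $<2n+1$, so by the induction hypothesis $\delta(2n-1)\equiv\epsilon(2n-1)$ and $\delta(2n-2)\equiv\epsilon(2n-2)$, and the reduced recurrences give $\delta(2n+1)\equiv\delta(2n-1)+\delta(2n-2)\equiv\epsilon(2n-1)+\epsilon(2n-2)=\epsilon(2n+1)\pmod 3$. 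This closes the induction and proves the theorem.

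The only genuine subtlety — and the step I expect to require the most care — is bookkeeping at the boundary between the ``initial'' regime and the recursive regime: the recurrences for $\pspa,\pspa'$ are only asserted for $n>1$, so $\delta(3)$ must be taken from the stated initial value ($\pspa(3)=1$, $\pspa'(3)=1$, giving $\delta(3)=0$) rather than computed from the recurrence, and similarly one must verify that $\epsilon(2n+1)=\epsilon(2n-1)+\epsilon(2n-2)$ already holds for $n=1$ only if needed (it is not, since we seed the induction at $n=1,2,3$ directly). Once the three base values are in hand and one observes that every odd index $\ge 5$ is produced by the recurrence from strictly smaller indices that are $\ge 2$, the induction goes through without incident. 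No deeper structure of the partition sets is needed; the factor of $3$ multiplying $\spa(2n-2)$ (resp.\ $\spa'(2n-2)$) in the part-counting recurrences is exactly what makes the ``correction term'' vanish mod $3$, which is the conceptual heart of the statement.
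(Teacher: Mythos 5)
Your proposal is correct and follows essentially the same route as the paper: a complete induction on $n$, splitting into even and odd cases, with the base cases $n=1,2,3$ checked directly and the key point being that the $3\spa(2n-2)$ (resp. $3\spa'(2n-2)$) terms in the part-counting recurrences vanish modulo $3$. Your packaging via the difference sequences $\delta$ and $\epsilon$ is just a cleaner bookkeeping of the identical argument.
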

\begin{proof}
One can check that the statement is true for $n=1,2,3$. Let $n\geq 3$ and assume that $\pspa'(k)-\pspa(k)\equiv \spa'(k)-\spa(k) \pmod 3$
for all $k<n$.
If $n=2t$, then 
\begin{align*} \pspa'(2t)-\pspa(2t) & = \pspa'(t)-\pspa(t) \\ & \equiv \spa'(t)-\spa(t) \pmod 3 \\ & =\spa'(2t)-\spa(2t) \\ & =\spa'(n)-\spa(n) .
 \end{align*}

 If $n=2t+1$, then 
\begin{align*} \pspa'(2t+1)& -\pspa(2t+1) \\ & \equiv \pspa'(2t-1)+ \pspa'(2t-2)\\ & \qquad \qquad -(\pspa(2t-1) +\pspa(2t-2))\pmod 3\\ & \equiv \spa'(2t-1)- \spa(2t-1)+\spa'(2t-2) -\spa(2t-2)\pmod 3\\ & = \spa'(2t+1)-\spa(2t+1).
 \end{align*}

\end{proof}

\section{Semi-Pell partitions} \label{sec_sp} 
\begin{definition}The semi-Pell sequence $\{\sp(n)\}_{n\geq 0}$ is defined recursively by $\sp(0)=0, \sp(1)=1$, and for $n\geq 1$, \begin{align*}\sp(2n)&=\sp(n),\\ \sp(2n+1)&=2\sp(2n)+\sp(2n-1).\end{align*} \end{definition}

The following table shows the values of the semi-Pell sequence $\sp(n)$ for $n = 0, 1, \ldots, 12$.

$$\begin{array}
{c|ccccccccccccc}
n & 0 & 1 & 2 & 3 & 4 & 5 & 6 & 7 & 8 & 9 & 10 & 11 & 12 \\
\hline
\sp(n) & 0 & 1 & 1 & 3 & 1 & 5 & 3 & 11 & 1 & 13 & 5 & 23 & 3
\end{array} $$

It is easily seen by induction that $\sp(n)$ is odd for all $n\geq 1$.

This is the sequence $\{f(n)\}_{n\geq 0}$ in Section \ref{sec_gen} with $a_0=0$, $a_1=1$, $c_1=2$, $c_2=1$, and $c_3=0$. Thus, we are in Case 1 of Section \ref{sec_gen} and \eqref{big} becomes 

\begin{align*}F(x) & =\sum_{i=0}^\infty \frac{x^{2^i}}{1-x^{2^{i+1}}} \prod_{r=0}^{i-1} \left(1+\frac{2x^{2^r}}{1-x^{2^{r+1}}}\right)\\
& =\sum_{i=0}^\infty \frac{x^{2^i}}{1-x^{2^{i+1}}} \prod_{r=0}^{i-1} \left(1+\frac{x^{2^r}}{1-x^{2^{r+1}}}+ \frac{x^{2^r}}{1-x^{2^{r+1}}}\right)\end{align*}

Let $\overline\OB'(n)$ be the set of odd binary overpartitions of $n$ in which the largest part must be overlined. For example, $$\overline\OB'(5)=\{\overline{4}+1,\, \overline{4}+\overline{1},\, \overline{2}+1+1+1,\, \overline{2}+\overline{1}+1+1,\, \overline{1}+1+1+1+1\}.$$ By an argument similar to that of Section \ref{sec_st}, $F(x)$ is the generating function for the sequence $| \overline\OB'(n)|$.

A composition of $n$ is an ordered list of positive integers whose sum is $n$. In \cite{KM}, the authors defined the semi-Pell compositions, which are enumerated by the semi-Pell sequence, as follows. “Let $\OC(n)$ be the set of weakly unimodal binary compositions of $n$ such that each part size occurs together, or “in one place,” an odd number of times. In other words every part size lies in a distinct ‘colony’. For example, members of $\OC(45)$ include, using the frequency notation, $(16, 4^3, 2^3, 1^{11}), (2^5, 4, 8^3, 1^7), (1^7, 8, 16, 2^7)$, but the following weakly unimodal binary compositions of $45$ do not belong to $\OC(45)$: $(2^3,4^3,16,2,1^9),(1^3,2^5,4,8^3,1^4)$." In the definition above, 'colony' refers to the set of parts either all before or all after of the maximum part of the unimodal composition. 

Our definition of $\sp(n)$ differs from that in \cite{KM} only for $n=0$; in \cite{KM}, $\sp(0)=1$. If $n>0$, there is an obvious correspondence between $\OC(n)$ and $\overline\OB'(n)$. If $\lambda \in \overline\OB'(n)$, and $2^k$ is an overlined part of $\lambda$ less than $\lambda_1$, remove the overline and place all parts equal to $2^k$ before all copies of the largest part. If $2^k$ appears in $\lambda$ but it is not overlined, place all parts equal to $2^k$ after all copies of the largest part. The parts before the largest parts are arranged in non-decreasing order and the parts after the largest parts are arranged in non-increasing order.

\begin{definition}
The semi-Pell partitions $\SP(n)$ are defined recursively by $\SP(0)=\emptyset$, 
 $\SP(1)=\{(1)\}$, and for $n\geq 1$, \begin{align*}\SP(2n)& =2\SP(2n),\\ 
\SP(2n+1)& =\add_{1,1}(\SP(2n)) \bigcup \SP(2n)\sqcup(1) \bigcup \SP(2n-1)\sqcup(1,1)\end{align*}
\end{definition}

 \begin{example} The table below shows the partitions in $\SP(n)$ for $n=1,\ldots,8$ written in compact form.
$$\begin{array}{c|c} n& \SP(n) \\ \hline 1 & 1 \\ 2 & 2\\ 3 & 3, 21, 111 \\ 4& 4 \\ 5 & 5,41,311,2111,11111 \\ 6&6,42,222 \\ 7 & 7,52,322,61,421,2221,511,4111,31111,211111,1111111 \\ 8& 8 \end{array} $$
 \end{example}
 
The semi-Pell numbers $\sp(n)$ enumerate $\SP(n)$. 
This is clear for even $n$; for odd $n\geq 3$, the three subsets in the definition of $\SP(n)$ are pairwise disjoint because their partitions contain no $1$s, one $1$, or more than one $1$, respectively. Hence, we have the following theorem. 

\begin{theorem}\label{SP-OB'} For $n\geq 0$, $$|\SP(n)|=|\overline\OB'(n)|.$$\end{theorem}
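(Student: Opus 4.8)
The plan is to establish $|\SP(n)| = \sp(n)$ by induction on $n$, and then invoke the generating-function identification made earlier in the section. The base cases $n=0,1$ (and a few more for safety) are checked directly against the table. For the inductive step, suppose $|\SP(m)| = \sp(m)$ for all $m < n$. If $n = 2t$ is even, then by definition $\SP(2t) = 2\SP(t)$, and since $\lambda \mapsto 2\lambda$ is injective on partitions, $|\SP(2t)| = |\SP(t)| = \sp(t) = \sp(2t)$, using the recurrence $\sp(2t) = \sp(t)$.

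The odd case is where the argument has content. Write $n = 2t+1$. By definition $\SP(2t+1)$ is the union of the three sets $\add_{1,1}(\SP(2t))$, $\SP(2t) \sqcup (1)$, and $\SP(2t-1) \sqcup (1,1)$. The first step is to verify that these three sets are pairwise disjoint: as the text observes just before the theorem statement, a partition in $\add_{1,1}(\SP(2t))$ has no part equal to $1$ (since every partition in $\SP(2t) = 2\SP(t)$ has only even parts, so adding $1$ to the largest part produces a partition whose smallest part is still even and positive, hence no $1$s — here one should note $\SP(2t)$ never contains the empty partition for $t \ge 1$, and handle $t=0$ separately against the table); a partition in $\SP(2t)\sqcup(1)$ has exactly one part equal to $1$ (again because $\SP(2t)$ has only even parts); and a partition in $\SP(2t-1)\sqcup(1,1)$ has at least two parts equal to $1$. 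The second step is a cardinality count: by injectivity of $\add_{1,1}$ and of $\sqcup (1)$ and $\sqcup(1,1)$ as maps on partitions, each of the three sets has the same cardinality as its source, so $|\SP(2t+1)| = 2|\SP(2t)| + |\SP(2t-1)| = 2\sp(2t) + \sp(2t-1) = \sp(2t+1)$ by the induction hypothesis and the recurrence.

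Having shown $|\SP(n)| = \sp(n)$ for all $n$, the theorem follows by combining this with the fact — established earlier in the section via the argument parallel to Section \ref{sec_st}, i.e. that $F(x) = \sum_{n \ge 0} \sp(n) x^n$ is also the generating function for $|\overline\OB'(n)|$ — which gives $\sp(n) = |\overline\OB'(n)|$ for all $n \ge 0$, and hence $|\SP(n)| = |\overline\OB'(n)|$.

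The only mildly delicate point, and the one I expect to be the main obstacle to a fully careful write-up, is the disjointness-and-injectivity bookkeeping in the odd case: one must be sure that $\add_{1,1}$ really is injective on $\SP(2t)$ (it is, because on a nonempty partition it is inverted by subtracting $1$ from the first part, and one needs $\SP(2t)$ to contain no partition whose largest part is $\le 1$, which holds since all parts are even and positive), and one must confirm the "number of $1$s" invariant — $0$, exactly $1$, at least $2$ — genuinely separates the three pieces for every $t$, including the small values $t=0,1$ where $\SP(2t)$ or $\SP(2t-1)$ may be empty or a singleton; those edge cases are absorbed into the explicit base-case verification against the table. Everything else is routine manipulation of the recurrence.
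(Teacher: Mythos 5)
Your proposal is correct and follows essentially the same route as the paper: the paper likewise establishes $|\SP(n)|=\sp(n)$ by noting the three pieces of $\SP(2n+1)$ are pairwise disjoint because their partitions contain no $1$s, exactly one $1$, or more than one $1$ respectively, and then combines this with the generating-function identification of $\sp(n)$ with $|\overline\OB'(n)|$. Your extra care about injectivity of $\add_{1,1}$ and the small-$t$ edge cases is sound but not a departure from the paper's argument.
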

The next theorem enables us to provide a combinatorial proof of Theorem \ref{SP-OB'}. 
\begin{theorem} \label{SP-bin}For $n \geq 0$, $\lambda \in \SP(n)$ if and only if $\bin(\lambda_1) \sqcup\lambda\setminus(\lambda_1)$ is an odd binary partition.
\end{theorem}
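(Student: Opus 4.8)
The plan is to establish both implications simultaneously by strong induction on $n$, writing $\Phi(\lambda)$ for $\bin(\lambda_1)\sqcup(\lambda\setminus(\lambda_1))$ whenever $\lambda$ is a nonempty partition. The base cases $n=0,1$ are clear: there is no nonempty partition of $0$, and $\SP(1)=\{(1)\}$ with $\Phi(1)=(1)$ odd binary. Two elementary identities will be used throughout: $\bin(2a)=2\bin(a)$ for $a\geq 1$, and $\bin(a+1)=\bin(a)\sqcup(1)$ whenever $a$ is even (equivalently $\bin(a-1)=\bin(a)\setminus(1)$ for $a$ even). I will also use the structural fact, immediate from the recursion $\SP(2m)=2\SP(m)$, that every partition in $\SP(2m)$ has only even parts, so in particular its largest part is even and, consequently, $\Phi(\mu)$ contains no part equal to $1$ for any $\mu\in\SP(2m)$.

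For even $n=2m$ ($m\geq 1$): if $\lambda\in\SP(2m)$ then $\lambda=2\mu$ with $\mu\in\SP(m)$, hence $\Phi(\lambda)=2\,\Phi(\mu)$; since doubling a binary partition preserves all part–multiplicities, the induction hypothesis makes $\Phi(\lambda)$ odd binary. Conversely, suppose $\Phi(\lambda)$ is odd binary with $|\lambda|=2m$. Then every part of $\lambda\setminus(\lambda_1)$ is forced to be a power of $2$. The only odd part that can occur in the binary partition $\Phi(\lambda)$ is $1$, so the number of $1$'s in $\Phi(\lambda)$ has the parity of $|\Phi(\lambda)|=2m$, i.e.\ is even; being odd binary, $\Phi(\lambda)$ therefore has no part equal to $1$. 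Since that number of $1$'s equals $(\lambda_1\bmod 2)+m_\lambda(1)-[\lambda_1=1]$, it follows that $\lambda_1$ is even and $m_\lambda(1)=0$; the remaining parts of $\lambda$ are then powers of $2$ exceeding $1$, hence even, so $\lambda=2(\lambda/2)$. Finally $\Phi(\lambda/2)=\tfrac12\Phi(\lambda)$ is again odd binary, so $\lambda/2\in\SP(m)$ by induction and $\lambda\in\SP(2m)$.

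For odd $n=2m+1$ ($m\geq 1$) the set $\SP(n)$ is the disjoint union $\add_{1,1}(\SP(2m))\cup(\SP(2m)\sqcup(1))\cup(\SP(2m-1)\sqcup(1,1))$, the three pieces being exactly the partitions of $n$ with $m_\lambda(1)=0$, $m_\lambda(1)=1$, and $m_\lambda(1)\geq 2$, respectively. The identities to record are: for $\mu\in\SP(2m)$ (whose largest part is even), $\Phi(\add_{1,1}(\mu))=\Phi(\mu)\sqcup(1)=\Phi(\mu\sqcup(1))$, and since $\Phi(\mu)$ has no $1$, adjoining a single $1$ yields an odd binary partition if and only if $\Phi(\mu)$ is odd binary; and for any nonempty $\nu$, $\Phi(\nu\sqcup(1,1))=\Phi(\nu)\sqcup(1,1)$, and adjoining two $1$'s to an odd binary partition keeps it odd binary. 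The forward implication is then immediate from the induction hypothesis applied to the relevant $\mu$ or $\nu$. For the reverse implication, given $\Phi(\lambda)$ odd binary, count the $1$'s in $\Phi(\lambda)$, which is an odd number since $|\Phi(\lambda)|=2m+1$: when $m_\lambda(1)=0$ this forces $\lambda_1$ odd, when $m_\lambda(1)=1$ it forces $\lambda_1$ even, and in each of the three cases one peels off the corresponding piece — subtract $1$ from $\lambda_1$, delete one $1$, or delete two $1$'s — checks via the same identities that the result again has $\Phi$ odd binary, applies the induction hypothesis to land in $\SP(2m)$ or $\SP(2m-1)$, and reassembles $\lambda$ in the matching subset of $\SP(2m+1)$.

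I expect the bookkeeping in the odd case to be the main obstacle. One must verify that after subtracting $1$ from $\lambda_1$ or deleting $1$'s the result is still a genuine partition with the same largest part — in the $m_\lambda(1)=0$ subcase this needs $\lambda_1$ strictly larger than the next part, which holds because $\lambda_1$ is odd while the remaining parts are powers of $2$ greater than $1$ — track the parity of the multiplicity of the part $1$ in $\Phi(\cdot)$ through each operation, and dispatch the partitions $(1^n)$ (the only ones with $\lambda_1=1$) by hand. Finally, $\Phi$ is genuinely two-to-one — it identifies $\add_{1,1}(\mu)$ with $\mu\sqcup(1)$ — but this causes no difficulty: the three subsets are disjoint as sets of partitions of $n$, which is exactly what lets the converse direction assign each admissible $\lambda$ to a unique one of them.
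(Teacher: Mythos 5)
Your proposal is correct and is exactly the argument the paper sketches in one line: induction on $n$ using $\bin(2k)=2\bin(k)$, $\bin(2k+1)=2\bin(k)\sqcup(1)$, and the fact that all parts of partitions in $\SP(2n)$ are even, with the three disjoint pieces of $\SP(2n+1)$ separated by the multiplicity of the part $1$. The only phrasing to tighten is the blanket claim that adjoining two $1$'s to an odd binary partition keeps it odd binary — as stated this fails when the partition has no part equal to $1$, so you should note explicitly (as your parity count of $1$'s already implies) that $\Phi(\nu)$ contains the part $1$ with odd multiplicity whenever $|\nu|$ is odd.
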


\begin{proof}
This can be proved by induction, using $\bin(2k)=2\bin(k)$, $\bin(2k+1)=2\bin(k)\sqcup (1)$, and the fact that all parts in partitions in $\SP(2n)$ are even.
\end{proof}
\begin{proof}[Combinatorial proof of Theorem \ref{SP-OB'}] Let $n\geq 0$. We define a bijection $\xi:\SP(n)\to \overline\OB'(n)$ as follows. If $\lambda\in SP(n)$, define $\xi(\lambda)\in \overline\OB'(n)$ to be the overpartition whose parts are the parts of $\bin(\lambda_1) \sqcup(\lambda\setminus(\lambda_1))=\bin(\lambda_1) \sqcup (\lambda_2, \lambda_3, \ldots)$ and the overlined parts are precisely the parts of $\bin(\lambda_1)$. The largest part in $\bin(\lambda_1)$ is at least $\lambda_2$ and therefore $\xi(\lambda)\in \overline\OB'(n)$. 

For the inverse, start with $\mu\in \overline \OB'(n)$. To obtain $\xi^{-1}(\mu)$, replace all overlined parts in $\mu$ by a part equal to their sum. That part is greater than or equal to any non-overlined part of $\mu$. By Theorem \ref{SP-bin}, the resulting partition is in $\SP(n).$
\end{proof}

\subsection{A Congruence Connecting $\sp(n)$ to the Paper-folding Sequence}\label{paper_folding}

Next we consider another sequence related to $\sp(n)$. 
Let $z(n)$ be sequence \href{https://oeis.org/A014707}{A014707} in \cite{OEIS} (the paper-folding sequence). It is defined for $n\geq 0$ by $z(4n)=0, z(4n+2)=1$, and $z(2n+1)=z(n)$. Note that $z(n)\in \{0,1\}$ for all $n\geq 0$. 
We have the following parity result.

\begin{theorem} If $n\geq 1$, we have $$\frac{\sp(n)-1}{2}\equiv z(n-1). $$
\end{theorem}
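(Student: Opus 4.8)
The plan is to prove the congruence $\tfrac{\sp(n)-1}{2}\equiv z(n-1)\pmod 2$ by strong induction on $n$, working in parallel with the two defining recurrences: the self-similar recurrence for $\sp$ and the paper-folding recurrence $z(4n)=0$, $z(4n+2)=1$, $z(2n+1)=z(n)$. Since $\sp(n)$ is odd for $n\geq 1$, the quantity $g(n):=\tfrac{\sp(n)-1}{2}$ is a well-defined integer, and it is natural to split into residue classes of $n$ modulo $4$, matching the way $z$ is defined. I would first record the base cases $n=1,2,3$ (from the table, $\sp(1)=1,\sp(2)=1,\sp(3)=3$, so $g(1)=0,g(2)=0,g(3)=1$, against $z(0)=0,z(1)=z(0)=0,z(2)=1$), and then handle the general step.

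First I would dispose of the even case: if $n=2m$ with $m\geq 1$, then $\sp(2m)=\sp(m)$, so $g(2m)=g(m)$; on the other side, $n-1=2m-1$ is odd, so $z(2m-1)=z(m-1)$. Hence the claim for $n=2m$ reduces to $g(m)\equiv z(m-1)$, which is the induction hypothesis at $m<n$. The substance is the odd case $n=2k+1$. Here $\sp(2k+1)=2\sp(2k)+\sp(2k-1)=2\sp(k)+\sp(2k-1)$, so
\begin{align*}
g(2k+1)=\frac{2\sp(k)+\sp(2k-1)-1}{2}=\sp(k)+g(2k-1)\equiv \sp(k)+g(2k-1)\pmod 2.
\end{align*}
Since $\sp(k)$ is odd for $k\geq 1$ (and $\sp(0)=0$), we get $g(2k+1)\equiv 1+g(2k-1)$ for $k\geq 1$, i.e.\ the sequence $g$ along odd indices flips parity at each step once $k\geq 1$; for $k=0$, $g(1)=0$. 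I would then subdivide according to whether $k$ is even or odd, i.e.\ whether $n=2k+1\equiv 1$ or $\equiv 3\pmod 4$, and compare with $z$. For $n=4j+1$ we want $g(4j+1)\equiv z(4j)=0$, and for $n=4j+3$ we want $g(4j+3)\equiv z(4j+2)=1$; using the relation $g(2k+1)\equiv 1+g(2k-1)$ repeatedly, both reduce (after tracking the parity flips back down to a smaller odd or even index) to an instance of the induction hypothesis or to a base case. Concretely, $g(4j+3)\equiv 1+g(4j+1)$, so the two desired statements $g(4j+1)\equiv 0$ and $g(4j+3)\equiv 1$ are equivalent to each other, and it suffices to establish $g(4j+1)\equiv 0$; and $g(4j+1)\equiv 1+g(4j-1)=1+g(2(2j-1)+1)\equiv 1+1+g(2(2j-1)-1)=g(4j-3)$, which closes the induction on $j$ together with the base case $g(1)=0$.

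The main obstacle I anticipate is bookkeeping the parity flips cleanly: the relation $g(2k+1)\equiv 1+g(2k-1)$ holds only for $k\geq 1$, so one must be careful near the bottom of the recursion (the index $k=0$, giving $g(1)=0$ rather than $g(1)\equiv 1+g(-1)$), and one must make sure the descent $4j+1\mapsto 4j-3$ stays within the range where the induction hypothesis applies. A cleaner packaging, which I would adopt if the case analysis gets unwieldy, is to prove the single equivalent statement $g(2k+1)\equiv g(2k-1)+1$ for $k\geq 1$ together with $g(2m)=g(m)$, and separately check that $z$ satisfies the mirror identities $z(2k)\equiv z(2k-2)+1$ for $k\geq 1$ (which follows from $z(4n)=0$, $z(4n+2)=1$, and $z(2n+1)=z(n)$) and $z(2m-1)=z(m-1)$; then both $n\mapsto g(n)$ and $n\mapsto z(n-1)$ satisfy the same first-order recursions and agree on the base cases, so they agree everywhere. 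This reduces the whole proof to verifying the two recursions for $z$, which is a short direct computation, plus the base cases.
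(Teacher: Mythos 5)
Your proposal is correct and follows essentially the same route as the paper: induction on $n$, with the even case reduced via $\sp(2m)=\sp(m)$ and $z(2m-1)=z(m-1)$, and the odd case via the derived relation $g(2k+1)\equiv g(2k-1)+1$ matched against the parity alternation $z(2k)\equiv z(2k-2)+1$ at even arguments. Your ``cleaner packaging'' at the end is precisely the paper's argument recast as both sides satisfying the same first-order recurrences with the same initial value.
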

\begin{proof}We prove the theorem by induction. Let $b(n)=\displaystyle \frac{\sp(n)-1}{2}$. Since $b(1)=b(2)=0$ and $z(0)=z(1)=0$, the theorem holds for $n=1,2$. Let $n$ be a positive integer and assume the $b(m)\equiv z(m-1)$ for all $m<n$.

If $n=2t$ is even, using the recurrence for the semi-Pell sequence and the induction hypothesis, we have \begin{align*}b(n)=b(2t)& =b(t) \equiv z(t-1). \end{align*} On the other hand, using the recurrence for $z(n)$, we have $z(t-1)=z(2(t-1)+1)=z(2t-1)=z(n-1)$. Hence, $b(n)\equiv z(n-1)$.

If $n=2t+1$ is odd, we have \begin{align*} b(2t+1)& =\frac{\sp(2t+1)-1}{2}\\ & = \frac{2\sp(2t)+\sp(2t-1)-1}{2}\\ & = \frac{2\sp(2t)-2+\sp(2t-1)-1+2}{2}\\ & = 2b(2t)+b(2t-1)+1\\ & \equiv z(2t-2)+1\\ & \equiv z(2t).\end{align*} For the last line we used the fact that $z(2t-2)$ and $z(2t)$ have opposite parity. 

\end{proof}

\subsection{Total Number of Parts in $\SP(n)$}

For $n \geq 1$, let $\psp(n)$ be the total number of parts in $\SP(n)$. 
From the initial condition and recurrences for $\SP(n)$, $\psp(1) =1$, $\psp(2)=1$,  and for $n\geq 1$, 
\begin{align} \notag \psp(2n)& =\psp(n),\\ \label{pspnum} \psp(2n+1)& =\sp(2n)+2\psp(2n)+2\sp(2n-1)+\psp(2n-1).\end{align}

The following table shows the values of the sequence $\psp(n)$ for $n = 1, \ldots, 12$.

$$\begin{array}
{c|cccccccccccc}
n & 1 & 2 & 3 & 4 & 5 & 6 & 7 & 8 & 9 & 10 & 11 & 12 \\
\hline
\psp(n) & 1 & 1 & 6 & 1 & 15 & 6 & 40 & 1 & 65 & 15 & 126 & 6
\end{array} $$

\begin{proposition}
For $n\geq0$, $\psp(2n+1)\equiv 1$ if and only if $n\equiv 0$.
\end{proposition}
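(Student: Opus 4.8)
The plan is to prove the statement $\psp(2n+1)\equiv 1 \pmod 2$ if and only if $n\equiv 0\pmod 2$ by induction on $n$, working directly from the recurrence \eqref{pspnum} together with the fact (noted in the excerpt) that $\sp(k)$ is odd for every $k\geq 1$. Reducing \eqref{pspnum} modulo $2$ and using that $\sp(2n)$ and $\sp(2n-1)$ are both odd whenever $n\geq 1$, the $2\psp(2n)$ and $2\sp(2n-1)$ terms vanish, so we get
\begin{equation*}\psp(2n+1)\equiv \sp(2n)+\psp(2n-1)\equiv 1+\psp(2n-1)\pmod 2,\end{equation*}
valid for $n\geq 1$. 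Thus the parity of $\psp(2n+1)$ flips with each increment of $n$ in that range, and the base case $\psp(1)=1$ (odd, $n=0$ even) together with $\psp(3)=6$ (even, $n=1$ odd) pins down the pattern: $\psp(2n+1)$ is odd precisely when $n$ is even.

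More carefully, first I would record the reduction above as the key recurrence. Then I would handle the case $n=0$ separately, since \eqref{pspnum} is stated for $n\geq 1$: directly, $\psp(1)=1$ is odd and $0$ is even, so the equivalence holds. For $n\geq 1$, induction on $n$ using $\psp(2n+1)\equiv 1+\psp(2n-1)$: if the claim holds for $n-1$, i.e. $\psp(2(n-1)+1)=\psp(2n-1)$ is odd iff $n-1$ is even iff $n$ is odd, then $\psp(2n+1)\equiv 1+\psp(2n-1)$ is odd iff $\psp(2n-1)$ is even iff $n$ is even, which is exactly the claim for $n$. The base case $n=1$ is $\psp(3)=6\equiv 0$, and $1$ is odd, so the equivalence holds.

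The only subtlety — and it is minor — is making sure the terms dropped in the mod-$2$ reduction really do vanish for all relevant $n$: $2\psp(2n)$ and $2\sp(2n-1)$ are even trivially, and $\sp(2n)$ is odd because $\sp(k)$ is odd for all $k\geq 1$ and $2n\geq 2$. I would state this observation explicitly (it is the one genuine input beyond bookkeeping), citing the remark in the excerpt that $\sp(n)$ is odd for all $n\geq 1$. There is no real obstacle here; the proof is a one-line reduction plus a two-line induction. If one wanted a non-inductive phrasing, one could instead observe that $\psp(2n+1)-\psp(1)\equiv \sum_{j=1}^{n}\bigl(\psp(2j+1)-\psp(2j-1)\bigr)\equiv \sum_{j=1}^n 1 = n\pmod 2$, so $\psp(2n+1)\equiv 1+n\pmod 2$, which is odd iff $n$ is even; I would present whichever version reads more cleanly alongside the surrounding propositions.
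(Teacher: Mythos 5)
Your proof is correct and follows the route the paper intends: the paper omits the proof of this proposition, but its proof of the very next theorem uses exactly your key reduction $\psp(2n+1)\equiv \psp(2n-1)+1$, which rests on $\sp(2n)$ being odd. Your base cases and induction (or the equivalent telescoping to $\psp(2n+1)\equiv 1+n$) check out against the tabulated values.
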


Recall from subsection \ref{paper_folding} that $z(n)$ is the paper-folding sequence.

\begin{theorem}
For $n \geq 1$, $\psp(n) \equiv z(n-1) + 1$.
\end{theorem}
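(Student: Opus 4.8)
The plan is to prove the congruence by induction on $n$, reducing the recurrence \eqref{pspnum} modulo $2$ and using two auxiliary facts: that $\sp(m)$ is odd for every $m\ge 1$ (noted just before \eqref{pspnum}'s derivation), and the elementary observation that for the paper-folding sequence $z(2t-2)$ and $z(2t)$ always take opposite values in $\{0,1\}$ — which is immediate from the defining rules $z(4k)=0$ and $z(4k+2)=1$, since exactly one of $2(t-1),2t$ is divisible by $4$.

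First I would settle the base cases $n=1,2$ directly: $\psp(1)=\psp(2)=1$, while $z(0)+1=1$ and $z(1)+1=z(0)+1=1$. For the inductive step with $n\ge 3$, split on the parity of $n$. If $n=2t$ is even (so $t\ge 2$), then $\psp(n)=\psp(t)$ and, using $z(2m+1)=z(m)$, $z(n-1)=z(2t-1)=z(t-1)$; the claim for $n$ then follows at once from the inductive hypothesis applied to $t<n$.

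If $n=2t+1$ is odd with $t\ge 1$, I would reduce \eqref{pspnum} modulo $2$: the two terms carrying a coefficient of $2$ vanish, leaving $\psp(2t+1)\equiv\sp(2t)+\psp(2t-1)$. Since $\sp(2t)=\sp(t)$ is odd, this gives $\psp(2t+1)\equiv 1+\psp(2t-1)$, and applying the inductive hypothesis to $2t-1<n$ (note $2t-1\ge 1$) yields $\psp(2t+1)\equiv 1+z(2t-2)+1\equiv z(2t-2)$. It remains to verify $z(2t-2)\equiv z(2t)+1=z(n-1)+1$, i.e. that $z(2(t-1))$ and $z(2t)$ differ; this is the auxiliary observation above, since $t-1$ and $t$ have opposite parity.

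I do not expect a serious obstacle here: the argument is routine parity bookkeeping. The only point needing care is keeping the shifted indices ($t-1$ in the even case, $2t-1$ in the odd case) within the range $\ge 1$ where both the inductive hypothesis and the oddness of $\sp$ are available, which is exactly why $n=1,2$ must be handled as base cases. As an alternative one could route everything through the earlier identity $\tfrac{\sp(n)-1}{2}\equiv z(n-1)$, but the direct computation above is shorter.
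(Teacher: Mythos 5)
Your proof is correct and follows essentially the same route as the paper: induction on $n$, the even case via $\psp(2t)=\psp(t)$ and $z(2t-1)=z(t-1)$, and the odd case via $\psp(2t+1)\equiv\psp(2t-1)+1$ (from the oddness of $\sp(2t)$) together with the fact that $z(2t-2)$ and $z(2t)$ differ. Your base-case values $\psp(1)=\psp(2)=1$ and $z(0)=0$ are in fact the correct ones, fixing a typo in the paper's own statement of the base case.
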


\begin{proof}
Since $\sp(2n)$ is odd, we have $$\psp(2n+1)\equiv \psp(2n-1)+1.$$ We prove the theorem by induction. 
Since $z(0)=z(1)=1$ and $\psp(1)=\psp(2)=0$, the statement holds for $n=1,2$. 
Assume $\psp(k) \equiv z(k-1) + 1$ for all $k<n$. 
If $n$ is even, i.e., $n=2t$ for some $t\in \mathbb N$, we have \begin{align*}\psp(n)= \psp(2t)=\psp(t)& \equiv z(t-1)+1 \\ & = z(2t-1)+1 \\ & =z(n-1)+1. \end{align*} 
If $n$ is odd, i.e., $n=2t+1$ for some $t\in \mathbb N$, we have \begin{align*}\psp(n)= \psp(2t+1)\equiv \psp(2t-1)+1 & \equiv z(2t-2) \\ & \equiv z(2t)+1 \\ & = z(n-1)+1. \end{align*} Above, we used again the fact that $z(2t-2)$ and $z(2t)$ have opposite parity. 
\end{proof}

\section{Semi-Narayana cow partitions}\label{sec_snc}

\begin{definition}The semi-Narayana cows sequence $\{\snc(n)\}_{n\geq 0}$ is defined recursively  by 
$\snc(0)=\snc(1)=1$, and for $n\geq 1$, \begin{align*}\snc(2n)&=\snc(n),\\ \snc(2n+1)&=\snc(2n)+\snc(2n-2).\end{align*} \end{definition}

 The following table shows the values of the semi-Narayana cows sequence $\snc(n)$ for $n = 0, 1, \ldots, 12$.
$$\begin{array}
{c|ccccccccccccc}
n & 0 & 1 & 2 & 3 & 4 & 5 & 6 & 7 & 8 & 9 & 10 & 11 & 12 \\
\hline
\snc(n) & 1 & 1 & 1 & 2 & 1 & 2 & 2 & 3 & 1 & 3 & 2 & 3 & 2
\end{array} $$

This is the sequence \href{https://oeis.org/A120562}{A120562} in \cite{OEIS}. The original sequence is Narayana's cows sequence \href{https://oeis.org/A000930}{A000930} in \cite{OEIS}.

This is the sequence $\{f(n)\}_{n\geq 0}$ in Section \ref{sec_gen} with 
 $a_0=a_1=1$, $c_1=1$, $c_2=0$, and $c_3=1$, Thus, we are in Case 2. and \eqref{small} becomes $$F(x)=\prod_{r=0}^\infty (1+x^{2^r}+x^{3\cdot 2^r}).$$

This is the generating function for $|\OB_{1,3}(n)|$, where $\OB_{1,3}(n)$ is the set of odd binary partitions of $n$ in which parts appear one or three times. For example, $\OB_{1,3}(7)=\{4+2+1,\, 4+1+1+1,\, 2+2+2+1\}$

\begin{definition}Semi-Narayana cow partitions are defined recursively as $\SC(0)=\{(\ )\}$, $\SC(1)=\{(1)\}$, and for $n\geq 1$, \begin{align*}\SC(2n)& =2\SC(n),\\ \SC(2n+1)& =\SC(2n)\sqcup(1) \bigcup \SC(2n-2)\sqcup(3).\end{align*} \end{definition} Thus $|\SC(n)|=\sc(n)$ for all $n\geq 0$. 

 \begin{example} The table below shows the partitions in $\SNc(n)$ for $n=1,\ldots,12$.
$$\begin{array}{c|c} n& \SNc(n) \\ \hline 1 & 1 \\ 2 & 2\\ 3 & 2+1,3 \\ 4& 4 \\ 5 & 4+1,3+2 \\ 6&4+2,6 \\ 7 & 4+2+1,6+1,4+3 \\ 8& 8 \\9& 8+1,4+3+2,6+3\\10& 8+2,6+4\\11& 8+2+1,6+4+1,8+3 \\12& 8+4,12 \end{array} $$
 \end{example}

 Next, we give a non-recursive description of semi-Narayana cow partitions. 
 \begin{theorem}
     For $n\geq 0$, $\SC(n)$ is the set of partitions of $n$ whose parts are of the form $2^k$ or $3\cdot 2^k$ for some $k\geq 0$ and have distinct $2$-adic valuations. 
 \end{theorem}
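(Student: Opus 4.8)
The plan is to prove the non-recursive characterization by induction on $n$, mirroring the structure of the proof of Theorem \ref{T-st}. Let $\mathcal{G}(n)$ denote the set of partitions of $n$ whose parts all have the form $2^k$ or $3\cdot 2^k$ and whose parts have pairwise distinct $2$-adic valuations. The goal is to show $\SC(n)=\mathcal{G}(n)$ for all $n\geq 0$. First I would check the base cases $n=0,1,2$ by direct inspection (and perhaps $n=3$ to be safe), noting that for $n=0$ both sides are $\{(\ )\}$ and for $n=1,2$ both sides are $\{(1)\}$ and $\{(2)\}$ respectively.

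For the inductive step I would split on the parity of $n$. If $n=2t$, then by the recursion $\SC(2t)=2\SC(t)$; by induction $\SC(t)=\mathcal{G}(t)$, and I would observe that doubling is a bijection $\mathcal{G}(t)\to\mathcal{G}(2t)$ — doubling each part sends $2^k\mapsto 2^{k+1}$ and $3\cdot 2^k\mapsto 3\cdot 2^{k+1}$, preserving the ``power of $2$ or $3$ times a power of $2$'' shape and shifting each $2$-adic valuation up by one, hence preserving distinctness; conversely every partition in $\mathcal{G}(2t)$ has all parts even (the only way to have $2$-adic valuation $0$ is a part equal to $1$ or $3$, and one such part forces $n$ odd — wait, $3$ has valuation $0$ too, so I must note that a partition in $\mathcal{G}(2t)$ cannot contain \emph{either} a $1$ or a $3$ since that is the unique allowed part of valuation $0$ and it is odd, making the total odd), so it is $2\mu$ for a unique $\mu\in\mathcal{G}(t)$. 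If $n=2t+1$, then $\SC(2t+1)=\SC(2t)\sqcup(1)\,\cup\,\SC(2t-2)\sqcup(3)$. By induction these are $\mathcal{G}(2t)\sqcup(1)$ and $\mathcal{G}(2t-2)\sqcup(3)$. A partition in $\mathcal{G}(2t+1)$ must have exactly one part of $2$-adic valuation $0$, and that part is either $1$ or $3$; removing it (and subtracting its size) lands in $\mathcal{G}(2t)$ or $\mathcal{G}(2t-2)$ respectively, since all remaining parts have positive valuation and the shape/distinctness conditions are inherited. Conversely, appending a $1$ to a partition in $\mathcal{G}(2t)$, or a $3$ to one in $\mathcal{G}(2t-2)$, produces a partition in $\mathcal{G}(2t+1)$: the new part has valuation $0$, all old parts have positive valuation (again because their unique valuation-$0$ option is odd, forcing the old sum even), so distinctness is maintained and the shape condition clearly holds. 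The two cases are disjoint since one has a part $1$ and the other does not (a partition with a $3$ of valuation $0$ cannot also have a $1$).

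The main obstacle, and the point requiring the most care, is the bookkeeping around valuation $0$: unlike in the semi-tribonacci and semi-Padovan cases where the only allowed valuation-$0$ part is $1$, here both $1$ and $3$ are legitimate valuation-$0$ parts, so I must repeatedly use the fact that a partition in $\mathcal{G}(m)$ has \emph{at most one} valuation-$0$ part (by the distinctness condition) and that its presence forces $m$ odd. This is what makes the even case clean (no valuation-$0$ part at all) and makes the decomposition in the odd case well-defined and the union disjoint. Everything else is routine verification that the shape condition ``each part is $2^k$ or $3\cdot2^k$'' and the distinct-valuations condition are each preserved under doubling, under halving (when applicable), and under adjoining/removing a single part of valuation $0$; I would state these preservation facts explicitly and then assemble them as above. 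No separate enumeration argument is needed since $|\SC(n)|=\snc(n)$ is already established.
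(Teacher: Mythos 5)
Your proof is correct and follows the same route the paper intends (the paper states only ``proved by induction'' and omits the details): induction on $n$ via the recursive definition, with the key observation that a partition satisfying the non-recursive conditions has at most one part of $2$-adic valuation $0$ (necessarily $1$ or $3$), present exactly when $n$ is odd. Your careful handling of the valuation-$0$ bookkeeping and the disjointness of the two branches in the odd case fills in exactly the details the paper leaves out.
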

 \begin{proof}
     The theorem is proved by induction. We omit the details. 
 \end{proof}

\begin{corollary} For $n\geq 0$, $\SC(n)\subseteq \SF(n)$.
    \end{corollary}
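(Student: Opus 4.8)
The plan is to combine the non-recursive description of semi-Narayana cow partitions just established with the known non-recursive description of $\SF(n)$ recalled in Section~\ref{prelim}, namely that $\SF(n)$ is the set of partitions of $n$ whose parts have distinct $2$-adic valuations. So the inclusion $\SC(n)\subseteq\SF(n)$ amounts to checking that every partition in $\SC(n)$ already satisfies that defining property of $\SF(n)$.

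First I would invoke the preceding theorem: a partition $\lambda\in\SC(n)$ has all parts of the form $2^k$ or $3\cdot 2^k$, and these parts have distinct $2$-adic valuations. The only thing to observe is that $\val_2(2^k)=k$ and $\val_2(3\cdot 2^k)=k$ as well, since $3$ is odd; hence the hypothesis ``distinct $2$-adic valuations'' in the theorem is exactly the condition ``distinct $2$-adic valuations of parts'' that characterizes $\SF(n)$. Therefore $\lambda\in\SF(n)$, and the inclusion follows immediately.

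There is essentially no obstacle here: the work was already done in proving the non-recursive description of $\SC(n)$ and in recalling (from \cite[Theorem 4]{AMN}) the non-recursive description of $\SF(n)$. The one point worth stating explicitly is that passing from ``part of the form $3\cdot 2^k$'' to ``$2$-adic valuation $k$'' requires only that $3$ is odd, which is why the restriction of part shapes to $\{2^k, 3\cdot 2^k\}$ does not introduce any new $2$-adic valuation beyond those already forced to be distinct. One could alternatively give a short direct induction mirroring the recursive definitions of $\SC(n)$ and $\SF(n)$, using that doubling all parts shifts every $2$-adic valuation up by one and that adjoining a single part equal to $1$ or $3$ introduces a new part of valuation $0$ (which is new precisely because the even cases produce partitions with no parts of valuation $0$); but the argument via the two non-recursive descriptions is cleaner and I would present that.
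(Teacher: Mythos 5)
Your proposal is correct and is exactly the argument the paper intends: the corollary is stated immediately after the non-recursive description of $\SC(n)$, and the inclusion follows because the "distinct $2$-adic valuations" condition there is precisely the characterization of $\SF(n)$ from \cite{AMN}. The observation that $\val_2(3\cdot 2^k)=k$ is a fine (if nearly tautological) clarifying remark.
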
 

\begin{remark}For $n\geq 0$, the function $\rep_2$ is a bijection between $\SC(n)$ and the set of odd binary partitions of $n$ in which parts appear once or three times.\end{remark}

\subsection{Parity Result for $\snc(n)$}

\begin{theorem} \label{snc-parity}
For $n \geq 1$, $\snc(n) \equiv 0$ if and only if $n \equiv 3, 5, 6 \pmod 7$.
\end{theorem}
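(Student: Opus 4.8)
The plan is to prove the congruence $\snc(n)\equiv 0 \pmod 2$ if and only if $n\equiv 3,5,6 \pmod 7$ by the same strategy used for Theorem \ref{cong_spa}: show by complete induction that the parity pattern of $\snc(n)$ is periodic modulo $7$, with the period block $\{\snc(7n+k)\}_{k=0,\dots,6}\equiv(1,1,1,0,1,0,0)$ (reading off $n=0,\dots,6$ from the table, which gives $1,1,1,2,1,2,2$). Once this is established, the positions where the entry is $0$ are exactly $k=3,5,6$, which is the claimed statement.

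First I would set up the base case by computing the parities of $\snc(n)$ for enough consecutive values — at least two full period-blocks, i.e. $n=0,\dots,13$ — and checking that both blocks reduce to $(1,1,1,0,1,0,0)$ modulo $2$. Then, as in the proof of Theorem \ref{cong_spa}, I would work with $14$ consecutive values at a time: assuming the parities of $\snc(14m+k)$ for $k=0,\dots,13$ are known for all $m<M$, I compute $\{\snc(14M+k)\}_{k=0,\dots,13}$. The seven even arguments are immediate from $\snc(2n)=\snc(n)$, since $\snc(14M+2j)=\snc(7M+j)$, so these seven values reduce (by induction on the smaller index $7M+j<14M$) to the period block $y:=(1,1,1,0,1,0,0)$. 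The seven odd arguments $\snc(14M+2j-1)$ are handled one at a time using the recurrence $\snc(2n+1)=\snc(2n)+\snc(2n-2)$ modulo $2$, i.e. $\snc(14M+2j-1)=\snc(14M+2j-2)+\snc(14M+2j-4)$, expressing each in terms of already-computed even-argument values (which lie in the previous or current block). Interleaving the resulting odd-index vector $x$ with $y$ should give $(y,y)$, completing the induction step.

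The one point requiring a little care — and the main obstacle — is the bookkeeping at the block boundary: for the smallest odd argument $\snc(14M+1)=\snc(14M)+\snc(14M-2)=\snc(14(M-1)+14)+\snc(14(M-1)+12)$, the terms $\snc(14M)=\snc(7M)$ reduces via halving while $\snc(14M-2)$ belongs to the block indexed by $M-1$; one has to make sure every term invoked is genuinely covered by the induction hypothesis (either a strictly smaller halved index or an index in a strictly earlier $14$-block). This is exactly the same boundary issue that arises in the proof of Theorem \ref{cong_spa}, and it is resolved the same way: process the odd arguments in increasing order of their index so that each recurrence call refers only to even-indexed values, and each such even-indexed value either halves to a smaller index or sits in the block for $M-1$ or the already-determined even entries of block $M$. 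I would then record the computation in a short table of the seven values of $2j-1$, the two even indices fed into the recurrence, and the resulting parity, mirroring the table in the proof of Theorem \ref{cong_spa}, and conclude that the period block is reproduced.
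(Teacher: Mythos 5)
Your proposal is correct and is exactly the argument the paper intends: the paper omits the proof, stating only that it is "similar to that of Theorem \ref{cong_spa}," and your mod-$7$ parity block $(1,1,1,0,1,0,0)$ with the $14$-values-at-a-time complete induction is that argument. If anything, your worry about the block boundary is overcautious here, since the recurrence $\snc(2n+1)=\snc(2n)+\snc(2n-2)$ involves only even arguments, so every odd-indexed value reduces immediately to two halved indices covered by the induction hypothesis, with no sequential dependence among the odd entries.
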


\begin{proof}
We omit the proof by induction, which is similar to that of Theorem \ref{cong_spa}.
\end{proof}

\subsection{Total Number of Parts in $\SNc(n)$}

Let $\psnc(n)$ be the total number of parts in $\SNc(n)$.

From the initial conditions and recurrences for $\SNc(n)$,
the sequence $\psnc(n)$ can be defined directly by $\psnc(0)=0$, $\psnc(1)=1$ and, for $n\geq 1$, 
\begin{align*}
\psnc(2n)& =\psnc(n),\\ \psnc(2n+1)&=\psnc(2n)+\psnc(2n-2)+\snc(2n+1).
\end{align*}

The following table shows the values of the sequence $\psnc(n)$ for $n = 0,1, \ldots, 12$.

$$\begin{array}
{c|cccccccccccccc}
n & 0 & 1 & 2 & 3 & 4 & 5 & 6 & 7 & 8 & 9 & 10 & 11 & 12 \\
\hline
\psnc(n) & 0 & 1 & 1 & 3 & 1 & 4 & 3 & 7 & 1 & 7 & 4 & 8 & 3
\end{array} $$

\begin{proposition} 
For $n\geq0$, $$\psnc(2n+1) + \psnc(2n+5) + \psnc(4n+1) + \psnc(4n+9) + \psnc(8n+1) \equiv 0.$$
\end{proposition}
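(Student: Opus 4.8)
The plan is to prove the congruence by complete induction on $n$, reducing every term $\psnc(2n+1)$, $\psnc(2n+5)$, $\psnc(4n+1)$, $\psnc(4n+9)$, $\psnc(8n+1)$ to a combination of $\psnc$-values at smaller even arguments (which halve via $\psnc(2m)=\psnc(m)$) plus $\snc$-terms, and then to cancel the $\psnc$-part using the induction hypothesis and the $\snc$-part using the known parity of $\snc(n)$ from Theorem~\ref{snc-parity}. First I would record the mod-$2$ simplification of the odd-index recurrence, namely $\psnc(2n+1)\equiv\psnc(2n)+\psnc(2n-2)+\snc(2n+1)$, and the companion recurrence $\snc(2n+1)\equiv\snc(2n)+\snc(2n-2)$; these are the only tools needed besides the even-index halving rule.

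Next I would expand each of the five terms. The term $\psnc(2n+1)$ expands in one step to $\psnc(2n)+\psnc(2n-2)+\snc(2n+1)=\psnc(n)+\psnc(n-1)+\snc(2n+1)$. The term $\psnc(2n+5)=\psnc(2(n+2)+1)$ expands to $\psnc(n+2)+\psnc(n+1)+\snc(2n+5)$. The remaining three terms $\psnc(4n+1)$, $\psnc(4n+9)$, $\psnc(8n+1)$ each sit at an odd argument and must be peeled down: $\psnc(4n+1)\equiv\psnc(4n)+\psnc(4n-2)+\snc(4n+1)=\psnc(n)+\psnc(2n-1)+\snc(4n+1)$, and $\psnc(2n-1)$ peels once more, while $\psnc(4n+9)$ and $\psnc(8n+1)$ require two or three successive applications of the odd recurrence before every $\psnc$-term lands on an even argument that can be halved. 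The bookkeeping here is the bulk of the work: after full expansion, one collects all the $\psnc$-contributions and all the $\snc$-contributions separately.

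The $\psnc$-side should, after cancellation in characteristic $2$, reduce either to $0$ or to an instance of the very identity being proved at a smaller value of $n$ (or to a shorter auxiliary $\psnc$-congruence that one proves alongside it by the same induction) — this is the step that determines whether a single clean induction closes or whether one needs to carry a small system of companion congruences. The $\snc$-side is a fixed $\mathbb{Z}/2$-linear combination of values $\snc(2n+1),\snc(2n+5),\snc(4n+1),\snc(4n+9),\snc(8n+1)$ together with any $\snc$-terms generated while peeling; by Theorem~\ref{snc-parity} the parity of each depends only on the residue of its argument modulo $7$, so I would tabulate these residues as functions of $n\bmod 7$ (seven rows) and check that the combination vanishes in every row, exactly as in the proof of Theorem~\ref{cong_spa} and its corollaries. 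I expect the main obstacle to be identifying the correct closed form of the $\psnc$-side after expansion: getting it to collapse to $0$ or to a self-similar copy of the statement may force the introduction of an auxiliary congruence (a $\psnc$-analogue of $\snc(2n+1)+\snc(2n+5)\equiv\cdots$) that must be proved simultaneously; once the right set of companion identities is in hand, the verification is the routine combination of the mod-$2$ recurrences, the halving rule, the induction hypothesis, and the seven-case residue table.
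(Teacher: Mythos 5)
Your plan coincides with the paper's (very terse) proof, which likewise just invokes the recurrence for $\psnc(2n+1)$ together with Theorem~\ref{snc-parity}, and the hedge at your critical step resolves in the simplest possible way: after peeling each odd argument down via $\psnc(2m+1)\equiv\psnc(m)+\psnc(m-1)+\snc(2m+1)$ and halving, every $\psnc$-value occurs an even number of times, so the $\psnc$-side vanishes identically and no induction or companion congruence is needed. What remains is $\snc(2n+1)+\snc(2n+3)+\snc(2n+5)+\snc(4n-1)+\snc(4n+1)+\snc(4n+9)+\snc(8n+1)\equiv 0$, which your seven-row residue table (using that $\snc(m)$ is odd iff $m\equiv 0,1,2,4\pmod 7$) verifies, with the cases $n=0,1$ checked directly.
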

\begin{proof} 
The proof uses the recurrence for $\psnc(2n+1)$ and Theorem \ref{snc-parity}.
\end{proof}

\section{Delayed Semi-Fibonacci partitions} \label{sec_dsf}

Let $\dsf(n)$ be the delayed semi-Fibonacci sequence, which uses the same recurrences as the semi-Fibonacci sequence but with different initial conditions. 
\begin{definition}The delayed semi-Fibonacci sequence, $\{\dsf(n)\}_{n\geq 0}$, is defined recursively by $\dsf(0)=\dsf(1)=\dsf(2)=0, \dsf(3)=1$ and for $n\geq 2$, \begin{align*}\dsf(2n)&=\dsf(n),\\ \dsf(2n+1)&=\dsf(2n)+\dsf(2n-1).\end{align*}\end{definition}

The following table shows the values of the delayed semi-Fibonacci sequence $\dsf(n)$ for $n = 0, 1, \ldots, 12$.

$$\begin{array}
{c|ccccccccccccc}
n & 0 & 1 & 2 & 3 & 4 & 5 & 6 & 7 & 8 & 9 & 10 & 11 & 12 \\
\hline
\dsf(n) & 0 & 0 & 0 & 1 & 0 & 1 & 1 & 2 & 0 & 2 & 1 & 3 & 1
\end{array} $$

Let $\OB_1(n)$ be the set of odd binary partitions of $n$ such that the largest part must be repeated. 
For example, $\OB_1(7)=\{2 + 2 + 2 + 1, \, 1 + 1 + 1 + 1 + 1 + 1 + 1\}.$

\begin{theorem} \label{dsf} For $n\geq 0$, $|\OB_1(n)|=\dsf(n)$. 
\end{theorem}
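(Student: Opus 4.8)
\textbf{Proof proposal for Theorem \ref{dsf}.} The plan is to find the generating function for $|\OB_1(n)|$ and show it equals the generating function for $\dsf(n)$ obtained from the general analysis of Section \ref{sec_gen}. First I would identify the parameters: the delayed semi-Fibonacci sequence uses the semi-Fibonacci recurrence at odd arguments, so $c_1=c_2=1$, $c_3=0$, but the initial conditions are shifted. Because $\dsf(0)=\dsf(1)=\dsf(2)=0$ and $\dsf(3)=1$, one cannot directly plug into the $\{f(n)\}$ framework with $a_0,a_1$; instead I would observe that $\{\dsf(n+2)\}_{n\geq 0}$ does fit the framework (or handle the shift directly), since $\dsf(2)=0$, $\dsf(3)=1$ play the role of the two initial values and the recurrence $\dsf(2n)=\dsf(n)$ links even terms back — care is needed here because $\dsf(4)=\dsf(2)=0$ while the shifted index would want $\dsf(2\cdot 2)$, so the cleanest route is to set up the generating-function functional equation for $D(x)=\sum \dsf(n)x^n$ from scratch exactly as in Section \ref{sec_gen}, tracking the three leading zero coefficients.

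The key steps, in order: (1) derive $D(x)=A(x)+B(x)D(x^2)$ with $B(x)=1+\frac{x}{1-x^2}=\frac{1}{1-x}$ and an appropriate $A(x)$ reflecting the shifted initial data (I expect $A(x)$ to be essentially $\frac{x^3}{1-x^2}$ up to corrections, capturing that the first nonzero odd contribution is at $n=3$); (2) iterate to get $D(x)=\sum_{i\geq 0} A(x^{2^i})\prod_{r=0}^{i-1}B(x^{2^r}) + (\lim)\prod_r B(x^{2^r})$, where the limit term vanishes since $\dsf(0)=0$; (3) simplify using $B(x^{2^r})=\frac{1}{1-x^{2^r}}$ so that $\prod_{r=0}^{i-1}B(x^{2^r})=\prod_{r=0}^{i-1}\frac{1}{1-x^{2^r}}$; (4) on the combinatorial side, set up the generating function for $\OB_1(n)$ directly: a partition in $\OB_1(n)$ chooses a largest part $2^i$ occurring with odd multiplicity at least $3$ (tracked by $\frac{x^{3\cdot 2^i}}{1-x^{2^{i+1}}}$), together with, for each smaller $2^r$ with $r<i$, an arbitrary odd multiplicity $\geq 0$... here I must be careful: in an odd binary partition every present part has odd multiplicity, but smaller parts may be absent, so the factor for $2^r$ is $1+\frac{x^{2^r}}{1-x^{2^{r+1}}}=\frac{1}{1-x^{2^r}}$; (5) match the two expressions term by term.

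The main obstacle I anticipate is step (1)/(4): correctly bookkeeping the delayed initial conditions so that the functional equation's inhomogeneous term $A(x)$ is right, and correspondingly verifying that the combinatorial sum $\sum_{i\geq 0}\frac{x^{3\cdot 2^i}}{1-x^{2^{i+1}}}\prod_{r=0}^{i-1}\frac{1}{1-x^{2^r}}$ really does agree with the analytic $D(x)$ including at small $n$ (the coefficients of $x^0,x^1,x^2$ all being $0$, and $x^3\mapsto 1$). An alternative, perhaps cleaner, approach avoiding generating functions altogether: prove $|\OB_1(n)|=\dsf(n)$ by induction on $n$, exhibiting explicit maps that realize the recurrences — namely $\OB_1(2n)\leftrightarrow\OB_1(n)$ by halving all parts (valid since the largest part of a partition in $\OB_1(2n)$ is even, as no partition with largest part $1$ of even weight $2n$ can have $1$ repeated an odd number of times... wait, $1$ repeated $2n$ times has even multiplicity, so indeed the largest part is $\geq 2$ and even), and $\OB_1(2n+1)$ splitting according to whether the part $1$ is present: if $1\notin\lambda$ then... this is subtler since $\OB_1$ constrains the \emph{largest} part, not the smallest. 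Given this asymmetry, I expect the generating-function proof to be the more reliable route, and I would present that, with the induction sketched as a remark if space permits. In detail, I would write out: since the recurrence for $\dsf$ is that of $\sf$ with $c_1=c_2=1,c_3=0$, formula \eqref{gf_it}--\eqref{gf_exp} apply verbatim once the correct $A(x)$ and the vanishing of the limit term are established, and the resulting product-and-sum formula is recognized as the $\OB_1$ generating function by the part-size interpretation already used for \eqref{gfst} in Section \ref{sec_st}.
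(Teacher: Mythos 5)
Your overall strategy is the same as the paper's analytic proof of Theorem \ref{dsf}: re-derive the functional equation $D(x)=A(x)+B(x)D(x^2)$ from scratch to accommodate the delayed initial conditions, obtain $A(x)=\frac{x^3}{1-x^2}$, note that the limit term vanishes because $\dsf(0)=0$, and recognize the iterated sum-of-products as the generating function of $|\OB_1(n)|$ by the part-size interpretation already used in Section \ref{sec_st}. (The paper in fact presents a combinatorial proof first, verifying that $|\OB_1(n)|$ satisfies the same initial values and recurrences as $\dsf(n)$; the asymmetry you worry about is not an obstacle, since for odd $n\geq 5$ every $\lambda\in\OB_1(n)$ contains the part $1$ with odd multiplicity, and one splits on $m_\lambda(1)=1$ versus $m_\lambda(1)\geq 3$.)

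There is, however, a genuine error in your steps (3) and (4): the identity $1+\frac{x}{1-x^2}=\frac{1}{1-x}$ is false. The series $1+\frac{y}{1-y^2}=1+y+y^3+y^5+\cdots$ generates a part with multiplicity zero or odd, whereas $\frac{1}{1-y}=1+y+y^2+y^3+\cdots$ generates arbitrary multiplicity. This is not a harmless slip. With $\prod_{r=0}^{i-1}\frac{1}{1-x^{2^r}}$ in place of $\prod_{r=0}^{i-1}\bigl(1+\frac{x^{2^r}}{1-x^{2^{r+1}}}\bigr)$, the combinatorial side counts binary partitions whose largest part has odd multiplicity at least $3$ but whose smaller parts may have \emph{any} multiplicity; for example $2+2+2+1+1$ would be counted at $n=8$, giving coefficient $1$ where $\dsf(8)=|\OB_1(8)|=0$. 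Likewise the analytic side, iterated with the wrong $B$, no longer satisfies the functional equation for $D(x)$. Your step (5) would then match two identical but incorrect series, establishing nothing about $\dsf$ or $\OB_1$, and your proposed spot-check only through the coefficient of $x^3$ would not detect the discrepancy. The repair is simply to not simplify: keep $B(x^{2^r})=1+\frac{x^{2^r}}{1-x^{2^{r+1}}}$ on both sides, whose expansion $1+x^{2^r}+x^{3\cdot 2^r}+x^{5\cdot 2^r}+\cdots$ is precisely ``the part $2^r$ is absent or occurs with odd multiplicity''; with that correction your argument coincides with the paper's analytic proof.
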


\begin{proof}[Combinatorial proof] The condition that the first part must be repeated implies that $|\OB_1(n)|=0$ for $n=0,1,2$. Since $\OB_1(3)=\{(1,1,1)\}$, we have $|\OB_1(3)|=1$. Next we show that for $n\geq 4$, the sequence $\OB_1(n)$ satisfies the same recurrences as $\dsf(n)$. 

If $n$ is even and $\lambda\in \OB_1(n)$, then $1$ is not a part of $\lambda$ (since the multiplicity of $1$ in $\lambda$ is either zero or odd), so $ \lambda/2$ is a partition. As the property that the largest part of a partition is repeated is  preserved by doubling or halving all parts, $ \lambda/2\in \OB_1(n/2)$ if and only if $\lambda\in \OB_1(n)$, so $|\OB_1(2n)|= |\OB_1(n)|$.

 If $n=2t+1$, $t\geq 2$, and $\lambda \in \OB_1(n)$, then $\lambda$ has at least one part equal to $1$. If  $m_\lambda(1)=1$, then $1$ is not the largest partof $\lambda$ and $\lambda\setminus (1)\in \OB_1(2t)$. If  $m_\lambda(1)\geq 3$, Since $n\geq 4$, either $1$ is the largest part and $m_\lambda(1)\geq 3$, or $1$ is not the largest part. 
 Then $\lambda\setminus (1,1)\in\OB_1(2t-1)$. This transformation is invertible:  if $\mu\in \OB_1(2t)$, then $\lambda:=\mu\sqcup (1) \in \OB_1(2t+1)$ with $m_{\lambda}(1)=1$; and if $\mu\in \OB_1(2t-1)$, then $\lambda:=\mu\sqcup(1,1) \in \OB_1(2t+1)$ with $m_{\lambda}(1)\geq 3$. 
 Thus, for $t\geq 2$, we have $|\OB_1(2t+1)|=|\OB_1(2t)|+|\OB_1(2t-1)|$. 
 
 Therefore, $|\OB_1(n)|=\dsf(n)$ for $n\geq 0$.
\end{proof}

\begin{proof}[Analytic proof] 

Since the recurrence is delayed, we cannot use the generating function of section \ref{sec_gen}. However, if $$G(x):=\sum_{n=0 }^\infty\dsf(n)x^n,$$ working as in section \ref{sec_gen}, we obtain 
\begin{align*}G(x)& = \sum_{i= 0}^\infty \frac{x^{3\cdot 2^i}}{1-x^{2^{i+1}}} \prod_{r=0}^{i-1}\left(1+ \frac{x^{2^r}}{1-x^{2^{r+1}}}\right)\\ & = \sum_{n= 0}^\infty|\mathcal{OB}_1(n)|x^n.
\end{align*}

To see the last equality, arguing informally,  for each $i\geq 0$, $\displaystyle \frac{x^{3\cdot 2^i}}{1-x^{2^{i+1}}}$ generates parts equal to $2^i$ with odd multiplicity at least three, while $\displaystyle \prod_{r=0}^{i-1}\left(1+ \frac{x^{2^r}}{1-x^{2^{r+1}}}\right)$ generates  parts equal to $2^{r}$, $r<i$, each with odd multiplicity. 
\end{proof}

\begin{definition}
The delayed semi-Fibonacci partitions are defined recursively by $\DSF(1)=\DSF(2)=\emptyset, \DSF(3)=\{(2,1)\},$ and for $n\geq 2$, \begin{align*}\DSF(2n)& =2 \DSF(n),\\ \DSF(2n+1) & = (\DSF(2n)\sqcup (1)) \cup \DSF(2n-1)^{o+2}.\end{align*}\end{definition}

 \begin{example} The table below shows the partitions in $\DSF(n)$ for $n=1,\ldots,12$. 
$$\begin{array}{c|c} n& \DSF(n) \\ \hline  1 & \\ 2 & \\ 3 & 2+1 \\ 4&  \\ 5 & 3+2 \\ 6&4+2 \\ 7 & 5+2, 4+2+1 \\ 8 & \\ 9 & 7+2,4+3+2 \\ 10 & 6+4 \\ 11 & 9+2, 5+4+2, 6+4+1 \\ 12 & 8+4 \end{array} $$
\end{example}

Let $\OB_2(n)$ be the set of odd binary partition of $n$ with the first two parts being consecutive powers of $2$. For example, $\OB_2(7)=\{4 + 2 + 1, \, 2 + 1 + 1 + 1 + 1 + 1\}.$ The function $\xi:\OB_2(n)\to\OB_1(n)$ that splits the first  part of $\lambda\in \OB_2(n)$ into two equal parts is a bijection.

\begin{theorem} \label{dsf-ob1} For $n\geq 1$, the function $\xi\circ\rep_2$ is a bijection from $\DSF(n)$ to $\OB_1(n)$. 
\end{theorem}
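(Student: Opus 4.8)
### Proof proposal

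The plan is to prove Theorem~\ref{dsf-ob1} by induction on $n$, following the same template used for Theorem~\ref{st-ob} and Corollary~\ref{st}: show that both sides of the claimed bijection are built from the same recursive scaffolding, so that the map $\xi\circ\rep_2$ intertwines the recursions defining $\DSF(n)$ and $\OB_1(n)$. First I would record the base cases: for $n=1,2$ both $\DSF(n)$ and $\OB_1(n)$ are empty, and for $n=3$ we have $\DSF(3)=\{(2,1)\}$ and $\OB_1(3)=\{(1,1,1)\}$, with $\rep_2(2,1)=(2,1)\in\OB_2(3)$ and $\xi(2,1)=(1,1,1)$, so the map is a bijection there.

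For the inductive step I would treat the even and odd cases separately. When $n=2m$, every partition in $\DSF(2m)$ is $2\mu$ for a unique $\mu\in\DSF(m)$, and $\rep_2$ commutes with doubling all parts (since $\rep_2(2a)=2\,\rep_2(a)$), while $\xi$ — splitting the largest part of an odd binary partition in half — also commutes with doubling; combined with the inductive hypothesis on $\DSF(m)\to\OB_1(m)$ and the already-established bijection $\xi:\OB_2(n)\to\OB_1(n)$ (together with $|\OB_1(2m)|=|\OB_1(m)|$ from the proof of Theorem~\ref{dsf}), this gives the bijection for $n=2m$. When $n=2m+1$, the definition splits $\DSF(2m+1)$ into the disjoint pieces $\DSF(2m)\sqcup(1)$ and $\DSF(2m-1)^{o+2}$; I would check that $\xi\circ\rep_2$ carries the first piece onto the partitions of $\OB_1(2m+1)$ with $m_\lambda(1)=1$ and the second piece onto those with $m_\lambda(1)\ge 3$, matching exactly the decomposition of $\OB_1(2m+1)$ exhibited in the proof of Theorem~\ref{dsf}. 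The key point is that $\rep_2(\add\text{ a }1)$ appends a single $1$, and that $\rep_2(\lambda^{o+2})$ differs from $\rep_2(\lambda)$ by adding two copies of $1$ precisely when the unique odd part of $\lambda$ is $1$, and otherwise by adding two copies of $2^k$ where $k$ is the $2$-adic valuation of that odd part — and in all cases $\xi$ then redistributes the (unchanged) largest part correctly.

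The main obstacle I anticipate is the bookkeeping in the odd case around the operation $(\cdot)^{o+2}$: one must verify that for $\lambda\in\DSF(2m-1)$ the partition $\lambda^{o+2}$ still has a well-defined largest part that is repeated after applying $\rep_2$, i.e. that $\xi$ is still applicable, and that this matches the ``$m_\lambda(1)\ge 3$'' branch of $\OB_1(2m+1)$ without collision with the ``$m_\lambda(1)=1$'' branch. This amounts to checking that a partition in $\DSF(2m-1)$ has a unique odd part (which should follow from the semi-Fibonacci structure, as in Remark~\ref{R1} for the tribonacci case, or directly from a non-recursive description of $\DSF$), so that $(\cdot)^{o+2}$ is well-defined and invertible. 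Once that is in place, disjointness of the two images and surjectivity onto $\OB_1(2m+1)$ follow mechanically, completing the induction.
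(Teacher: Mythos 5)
Your proposal is correct and follows essentially the same route as the paper: induction on $n$ along the recursive definitions, with the odd case split into the two branches $\DSF(2m)\sqcup(1)$ and $\DSF(2m-1)^{o+2}$ matched against the $m_\lambda(1)=1$ versus $m_\lambda(1)\geq 3$ decomposition of $\OB_1(2m+1)$ from the proof of Theorem~\ref{dsf}. The only organizational difference is that the paper first proves $\rep_2$ is a bijection from $\DSF(n)$ onto the intermediate set $\OB_2(n)$ (with explicit inverse given by merging equal parts) and then composes with the pre-established bijection $\xi:\OB_2(n)\to\OB_1(n)$; this factoring sidesteps the one point your direct argument glosses over, namely that for $\mu\in\DSF(2m)$ the largest $2$-adic valuation is at least $2$ (since $\mu$ must contain parts of two consecutive valuations but no odd part), so that splitting the largest part of $\rep_2(\mu)\sqcup(1)$ cannot create extra $1$s and push the first branch into the $m_\lambda(1)\geq 3$ branch. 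Also note that since an odd part always has $2$-adic valuation $0$, $\rep_2(\lambda^{o+2})=\rep_2(\lambda)\sqcup(1,1)$ in every case, so the two subcases you distinguish there coincide.
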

\begin{proof} We prove  by induction that $\rep_2$ is a bijection from $\DSF(n)$ to $\OB_2(n)$. Since $\OB_2(1)=\OB_2(2)=\emptyset$, the statement holds for $n=1,2$. By definition $\DSF(3)=\OB_2(3)=\{(2,1)\}$ and $\rep_2(2,1)=(2,1)$. Let $n\geq 4$ and assume that $\rep_2$ is a bijection from $\DSF(k)$ to $\OB_2(k)$ for all $k<n$.

Start with $\lambda\in \DSF(n)$. If $n$ is even, $\rep_2(\lambda)=2\rep_2(\lambda{/2})$. Since $\lambda{/2}\in\DSF(n/2)$, we have $\rep_2(\lambda{/2})\in \OB_2(n/2)$. Multiplying each part of $\rep_2(\lambda{/2})$ by $2$ preserves the odd multiplicity of the parts and the fact that the first two parts are consecutive powers of $2$. Thus $\rep_2(\lambda)\in \OB_2(n)$. If $n$ is odd, then $\lambda$ there are two cases. 

(i) $\lambda=\mu\sqcup(1)$ for some $\mu \in \DSF(2n)$. By definition, there is no odd part in $\mu$. Then $\rep_2(\mu)\in \OB_2(n)$ and there is no part equal to $1$ in $\rep_2(\mu)$. Then, $\rep_2(\lambda)=\rep_2(\mu)\sqcup (1)\in \OB_2(n)$. 

(ii) $\lambda=\mu^{o+2}$ for some $\mu \in \DSF(2n-1)$. Then, the odd part of $\lambda$ is at least $3$ and $\rep_2(\lambda)=\rep_2(\mu)\sqcup(1,1)$. Since $\rep_2(\mu) \in \OB_2(2n-1)$, it follows that $\rep_2(\lambda)\in \OB_2(n)$. 

The inverse of $\rep_2$ is the map $g$ that merges equal parts into a single part. We show inductively that $g$ maps partitions in $\OB_2(n)$ to partitions in $\DSF(n)$. Suppose $g:\OB_2(k)\to \DSF(k)$ for all $k<n$ and let $\eta\in \OB_2(n)$. If $n$ is even, $\eta$ has no part equal to $1$. Then $\eta/2\in \OB_2(n/2)$ and $g(\eta/2)\in \DSF(n/2)$. Thus, $2g(\eta/2)=g(\eta)\in \DSF(n)$. If $n$ is odd, $\eta$ has an odd number of parts equal to $1$ and we have two cases. 

(a) $\eta$ has exactly one part equal to $1$. Since $n\geq 4$, we have $\eta\setminus (1)\in \OB_2(n-1)$ and $g(\eta\setminus (1))\in \DSF(n-1)$ has no odd part. Then $g(\eta)=g(\eta\setminus (1))\sqcup (1) \in \DSF(n)$. Note that the odd part of $g(\eta)$ is equal to $1$. 

(b) $\eta$ has at least three parts equal to $1$. Then $\eta\setminus (1,1)\in \OB_2(n-2)$ and $g(\eta\setminus (1,1))\in \DSF(n-2)$. Then $g(\eta)=g(\eta\setminus(1,1))^{o+2} \in \DSF(n)$. Note that the odd part of $g(\eta)$ is at least $3$. 
\end{proof}

Theorem \ref{dsf-ob1} leads to the following non-recursive definition of delayed semi-Fibonacci partitions. 

\begin{corollary} Let $n>0$. The set $\DSF(n)$ consists of partitions $\lambda$ of $n$ into parts with distinct $2$-adic valuations such that, if $v_1(\lambda)$ is the largest $2$-adic valuation of parts in $\lambda$, then $\lambda$ has part $2^{v_1(\lambda)}$ and also a part with $2$-adic valuation $v_1(\lambda)-1$. 
\end{corollary}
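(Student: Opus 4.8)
The plan is to derive the non-recursive description as an immediate consequence of Theorem \ref{dsf-ob1}, by transporting the combinatorial structure of $\OB_1(n)$ back along the bijection $\xi \circ \rep_2$. First I would observe that $\rep_2$ sets up a bijection between $\DSF(n)$ and $\OB_2(n)$ (this is already proved inside Theorem \ref{dsf-ob1}), so it suffices to characterize which partitions $\lambda$ of $n$ satisfy $\rep_2(\lambda) \in \OB_2(n)$. Recall that $\OB_2(n)$ consists of odd binary partitions of $n$ whose two largest parts are consecutive powers of $2$. Since $\rep_2^{-1}$ is the ``merge equal parts'' map, $\rep_2(\lambda)$ is an odd binary partition precisely when $\lambda$ has parts with pairwise distinct $2$-adic valuations (this is exactly the characterization of $\SF(n)$ recalled in Section \ref{prelim} via \cite[Theorem 4]{AMN}, since $\rep_2$ is a bijection $\SF(n)\to\OB(n)$).

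Next I would unpack the ``consecutive powers of $2$'' condition on $\rep_2(\lambda)$. If $\lambda$ has parts with distinct $2$-adic valuations, write $v_1(\lambda)$ for the largest one; then the largest part of $\rep_2(\lambda)$ is $2^{v_1(\lambda)}$ and the second-largest distinct part size in $\rep_2(\lambda)$ is $2^{w}$ where $w$ is the second-largest element of $v(\lambda)$. The requirement that these be consecutive powers of $2$ — i.e. $w = v_1(\lambda) - 1$ — translates back to: $\lambda$ has a part of $2$-adic valuation exactly $v_1(\lambda)-1$. One must also note that the part of $\lambda$ with $2$-adic valuation $v_1(\lambda)$ is necessarily $2^{v_1(\lambda)}$ itself: if it were $2^{v_1(\lambda)}\cdot m$ with $m>1$ odd, then $\rep_2$ would produce $m$ copies of $2^{v_1(\lambda)}$ but no larger part, and the ``first part repeated'' structure of $\OB_1(n)$ (equivalently the consecutive-powers condition in $\OB_2(n)$) forces $2^{v_1(\lambda)}$ to be the genuine maximum, which it is — but the statement asserts more, namely that $2^{v_1(\lambda)}$ is literally a part. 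Here the ``first two parts consecutive powers of $2$'' phrasing of $\OB_2$ already guarantees the largest part size is $2^{v_1(\lambda)}$ with odd multiplicity, and since that multiplicity times $2^{v_1(\lambda)}$ must leave room for a strictly smaller part, combining with distinctness of valuations yields that $2^{v_1(\lambda)}$ occurs, matching the corollary's assertion. I would handle this carefully, as it is the one place where ``the largest part is a power of $2$'' must be argued rather than read off.

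So the key steps, in order: (1) invoke Theorem \ref{dsf-ob1} to reduce to characterizing $\rep_2^{-1}(\OB_2(n))$; (2) use the $\SF(n)$/$\OB(n)$ correspondence from Section \ref{prelim} to see that $\rep_2(\lambda)$ being an odd binary partition $\iff$ $\lambda$ has parts with distinct $2$-adic valuations; (3) translate the two-largest-parts-consecutive-powers condition of $\OB_2(n)$ into: $\lambda$ contains $2^{v_1(\lambda)}$ and a part of $2$-adic valuation $v_1(\lambda)-1$; (4) conversely check that any $\lambda$ with distinct $2$-adic valuations of parts, containing $2^{v_1(\lambda)}$ and a part of valuation $v_1(\lambda)-1$, maps under $\rep_2$ into $\OB_2(n)$ — routine since distinct valuations give an odd binary partition and the two conditions supply the consecutive-powers pair at the top.

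I expect the main obstacle to be step (3)/(4): being precise about the interaction between ``distinct $2$-adic valuations of parts of $\lambda$'' and ``the largest part size of $\rep_2(\lambda)$ is an actual power of $2$.'' In particular one must rule out the scenario where the part of $\lambda$ with top valuation is $2^{v_1(\lambda)}m$ for odd $m>1$ while some smaller part supplies valuation $v_1(\lambda)-1$ — this does satisfy distinctness and does have consecutive top two powers in $\rep_2(\lambda)$, so the corollary's claim that $2^{v_1(\lambda)}$ is literally a part of $\lambda$ needs the extra input that $\OB_2 = \xi(\OB_1)$ forces the largest part to be repeated, hence after merging, the pre-image has genuine part $2^{v_1(\lambda)}$; alternatively one tracks through the recursive definition of $\DSF$ directly. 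Everything else is bookkeeping that parallels the proof of Theorem \ref{T-st}, so I would state it briefly and refer back.
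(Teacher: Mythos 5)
Your proposal is correct and takes essentially the same route as the paper, which presents this corollary as an immediate consequence of Theorem \ref{dsf-ob1}: one transports the defining conditions of $\OB_2(n)$ back through the merge bijection $\rep_2^{-1}$ established in that theorem's proof. The one point you flag as delicate --- that $2^{v_1(\lambda)}$ is literally a part of $\lambda$ --- is in fact immediate from the paper's definition of $\OB_2(n)$, since ``the first two parts are consecutive powers of $2$'' forces the largest part of $\rep_2(\lambda)$ to have multiplicity one and hence to merge to itself, so no detour through $\xi$ and $\OB_1(n)$ is needed.
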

\begin{remark} Clearly $\DSF(n)\subseteq \SF(n)$. 
\end{remark}
\subsection{Parity Results for $\dsf(n)$}

\begin{theorem} \label{dsf-equiv}For $n\geq 1$, $\dsf(4n-3)\equiv \dsf(4n+3)$.
\end{theorem}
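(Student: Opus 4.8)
The plan is to prove the congruence directly from the recurrences, with only the value $n=1$ checked by hand. First I would dispose of the base case: for $n=1$ the claim reads $\dsf(1)\equiv\dsf(7)$, and since $\dsf(1)=0$ and $\dsf(7)=2$ both sides are even.

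For $n\geq 2$ the idea is to unwind $\dsf(4n+3)$ until it is expressed in terms of $\dsf(4n-3)$ together with small-index terms; all the recurrence applications below are legitimate because $n\geq 2$ (the single borderline step is discussed at the end). Writing $4n+3=2(2n+1)+1$ gives $\dsf(4n+3)=\dsf(4n+2)+\dsf(4n+1)$; then $\dsf(4n+2)=\dsf(2n+1)$ and $\dsf(4n+1)=\dsf(4n)+\dsf(4n-1)$; then $\dsf(4n)=\dsf(2n)$ and $\dsf(4n-1)=\dsf(4n-2)+\dsf(4n-3)$; and finally $\dsf(4n-2)=\dsf(2n-1)$. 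Collecting these steps yields the exact identity
$$\dsf(4n+3)=\dsf(2n+1)+\dsf(2n)+\dsf(2n-1)+\dsf(4n-3).$$
Then I would apply the odd-index recurrence one last time, in the form $\dsf(2n)+\dsf(2n-1)=\dsf(2n+1)$ (valid since $n\geq 2$), to get $\dsf(4n+3)=2\,\dsf(2n+1)+\dsf(4n-3)$, whence $\dsf(4n+3)\equiv\dsf(4n-3)$ modulo $2$. This also records the stronger fact $\dsf(4n+3)-\dsf(4n-3)=2\,\dsf(2n+1)$.

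The only delicate point, and the reason the case $n=1$ must be separated out, is the step $\dsf(4n-1)=\dsf(4n-2)+\dsf(4n-3)$: this is the recurrence for $\dsf(2m+1)$ with $m=2n-1$, which requires $2n-1\geq 2$, i.e. $n\geq 2$. For $n=1$ the term $\dsf(3)=1$ is an initial condition and does not obey the recurrence, since $\dsf(2)+\dsf(1)=0\neq 1$ — this is exactly the "delay" built into the sequence. Apart from watching this threshold, the argument is routine bookkeeping and no induction is needed.
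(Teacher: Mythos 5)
Your proof is correct and follows essentially the same route as the paper: unwind $\dsf(4n+3)$ via the odd-index recurrence down to $\dsf(4n+2)+\dsf(4n)+\dsf(4n-2)+\dsf(4n-3)$, halve the even indices, and recombine $\dsf(2n)+\dsf(2n-1)=\dsf(2n+1)$ to get $2\dsf(2n+1)+\dsf(4n-3)$. Your explicit attention to the threshold $n\geq 2$ (where the "delay" would otherwise break the step $\dsf(4n-1)=\dsf(4n-2)+\dsf(4n-3)$) is a welcome precision that the paper handles only implicitly by checking $n=1$ separately.
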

\begin{proof} The statement is true for $n=1$ by inspection. For $n\geq 2$, the recurrence relation implies \begin{align*}\dsf(4n+3) & =\dsf(4n+2)+\dsf(4n)+\dsf(4n-2)+\dsf(4n-3)\\& = \dsf(2n+1)+\dsf(2n)+\dsf(2n-1)+\dsf(4n-3)\\ &= \dsf(2n+1)+\dsf(2n+1)+\dsf(4n-3) \\ & \equiv \dsf(4n-3).\end{align*}
\end{proof}

\begin{proposition} For $n\geq 1$, $\dsf(2n+1)+\dsf(8n+1)\equiv 1$. 
\end{proposition}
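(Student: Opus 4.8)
The plan is to show that the two sequences $n\mapsto\dsf(2n+1)$ and $n\mapsto\dsf(8n+1)$ satisfy, modulo $2$, the \emph{same} first-order recurrence driven by $\dsf(n)$; it then follows at once that their sum is constant modulo $2$, and a single base case fixes the constant. For the first sequence this is immediate: for $n\ge 2$ the recurrences give the exact identity $\dsf(2n+1)=\dsf(2n)+\dsf(2n-1)=\dsf(n)+\dsf(2(n-1)+1)$.

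The substantive step is the analogous congruence for the second sequence, namely
$$\dsf(8n+1)\equiv \dsf(n)+\dsf(8(n-1)+1)\pmod 2\qquad(n\ge 2).$$
To establish it I would unwind the recurrence four times. Using $8n+1=2(4n)+1$, $8n-1=2(4n-1)+1$, and $8n-3=2(4n-2)+1$, repeated application of $\dsf(2m+1)=\dsf(2m)+\dsf(2m-1)$ together with $\dsf(2m)=\dsf(m)$ yields
$$\dsf(8n+1)=\dsf(n)+\dsf(4n-1)+\dsf(2n-1)+\dsf(8n-5).$$
Since $8n-5=8(n-1)+3=2\bigl(4(n-1)+1\bigr)+1$, one further application gives $\dsf(8n-5)=\dsf(4n-3)+\dsf(8(n-1)+1)$. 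Finally $\dsf(4n-1)=\dsf(4n-2)+\dsf(4n-3)=\dsf(2n-1)+\dsf(4n-3)$, so the block $\dsf(4n-1)+\dsf(2n-1)+\dsf(4n-3)$ equals $2\dsf(2n-1)+2\dsf(4n-3)\equiv 0\pmod 2$, and the claimed congruence drops out. Along the way one must verify that every recurrence invocation is legal, i.e. the relevant index $m$ is $\ge 2$; this is exactly what forces the restriction $n\ge 2$, and hence the need to treat $n=1$ as a base case.

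Granting the two congruences, for $n\ge 2$ we get $\dsf(2n+1)+\dsf(8n+1)\equiv\dsf(2(n-1)+1)+\dsf(8(n-1)+1)\pmod 2$, so $\dsf(2n+1)+\dsf(8n+1)\bmod 2$ is independent of $n$ for $n\ge 1$; evaluating at $n=1$ gives $\dsf(3)+\dsf(9)=1+2\equiv 1$, which finishes the proof. The only real obstacle is the bookkeeping in the middle step—keeping track of which term appears at each level of the unwinding and checking the index bounds—but nothing beyond the defining recurrence is needed. (Theorem~\ref{dsf-equiv} is not required here, though it could serve as an alternative route to the cancellation $\dsf(4n-1)+\dsf(2n-1)+\dsf(4n-3)\equiv 0$.)
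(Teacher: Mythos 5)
Your proof is correct, and it is exactly the kind of argument the paper intends: the proposition is one whose proof the authors omit with the blanket remark that it follows from the recurrences and induction, and your four-fold unwinding of $\dsf(8n+1)$ mirrors the technique used in the paper's proof of Theorem~\ref{dsf-equiv}. The index bookkeeping checks out (every invocation of $\dsf(2m+1)=\dsf(2m)+\dsf(2m-1)$ and $\dsf(2m)=\dsf(m)$ has $m\ge 2$ once $n\ge 2$), the identity $\dsf(8n+1)=\dsf(n)+\dsf(4n-1)+\dsf(2n-1)+\dsf(8n-5)$ is right, and the base case $\dsf(3)+\dsf(9)=3\equiv 1$ closes the induction.
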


\section{Semi-Lucas partitions} 
\label{sec_sl} 

\begin{definition} The semi-Lucas sequence, $\{\sl(n)\}_{n\geq0}$, is defined recursively by $\sl(1)=2, \sl(2)=1$, and for $n\geq 1$ \begin{align*}\sl(2n)& =\sl(n)\\ \sl(2n+1)& =\sl(2n)+\sl(2n-1).\end{align*} \end{definition}

The following table shows the values of the semi-Lucas sequence $\sl(n)$ for $n = 1, \ldots, 12$.

$$\begin{array}
{c|cccccccccccc}
n & 1 & 2 & 3 & 4 & 5 & 6 & 7 & 8 & 9 & 10 & 11 & 12 \\
\hline
\sl(n) & 2 & 1 & 3 & 1 & 4 & 3 & 7 & 1 & 8 & 4 & 12 & 3
\end{array} $$

\begin{theorem}\label{eqn1_sl} For $n\geq 2$, $\sl(n)=\sf(n)+\dsf(n)$. 
\end{theorem}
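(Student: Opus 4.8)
The plan is to argue by strong induction on $n$, exploiting the fact that the defining recurrences of $\sf$, $\dsf$, and $\sl$ become \emph{identical} once the arguments are large enough. Write $g(n):=\sf(n)+\dsf(n)$; the goal is to show $g(n)=\sl(n)$ for all $n\geq 2$.

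First I would dispose of the base cases $n=2$ and $n=3$ by direct computation from the tables or the initial conditions: $\sl(2)=1=\sf(2)+\dsf(2)=1+0$ and $\sl(3)=3=\sf(3)+\dsf(3)=2+1$. Two base cases are needed because the odd-argument recurrence for $\dsf$ is only asserted for $\dsf(2m+1)$ with $m\geq 2$, i.e.\ only for arguments $\geq 5$, so the value $\dsf(3)=1$ is an initial condition, not something the recurrence produces; thus $n=3$ cannot be folded into the inductive step and must be checked by hand.

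For the inductive step, fix $n\geq 4$ and assume $g(m)=\sl(m)$ for every $m$ with $2\leq m<n$. If $n=2t$ is even, then $t\geq 2$, and the even-argument rules $\sf(2t)=\sf(t)$, $\dsf(2t)=\dsf(t)$, $\sl(2t)=\sl(t)$ give $g(2t)=g(t)$ and $\sl(2t)=\sl(t)$; applying the induction hypothesis at $t$ closes this case. If $n=2t+1$ is odd, then $n\geq 5$ forces $t\geq 2$, so the odd-argument recurrence for $\dsf$ is now valid, as are those for $\sf$ and $\sl$; hence $g(2t+1)=g(2t)+g(2t-1)$ and $\sl(2t+1)=\sl(2t)+\sl(2t-1)$, and the induction hypothesis applied at $2t$ and $2t-1$ (both lie in the range $[2,n)$ since $t\geq 2$) yields $g(2t+1)=\sl(2t+1)$.

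There is no genuine obstacle here; the argument is routine once the base cases are in place. The only point demanding care is the bookkeeping of the ranges in which each of the three recurrences is valid — in particular the "delay" in the definition of $\dsf$ — which is precisely what dictates taking $n=2$ and $n=3$ as base cases rather than attempting to derive $n=3$.
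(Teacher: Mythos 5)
Your proof is correct and takes the same route as the paper, which simply states that the theorem ``follows easily by induction'' and omits the details; your write-up supplies exactly those details, including the one point that genuinely requires care (the delayed recurrence for $\dsf$ forcing $n=3$ to be a separate base case). For what it's worth, the paper later also sketches a bijective proof via $\rep_2:\SF(n)\to\SL_{3\nmid}(n)$ and $f\circ\rep_2:\DSF(n)\to\SL_{3\mid}(n)$, but that is presented as a remark, not as the proof of this theorem.
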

\begin{proof} The theorem follows easily by induction. We omit the proof. 
\end{proof}

The generating function for $\sl(n)$ is obtained from the generating functions for $\sf(n)$ and $\dsf(n)$ with correction terms for the coefficients of $x^0$ and $x^1$: 

\begin{align*}\sum_{n=1}^\infty \sl(n)x^n & = x-1+ \prod_{r=0}^\infty \left(1+\frac{x^{2^r}}{1-x^{2^{r+1}}}\right) \\ & \hspace{1.3cm} +\sum_{i= 0}^\infty \frac{x^{3\cdot 2^i}}{1-x^{2^{i+1}}}\prod_{r=0}^{i-1} \left(1+\frac{x^{2^r}}{1-x^{2^{r+1}}}\right)
 \\ & = x+\sum_{n= 1}^\infty|\OB(n)|x^n+\sum_{n= 1}^\infty|\OB_1(n)|x^n\\ &=\sum_{n=1}^\infty |\overline\OB^*(n)|x^n, 
\end{align*} 
where $\overline\OB^*(1)=\{(1), (\overline 1)\}$ and, for $n\geq 2$, the set $\overline\OB^*(n)$ consists of odd binary overpartitions of $n$ in which only the largest part may be overlined, and, moreover, the largest part may be overlined only if  it is repeated. For example, $$\overline\OB^*(5)=\{4+1,\, 2+1+1+1,1+1+1+1+1,\, \overline{1}+1+1+1+1\}.$$

\begin{definition} The semi-Lucas partitions are defined recursively by $ \SL(2)=\{(2)\}$, $\SL(3)=\{(3), (2,1), (1,1,1)\},$ and for $n> 1$, \begin{align*} \SL(2n)& = 2\SL(n)\\ \SL(2n+1) & = \SL(2n)\sqcup (1) \bigcup \SL(2n-1)\sqcup (1,1).\end{align*}\end{definition}

Clearly, for $n\geq 2$ we have $|\SL(n)|=\sl(n)$. 

 \begin{example} The table below shows the partitions in $\SL(n)$ for $n=2,\ldots,8$. 
$$\begin{array}{c|c} n& \SL(n) \\ \hline  2 & 2\\ 3 & 3, 21,111 \\ 4& 4 \\ 5 & 41,311,2111,11111 \\ 6&42,222 \\ 7 & 61,421,2221,4111,31111,211111,1111111 \\ 8 & 8 \end{array} $$
\end{example}

\begin{theorem} Let $n>1$. The set $\SL(n)$ consists of all partitions $\lambda$ of $n$ satisfying the following conditions: \begin{itemize} \item[(a)] $v_i(\lambda)$ has odd multiplicity for all $1\leq i\leq \ell(\lambda)$;
\item[(b)] $\lambda_i\in \{2^k \mid k \in \mathbb Z_{\geq 0}\}$ for all $1<i\leq \ell(\lambda)$;
\item[(c)] $\lambda_1\in \{2^k, 3\cdot 2^k \mid k \in \mathbb Z_{\geq 0}\}$;
\item[(d)] $\lambda_1$ has the largest $2$-adic valuation among the parts of $\lambda$.
\end{itemize}
\end{theorem}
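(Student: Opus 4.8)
The plan is to prove the theorem by strong induction on $n$, in the same spirit as the proof of Theorem~\ref{T-st}. Write $\mathcal{L}(n)$ for the set of partitions of $n$ satisfying (a)--(d). The base cases $n=2,3$ are checked directly against the table of $\SL(n)$; for $n\geq 4$ we assume $\SL(m)=\mathcal{L}(m)$ for every $m$ with $1<m<n$ and prove the two inclusions separately.

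For $\SL(n)\subseteq\mathcal{L}(n)$ I would trace the three operations in the recursive definition. If $n=2t$, then $\lambda=2\mu$ with $\mu\in\SL(t)=\mathcal{L}(t)$, and doubling every part raises each $2$-adic valuation by $1$ while preserving all multiplicities and sends $2^k\mapsto 2^{k+1}$, $3\cdot 2^k\mapsto 3\cdot 2^{k+1}$, so (a)--(d) pass to $\lambda$. If $n=2t+1$ with $t\geq 2$, then either $\lambda=\mu\sqcup(1)$ with $\mu\in\SL(2t)$ --- and since $\SL(2t)=2\SL(t)$ forces every part of $\mu$ to be even, the adjoined $1$ is the unique part of valuation $0$, so (a) is preserved and (b)--(d) are untouched because $\lambda_1=\mu_1$ still carries the largest valuation --- or $\lambda=\mu\sqcup(1,1)$ with $\mu\in\SL(2t-1)=\mathcal{L}(2t-1)$, in which case (since $2t-1$ is odd) the multiplicity of $0$ in $v(\mu)$ is odd by (a), so adjoining two $1$s keeps that count odd, while (b)--(d) again survive because $\lambda_1=\mu_1$.

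For $\mathcal{L}(n)\subseteq\SL(n)$ the pivotal observation is that, for $\lambda\in\mathcal{L}(n)$, conditions (b) and (c) force the parts of valuation $0$ to be exactly the parts equal to $1$ together with $\lambda_1$ itself in the exceptional case $\lambda_1\in\{1,3\}$; by (a) their number is odd, hence their sum is odd, and since every other part is even, $n$ has the same parity as that count. Thus if $n$ is even, $\lambda$ has no part equal to $1$ and $\lambda_1\notin\{1,3\}$, so $\lambda$ has only even parts, $\lambda/2\in\mathcal{L}(t)=\SL(t)$ by induction, and $\lambda\in 2\SL(t)=\SL(n)$. If $n$ is odd (and $n\geq 5$, the case $n=3$ being a base case) one checks first that $m_\lambda(1)\geq 1$ --- otherwise the conditions would force $\lambda=(3)$, contradicting $n\geq 5$ --- and then splits: when $m_\lambda(1)=1$, which forces $\lambda_1\notin\{1,3\}$ and hence $\lambda_1$ even, put $\mu=\lambda\setminus(1)\vdash 2t$; one verifies $\mu\in\mathcal{L}(2t)$ (in particular $\mu$ has only even parts), so $\mu\in\SL(2t)$ and $\lambda\in\SL(2t)\sqcup(1)\subseteq\SL(2t+1)$; when $m_\lambda(1)\geq 2$, put $\mu=\lambda\setminus(1,1)\vdash 2t-1$; one verifies that the valuation-$0$ count of $\mu$ stays odd so that $\mu\in\mathcal{L}(2t-1)=\SL(2t-1)$, and reads off $\lambda=\mu\sqcup(1,1)\in\SL(2t+1)$.

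The only real work is this last paragraph: keeping the parity of the valuation-$0$ count of condition (a) under control when a part $1$, or the exceptional part $3$, is deleted, so that $\mu=\lambda\setminus(1)$ lands among the all-even partitions counted by $\SL(2t)$ while $\mu=\lambda\setminus(1,1)$ lands among the odd-weight partitions counted by $\SL(2t-1)$; the verifications of (b)--(d) under $\lambda\mapsto 2\lambda$, $\lambda\mapsto\lambda\sqcup(1)$ and $\lambda\mapsto\lambda\sqcup(1,1)$ are routine. An attractive alternative is to bypass the reverse inclusion entirely: applying $\rep_2$ to a $\lambda$ satisfying (a)--(d) produces an odd binary partition, and overlining its largest part precisely when $\lambda_1=3\cdot 2^{v_1(\lambda)}$ (a case in which that part is automatically repeated) yields an element of $\overline{\OB}^*(n)$; this assignment is reversible, so it is a bijection $\mathcal{L}(n)\to\overline{\OB}^*(n)$, and since $|\SL(n)|=\sl(n)=|\overline{\OB}^*(n)|$ by Theorem~\ref{eqn1_sl} and the generating-function computation for $\sl(n)$, it then suffices to establish the single inclusion $\SL(n)\subseteq\mathcal{L}(n)$.
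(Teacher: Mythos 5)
Your induction is correct and is exactly the argument the paper has in mind (it omits the details, saying only that the proof is by induction as in Theorem \ref{T-st}): you follow the recursion for $\SL(n)$ in both directions, with the key parity observation that the valuation-$0$ parts are the $1$s plus $\lambda_1$ when $\lambda_1\in\{1,3\}$, so that their odd count matches the parity of $n$ and governs which branch of the recursion applies. The alternative you sketch via the bijection with $\overline\OB^*(n)$ is also sound, but the main inductive proof is the one the paper intends.
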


\begin{proof} The theorem is proved by induction. We omit the details.
\end{proof}

For $n>1$, let $$\SL_{3 \nmid}(n):=\{\lambda \in \SL(n) \Big| 3 \nmid \lambda_1\} \text{ \ and \ } \SL_{3 \mid}(n):=\{\lambda \in \SL(n) \Big| \ 3 \mid \lambda_1\}.$$ Then, $\SL(n)=\SL_{3 \nmid}(n)\sqcup \SL_{3 \mid}(n)$. Clearly, if $n>1$, $\SL_{3 \nmid}(n)=\OB(n)$ and thus $|\SL_{3 \nmid}(n)|=\sf(n)$. Moreover, $\rep_2:\SF(n)\to SL_{3 \nmid}(n)$ is a bijection. 

We define a bijection $f: \OB_2(n)\to \SL_{3 \mid}(n)$ as follows. Let $\mu\in \OB_2(n)$. Then $\mu_1=2\mu_2$. Define $f(\mu)$ to be the partition obtained from $\mu$ by merging the first two parts of $\mu$, i.e., removing  parts  $\mu_1$ and $\mu_2$ and inserting a part equal to $3\mu_2$   The inverse of $f$ takes a partition $\lambda\in SL_{3 \mid}(n)$ and replaces the first part $\lambda_1=3\cdot 2^k$ by  parts $2^{k+1}$ and $2^k$. Thus, $f^{-1}(\lambda)\in \OB_2(n)$. Then, the proof of Theorem \ref{dsf-ob1} implies that $f  \circ \rep_2 :\DSF(n)\to SL_{3 \mid}(n)$ is a bijection. 
\begin{remark}
The discussion above gives a combinatorial proof of Theorem \ref{eqn1_sl}. \end{remark}
\medskip

\subsection{Parity Results for $\sl(n)$}
The proof of the next proposition is analogous to the proof of Theorem \ref{dsf-equiv}.

\begin{proposition} For $n\geq 2$, $\sl(4n-3)\equiv \sl(4n+3)$.
\end{proposition}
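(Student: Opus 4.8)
The plan is to prove the congruence $\sl(4n-3)\equiv\sl(4n+3)$ by essentially copying the argument used for Theorem \ref{dsf-equiv}, since the recurrences for $\sl(n)$ for odd arguments are identical to those for $\dsf(n)$; the only change lies in the initial conditions, and those do not affect the computation once $n$ is large enough. Concretely, I would first dispose of the base case $n=2$ by inspection using the table: $\sl(5)=4$ and $\sl(11)=12$, both even, so the congruence holds. For $n\geq 3$ (so that $4n+3\geq 15$ and all intermediate indices exceed the range where the delayed initial conditions matter), I would unfold the recurrence for $\sl(4n+3)$ step by step.

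The key computation runs as follows. Using $\sl(2m+1)=\sl(2m)+\sl(2m-1)$ and $\sl(2m)=\sl(m)$ repeatedly,
\begin{align*}
\sl(4n+3) &= \sl(4n+2)+\sl(4n+1)\\
&= \sl(2n+1)+\sl(4n)+\sl(4n-1)\\
&= \sl(2n)+\sl(2n-1)+\sl(2n)+\sl(4n-2)+\sl(4n-3)\\
&= 2\sl(n)+\sl(2n-1)+\sl(2n-1)+\sl(4n-3)\\
&\equiv \sl(4n-3),
\end{align*}
where the last line uses that $2\sl(n)$ and $2\sl(2n-1)$ are both even. (I should double-check the middle manipulation: $\sl(4n+1)=\sl(4n)+\sl(4n-1)$ and then $\sl(4n-1)=\sl(4n-2)+\sl(4n-3)=\sl(2n-1)+\sl(4n-3)$, while $\sl(4n+2)=\sl(2n+1)=\sl(2n)+\sl(2n-1)=\sl(n)+\sl(2n-1)$ and $\sl(4n)=\sl(2n)=\sl(n)$; combining gives $\sl(4n+3)=\sl(n)+\sl(2n-1)+\sl(n)+\sl(2n-1)+\sl(4n-3)=2\sl(n)+2\sl(2n-1)+\sl(4n-3)$.) This is cleaner than the $\dsf$ version because everything collapses in one pass rather than needing an induction on $n$ at all — the congruence is actually an identity modulo $2$ for every $n\geq 3$.

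There is essentially no obstacle here; the only thing to watch is the lower threshold on $n$, namely ensuring that all indices appearing in the unfolding ($4n+2, 4n+1, 4n, 4n-1, 4n-2, 4n-3, 2n+1, 2n, 2n-1, n$) are $\geq 2$ so that the stated recurrences apply without interference from the initial conditions $\sl(1)=2,\sl(2)=1$; taking $n\geq 2$ suffices since then $2n-1\geq 3$ and $4n-3\geq 5$. So the proof is: check $n=2$ from the table, then run the displayed computation for $n\geq 2$. Alternatively — and this is worth noting as a one-line remark — the result is immediate from Theorem \ref{eqn1_sl} together with Theorem \ref{dsf-equiv} and the known fact (from \cite{A19}) that $\sf(n)$ is even iff $3\mid n$: since $3\mid(4n-3)\iff 3\mid n\iff 3\mid(4n+3)$, we get $\sf(4n-3)\equiv\sf(4n+3)$, and adding this to $\dsf(4n-3)\equiv\dsf(4n+3)$ gives $\sl(4n-3)\equiv\sl(4n+3)$ for $n\geq 2$. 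I would present the direct unfolding as the main proof since it is self-contained, and perhaps mention the decomposition argument as a remark.
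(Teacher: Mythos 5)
Your main argument is exactly the proof the paper intends: the paper states only that the proof is analogous to that of Theorem~\ref{dsf-equiv}, and your unfolding of $\sl(4n+3)$ via the recurrences to $2\sl(n)+2\sl(2n-1)+\sl(4n-3)$ is precisely that analogous computation, valid directly for all $n\geq 2$. The computation is correct (as is your alternative remark via $\sl=\sf+\dsf$), so nothing further is needed.
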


\begin{proposition}\label{cong2_sl}
For $n\geq 1$, $\sl(4n+1) + \sl(4n+3)\equiv \sl(4n+4) + \sl(4n+6)$.
\end{proposition}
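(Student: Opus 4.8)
The plan is to prove Proposition \ref{cong2_sl} by reducing the four semi-Lucas values modulo $2$ to expressions in the semi-Lucas sequence at smaller arguments, using the recurrence $\sl(2n)=\sl(n)$ and $\sl(2n+1)=\sl(2n)+\sl(2n-1)$, and then arranging the telescoping so that both sides collapse to the same quantity modulo $2$. Concretely, I would first expand each of the four terms $\sl(4n+1)$, $\sl(4n+3)$, $\sl(4n+4)$, $\sl(4n+6)$ using the recurrence until every term is written in terms of $\sl(m)$ with $m$ near $2n$ or $n$. For instance, $\sl(4n+4)=\sl(2n+2)=\sl(n+1)$ and $\sl(4n+6)=\sl(2n+3)=\sl(2n+2)+\sl(2n+1)=\sl(n+1)+\sl(2n+1)$, so the right-hand side is $2\sl(n+1)+\sl(2n+1)\equiv \sl(2n+1)$. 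Similarly $\sl(4n+1)=\sl(4n)+\sl(4n-1)=\sl(2n)+\sl(4n-2)+\sl(4n-3)=\sl(n)+\sl(2n-1)+\sl(4n-3)$ and $\sl(4n+3)=\sl(4n+2)+\sl(4n+1)=\sl(2n+1)+\sl(4n+1)$, so the left-hand side modulo $2$ becomes $\sl(2n+1)+2\sl(4n+1)\equiv\sl(2n+1)$ — wait, that would make both sides trivially equal; I would need to double-check the bookkeeping, since the cleanest route is to compute LHS $+$ RHS and show it is even.

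So the cleaner formulation: I would show $\sl(4n+1)+\sl(4n+3)+\sl(4n+4)+\sl(4n+6)\equiv 0$. Using $\sl(4n+3)=\sl(4n+2)+\sl(4n+1)=\sl(2n+1)+\sl(4n+1)$, the first two terms contribute $2\sl(4n+1)+\sl(2n+1)\equiv\sl(2n+1)$. Using $\sl(4n+4)=\sl(n+1)$ and $\sl(4n+6)=\sl(2n+3)=\sl(n+1)+\sl(2n+1)$, the last two terms contribute $2\sl(n+1)+\sl(2n+1)\equiv\sl(2n+1)$. Adding, the whole sum is $\equiv 2\sl(2n+1)\equiv 0$, which is exactly what is needed. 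This is essentially a one-line computation once the recurrence is applied carefully, and no induction is required; it is purely a consequence of the recurrence relations together with the observation that $\sl(2n)=\sl(n)$ and $\sl(2n+2)=\sl(n+1)$.

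I would present the proof as a short displayed computation: first rewrite $\sl(4n+3)$ and $\sl(4n+6)$ via one application of the odd-index recurrence, then rewrite $\sl(4n+2)$, $\sl(4n+4)$ as $\sl(2n+1)$, $\sl(n+1)$ via the even-index recurrence, collect terms modulo $2$, and conclude. The only mild care needed is to ensure the recurrence $\sl(2n+1)=\sl(2n)+\sl(2n-1)$ is being applied at valid indices, i.e. for $n\geq 1$ we have $4n+3=2(2n+1)+1$ with $2n+1\geq 1$, and $4n+6=2(2n+3)$, $2n+3=2(n+1)+1$ with $n+1\geq 1$, all within the range where the recurrences hold; since the proposition assumes $n\geq 1$, there is no issue with the anomalous initial values $\sl(1)=2$, $\sl(2)=1$. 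The main obstacle, such as it is, is simply avoiding an arithmetic slip in the expansion — there is no conceptual difficulty, and the proof will be three or four lines of algebra modulo $2$.
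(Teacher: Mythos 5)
Your computation is correct: both sides reduce to $\sl(2n+1)$ modulo $2$, since $\sl(4n+1)+\sl(4n+3)=2\sl(4n+1)+\sl(4n+2)\equiv\sl(2n+1)$ and $\sl(4n+4)+\sl(4n+6)=\sl(n+1)+\sl(2n+3)=2\sl(n+1)+\sl(2n+1)\equiv\sl(2n+1)$, with all recurrence applications at valid indices. The paper omits the proof of this proposition but indicates it follows from the recurrences, which is exactly the route you take, so there is nothing to add.
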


\subsection{Total Number of Parts in $\SF(n)$, $\DSF(n)$, and $\SL(n)$}

Let $\psf(n)$ be the total number of parts in $\SF(n)$.

From the initial conditions and recurrences for $\SF(n)$,
the sequence $\psf(n)$ can be defined directly by $\psf(1)=1$ and, for $n\geq 1$, \begin{align*}\psf(2n)& =\psf(n)\\ \psf(2n+1)&=\psf(2n)+\sf(2n)+\psf(2n-1).\end{align*} 

The following table shows the values of the sequence $\psf(n)$ for $n = 1, 2, \ldots, 12$.

$$\begin{array}
{c|ccccccccccccc}
n & 1 & 2 & 3 & 4 & 5 & 6 & 7 & 8 & 9 & 10 & 11 & 12 \\
\hline
\psf(n) & 1 & 1 & 3 & 1 & 5 & 3 & 10 & 1 & 12 & 5 & 20 & 3 
\end{array} $$

\begin{theorem}For $n\geq 0$, $$\psf(8n+2)\equiv \psf(8n+5).$$
\end{theorem}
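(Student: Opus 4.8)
The plan is to reduce the whole statement to a single parity identity for $\psf$ and then prove that identity by induction, using throughout Andrews' theorem (recalled in the semi-Padovan parity subsection) that for $m\ge 1$ the number $\sf(m)$ is even exactly when $3\mid m$. The case $n=0$ is immediate, since $\psf(2)=1$ and $\psf(5)=5$ are both odd. For $n\ge 1$, I would apply the $\psf$-recurrence twice in succession, first to $\psf(8n+5)$ and then to the $\psf(8n+3)$ it produces, obtaining the exact identity
\[
\psf(8n+5)-\psf(8n+2)=\psf(8n+4)+\sf(8n+4)+\sf(8n+2)+\psf(8n+1).
\]
Halving even arguments ($\psf(8n+4)=\psf(2n+1)$, $\sf(8n+4)=\sf(2n+1)$, $\sf(8n+2)=\sf(4n+1)$) rewrites this, modulo $2$, as
\[
\psf(8n+5)-\psf(8n+2)\equiv\bigl(\psf(2n+1)+\psf(8n+1)\bigr)+\sf(2n+1)+\sf(4n+1).
\]
A short check on the residue of $n$ modulo $3$ gives $\sf(n)+\sf(2n+1)+\sf(4n+1)\equiv 0$ for $n\ge 1$, so the theorem will follow once we establish the auxiliary claim
\[
R(n):=\psf(2n+1)+\psf(8n+1)\equiv \sf(n)\pmod 2\qquad(n\ge 1).
\]

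To prove the auxiliary claim I would induct on $n$, with base case $R(1)=\psf(3)+\psf(9)=3+12=15\equiv 1=\sf(1)$. For $n\ge 2$, telescoping the relation $\psf(2m+1)-\psf(2m-1)=\psf(2m)+\sf(2m)$ (immediate from the recurrence) four times gives
\[
\psf(8n+1)-\psf\bigl(8(n-1)+1\bigr)=\sum_{j=0}^{3}\bigl(\psf(8n-2j)+\sf(8n-2j)\bigr),
\]
and halving the even arguments turns the summands into $\psf(n)+\sf(n)$, $\psf(4n-1)+\sf(4n-1)$, $\psf(2n-1)+\sf(2n-1)$ and $\psf(4n-3)+\sf(4n-3)$. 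Substituting $\psf(2n+1)=\psf(n)+\sf(n)+\psf(2n-1)$ and $\psf(4n-1)=\psf(2n-1)+\sf(2n-1)+\psf(4n-3)$ into $R(n)=\psf(2n+1)+\psf(8n+1)$, the terms $\psf(n),\sf(n),\psf(2n-1),\sf(2n-1),\psf(4n-3)$ each occur an even number of times and cancel modulo $2$; what survives is $\psf(8(n-1)+1)+\psf(2n-1)+\sf(4n-1)+\sf(4n-3)$, and since $\psf(8(n-1)+1)+\psf(2n-1)=R(n-1)$ this is
\[
R(n)\equiv R(n-1)+\sf(4n-1)+\sf(4n-3)\pmod 2.
\]
Feeding in the inductive hypothesis $R(n-1)\equiv\sf(n-1)$ together with the identity $\sf(n)+\sf(n-1)+\sf(4n-1)+\sf(4n-3)\equiv 0$ — again a three-way check on $n\bmod 3$ via Andrews' result — yields $R(n)\equiv\sf(n)$, closing the induction.

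The one place that needs care is the telescoping step: one must track precisely which $\psf$- and $\sf$-terms survive modulo $2$ after the two substitutions, and confirm that every invocation of the $\psf$-recurrence is legitimate (all arguments stay $\ge 1$, which holds once $n\ge 1$, so the induction step is valid from $n=2$ onward). The two residue-class verifications and the base case are routine.
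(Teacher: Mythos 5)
Your argument is correct, but it takes a genuinely different route from the paper's. The paper proves the congruence by a single direct induction on the quantity $\psf(8n+2)+\psf(8n+5)$ itself: it unwinds the recurrences for $\psf$ and $\sf$ through roughly eight steps until the expression collapses to $\psf(8(n-1)+2)+\psf(8(n-1)+5)$, using only the defining recurrences and no external parity information. You instead isolate the auxiliary invariant $R(n)=\psf(2n+1)+\psf(8n+1)\equiv \sf(n)$, prove it by a cleaner telescoping induction, and discharge the leftover $\sf$-terms via Andrews' characterization that $\sf(m)$ is even exactly when $3\mid m$ (your two three-way residue checks, $\sf(n)+\sf(2n+1)+\sf(4n+1)\equiv 0$ and $\sf(n)+\sf(n-1)+\sf(4n-1)+\sf(4n-3)\equiv 0$, both verify correctly, and all arguments stay $\geq 1$ so the $m\geq 1$ caveat on Andrews' theorem is respected). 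I checked the bookkeeping in your telescoping step: after substituting the recurrences for $\psf(2n+1)$ and $\psf(4n-1)$, the terms $\psf(n)$, $\sf(n)$, $\psf(2n-1)$, $\sf(2n-1)$, $\psf(4n-3)$ do each occur twice and cancel, leaving exactly $R(n)\equiv R(n-1)+\sf(4n-1)+\sf(4n-3)$ as claimed. What your approach buys is a shorter, more structured computation and a reusable intermediate fact ($R(n)\equiv\sf(n)$, which is of the same flavor as the paper's later propositions on sums like $\psf(2n+1)+\psf(4n+1)+\psf(8n+1)+\psf(16n+1)$); what it costs is the dependence on Andrews' parity theorem, which the paper's self-contained recurrence manipulation avoids.
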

\begin{proof}
We verify directly that $\psf(2)\equiv \psf(5)$. Let $n\geq 0$ and assume $\psf(8m+2)+\psf(8m+5)\equiv 0$ for all $m<n$. Then,

\begin{align*}\psf(8n+2)& +\psf(8n+5) = \psf(8n+2)+ \psf(8n+4)+\sf(8n+4)+\psf(8n+3)\\ & = \psf(8n+2)+ \psf(8n+4)+\sf(8n+4)+\psf(8n+2)\\ & \hspace{4.5cm}+ \sf(8n+2)+\psf(8n+1)\\ & \equiv \psf(8n+4)+\sf(8n+4) + \sf(8n+2)+\psf(8n+1)\\ 
& = \psf(4n+2)+ \sf(4n+2) + \sf(4n+1)+ \psf(8n)+\sf(8n)\\ & \qquad\qquad\qquad +\psf(8n-1)\\ & = \psf(4n+2)+ \sf(4n+2) + \sf(4n)+\sf(4n-1)+ \psf(4n)\\ & \qquad\qquad\qquad +\sf(4n)+\psf(8n-2)+ \sf(8n-2)+ \psf(8n-3)\\ & \equiv \psf(4n+2)+ \sf(4n+2) +\psf(4n) + \psf(4n-1)\\ & \qquad\qquad\qquad + \psf(8n-3)\\ & = \psf(2n+1)+ \sf(2n+1) +\psf(2n) + \psf(4n-2)+\sf(4n-2)\\ & \qquad \qquad\qquad +\psf(4n-3)+ \psf(8n-3)\\ & = \psf(2n)+\sf(2n)+\psf(2n-1)+\sf(2n)+\sf(2n-1)+\psf(2n)\\ & \qquad\qquad\qquad + \psf(4n-2)+\sf(4n-2)+\psf(4n-3)+ \psf(8n-3)\\ & \equiv \psf(2n-1)+\sf(2n-1)+ \psf(4n-2)+\sf(4n-2)+\psf(4n-3)\\ & \qquad\qquad\qquad + \psf(8n-3) \\ & = \psf(4n-2)+\sf(4n-2)+ \psf(4n-2)+\sf(4n-2)+\psf(4n-3)\\ & \qquad\qquad\qquad + \psf(8n-3)\\ & \equiv \psf(4n-3)+ \psf(8n-3) \\ & = \psf(8n-6)+ \psf(8n-3)\\ & = \psf(8(n-1)+2)+ \psf(8(n-1)+5) \equiv 0.
 \end{align*}
\end{proof}

Let $\pdsf(n)$ and $\psl(n)$ be the total number of parts in $\DSF(n)$ and $\SL(n)$, respectively.

From the initial conditions and recurrences for the two sets of partitions, for $n>2$,
\begin{align} 
\pdsf(1)& =0, \pdsf(2)=0, \pdsf(3)=2, \notag \\ \pdsf(2n)& =\pdsf(n),\notag \\ \pdsf(2n+1)& =\dsf(2n)+\pdsf(2n)+\pdsf(2n-1),\label{pdsfnum} \\ \ \notag \\
\psl(2)&=1, \psl(3) =6, \notag\\ \psl(2n)& =\psl(n), \notag\\ \psl(2n+1)& =\sl(2n)+\psl(2n)+\psl(2n-1)+2\sl(2n-1). \label{pslnum}
\end{align}

The following table shows the values of the sequences $\psf(n)$, $\pdsf(n)$, and $\psl(n)$ for $n = 1, 2, \ldots, 12$.

$$\begin{array}
{c|ccccccccccccc}
n & 1 & 2 & 3 & 4 & 5 & 6 & 7 & 8 & 9 & 10 & 11 & 12 \\
\hline
\pdsf(n) & 0 & 0 & 2 & 0 & 2 & 2 & 5 & 0 & 5 & 2 & 8 & 2 \\
\psl(n) & & 1 & 6 & 1 & 14 & 6 & 31 & 1 & 47 & 14 & 81 & 6
\end{array} $$

\smallskip

\begin{proposition} We have
\begin{align*}\psf(2n+1) + \psf(4n+1) + \psf(8n+1) + \psf(16n+1) & \equiv 0 \  \text{ for  } n\geq 0,
\\ \pdsf(2n+1) + \pdsf(4n+1) + \pdsf(8n+1) + \pdsf(16n+1) & \equiv 1 \  \text{ for  } n\geq 1,\\ \psl(2n+1) + \psl(4n+1) + \psl(8n+1) + \psl(16n+1)& \equiv 1\  \text{ for  } n\geq 2.\end{align*} 
\end{proposition}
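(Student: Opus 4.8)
The plan is to prove the three congruences one at a time: the $\psf$ and $\pdsf$ statements by induction on $n$, in the spirit of the proof of Theorem \ref{cong_pspa} and the other parity arguments in the paper, and the $\psl$ statement by reducing it to those two via a mod-$2$ analogue of the identity $\sl=\sf+\dsf$ (Theorem \ref{eqn1_sl}).

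First I would record the mod-$2$ forms of the recurrences. Since $2\sl(2n-1)$ is even, and since $\sf(2n)=\sf(n)$, $\dsf(2n)=\dsf(n)$, $\psf(2n)=\psf(n)$, and so on, one gets the exact identities $\psf(2n+1)=\psf(n)+\sf(n)+\psf(2n-1)$ and $\pdsf(2n+1)=\dsf(n)+\pdsf(n)+\pdsf(2n-1)$ (for $n\ge 1$, resp.\ $n\ge 2$), together with $\psl(2n+1)\equiv\sl(n)+\psl(n)+\psl(2n-1)\pmod 2$. Telescoping the first two, with $p\in\{\psf,\pdsf\}$ and $a$ the corresponding base sequence, yields $p(2m+1)\equiv p_0+\sum_{i=i_0}^{m}\bigl(p(i)+a(i)\bigr)\pmod 2$ for constants $(p_0,i_0)$ read off the initial data ($(1,1)$ for $\psf$, and $(\pdsf(3),2)\equiv(0,2)$ for $\pdsf$). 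Writing $c(i):=p(i)+a(i)$, substituting $2^{k}n+1=2(2^{k-1}n)+1$ for $k=1,2,3$, and noting each $c(i)$ with $i\le n$ then occurs four times and each $c(i)$ with $2n<i\le 4n$ twice, the sum of the four terms collapses modulo $2$ to
\[ \sum_{k=0}^{3}p(2^{k}n+1)\equiv\sum_{i=n+1}^{2n}c(i)+\sum_{i=4n+1}^{8n}c(i)\pmod 2. \]

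Second, I would show that this right-hand side is $\equiv 0$ when $p=\psf$ and $\equiv 1$ when $p=\pdsf$, again by induction on $n$. Here one uses $c(2i)=c(i)$ together with $c(2i+1)\equiv p(i)+c(2i-1)\pmod 2$ (a one-line consequence of the recurrences for $p$ and $a$, once $i$ is large enough that the $a$-recurrence applies) to rewrite the ranges $[n+1,2n]$ and $[4n+1,8n]$ in terms of the corresponding ranges for $n-1$, picking up a correction consisting of finitely many terms $p(j)$ and $a(j)$ with $j$ equal to $2^{t}n$ up to a bounded shift. That correction is killed by the known parity behaviour of the base sequence: for $\sf$ this is that $\sf(n)$ is even iff $3\mid n$ (\cite[Theorem 2]{A19}), and for $\dsf$ it is Theorem \ref{dsf-equiv} together with the proposition $\dsf(2n+1)+\dsf(8n+1)\equiv 1$ at the end of Section \ref{sec_dsf}; the base cases $n=0,1$ (resp.\ $n=1$) are checked against the tables. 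Keeping track of the multiplicities of these small-argument terms after full expansion, matching them against the correct residue classes of the base sequence, and handling the delayed initial conditions of $\pdsf$ near small $n$ is the part I expect to be the main obstacle, though it is entirely routine.

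Third, for the $\psl$ statement I would first prove $\psl(n)\equiv\psf(n)+\pdsf(n)+\dsf(n)\pmod 2$ for all $n\ge 2$ by induction: the even case is immediate since $x(2n)=x(n)$ for each of the four quantities, and the odd case follows from the three mod-$2$ recurrences above together with $\sf(n)+\dsf(n)=\sl(n)$ (Theorem \ref{eqn1_sl}), with base cases $n=2,3$ checked directly. Summing this identity over $n,2n,4n,8n$ gives
\[ \sum_{k=0}^{3}\psl(2^{k}n+1)\equiv\sum_{k=0}^{3}\psf(2^{k}n+1)+\sum_{k=0}^{3}\pdsf(2^{k}n+1)+\sum_{k=0}^{3}\dsf(2^{k}n+1)\pmod 2. \]
By the first two congruences the first two sums on the right are $\equiv 0$ and $\equiv 1$; the last sum vanishes because applying the proposition $\dsf(2m+1)+\dsf(8m+1)\equiv 1$ with $m=n$ and with $m=2n$ gives $\dsf(2n+1)+\dsf(8n+1)\equiv 1\equiv\dsf(4n+1)+\dsf(16n+1)$, hence $\dsf(2n+1)+\dsf(4n+1)+\dsf(8n+1)+\dsf(16n+1)\equiv 0$. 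Therefore $\sum_{k=0}^{3}\psl(2^{k}n+1)\equiv 1$ for $n\ge 2$, completing the proof.
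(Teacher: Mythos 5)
The paper omits the proof of this proposition entirely (it falls under the blanket remark at the end of Section \ref{prelim} that proofs of propositions follow from recurrences, induction, and previous results), so there is no argument of record to compare with; judged on its own, your plan is sound and the parts you actually execute are correct. The telescoped form $p(2m+1)\equiv p_0+\sum_{i=i_0}^{m}c(i)$ with $c=p+a$, the collapse of the four-term sum to $\sum_{i=n+1}^{2n}c(i)+\sum_{i=4n+1}^{8n}c(i)$ (modulo the harmless index slip: your displayed left-hand sides should read $\sum_{k=1}^{4}$, not $\sum_{k=0}^{3}$), and the entire third part --- reducing $\psl(n)\equiv\psf(n)+\pdsf(n)+\dsf(n)$ via Theorem \ref{eqn1_sl} and then invoking $\dsf(2m+1)+\dsf(8m+1)\equiv 1$ at $m=n$ and $m=2n$ --- all check out. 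The one substantive criticism is that the step you yourself flag as ``the main obstacle'' is asserted rather than carried out, and the mechanism you propose for it is not the right one: the boundary correction does not require the parity theorems for the base sequences (the mod-$3$ criterion for $\sf$, Theorem \ref{dsf-equiv}, etc.), and had it genuinely depended on the residue of $n$ modulo $3$ or $7$, the plain induction on $n$ you describe would have had to be upgraded to one tracking those residues. In fact the correction cancels identically. Setting $S(n)=\sum_{i=n+1}^{2n}c(i)+\sum_{i=4n+1}^{8n}c(i)$ and using only $c(2j)=c(j)$ and $c(2j+1)\equiv p(j)+c(2j-1)$, a direct bookkeeping of the eight boundary terms gives
\[
S(n)+S(n-1)\equiv c(2n-1)+p(4n-3)+p(4n-1),
\]
which vanishes because $p(4n-1)\equiv p(2n-1)+a(2n-1)+p(4n-3)=c(2n-1)+p(4n-3)$. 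Hence $S(n)$ is constant modulo $2$ for $n$ in the range where the recurrences apply ($n\geq 1$ for the $\psf$ case, $n\geq 2$ for the $\pdsf$ case, since the $\dsf$ recurrence is delayed), and the base values $S(0)=0$ and $S(1)\equiv 0$ for $\psf$, respectively $S(1)\equiv 1$ for $\pdsf$, read off the tables, complete the first two congruences. So your outline does close, and more cleanly than you anticipated.
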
 \smallskip

\begin{proposition}
We have  
\begin{align*}
\pdsf(16n+3) &\equiv \pdsf(16n+12) \  \text{ for  } n\geq 0,\\
\pdsf(16n+4) &\equiv \pdsf(16n+13) \  \text{ for  } n\geq 1.\\
\end{align*}
\end{proposition}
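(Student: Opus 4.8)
The plan is to clear the powers of two out of the arguments and then run a two-strand induction. Since $\pdsf(2m)=\pdsf(m)$, we have $\pdsf(16n+12)=\pdsf(4n+3)$ and $\pdsf(16n+4)=\pdsf(4n+1)$, so the proposition is equivalent to the pair of odd-index congruences
$$ (\mathrm{I}_n):\ \pdsf(16n+3)\equiv\pdsf(4n+3)\ \ (n\geq0), \qquad (\mathrm{II}_n):\ \pdsf(16n+13)\equiv\pdsf(4n+1)\ \ (n\geq1), $$
all congruences taken modulo $2$.

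I would first record the reductions used. Modulo $2$, the recurrence \eqref{pdsfnum} together with $\dsf(2m)=\dsf(m)$ and $\pdsf(2m)=\pdsf(m)$ gives $\pdsf(2m+1)\equiv \dsf(m)+\pdsf(m)+\pdsf(2m-1)$ for $m\geq1$, hence the collapsing identity $\pdsf(2m+1)+\pdsf(2m-1)\equiv\dsf(m)+\pdsf(m)$; likewise the recurrence for $\dsf$ gives $\dsf(2m+1)+\dsf(2m-1)\equiv\dsf(m)$ for $m\geq2$, and $\dsf(4n+3)+\dsf(4n+1)+\dsf(2n+1)\equiv0$ for $n\geq1$. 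These facts, all immediate from the defining recurrences, are the only inputs beyond the inductive hypotheses; in particular the deeper parity results of this section are not needed.

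The case $(\mathrm{I}_0)$ is trivial, $(\mathrm{I}_1)$ is checked by hand ($\pdsf(19)=25$, $\pdsf(7)=5$), and from there the two strands interleave as $(\mathrm{I}_1)\Rightarrow(\mathrm{II}_1)\Rightarrow(\mathrm{I}_2)\Rightarrow(\mathrm{II}_2)\Rightarrow\cdots$. To pass from $(\mathrm{II}_{n-1})$ to $(\mathrm{I}_n)$ (for $n\geq2$): unfold $\pdsf(16n+3)$ by \eqref{pdsfnum} through $\pdsf(16n+1)$, $\pdsf(16n-1)$, down to $\pdsf(16n-3)=\pdsf(16(n-1)+13)$, replace this last term by $\pdsf(4n-3)$ via $(\mathrm{II}_{n-1})$, and collapse the resulting $\pdsf$- and $\dsf$-terms with the identities above; this yields $\pdsf(16n+3)\equiv\dsf(n)+\pdsf(4n-3)$. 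Unfolding $\pdsf(4n+3)$ through $\pdsf(4n+1)$ and $\pdsf(4n-1)$ in exactly the same way gives $\pdsf(4n+3)\equiv\dsf(n)+\pdsf(4n-3)$ as well, hence $(\mathrm{I}_n)$. To pass from $(\mathrm{I}_n)$ to $(\mathrm{II}_n)$ (for $n\geq1$): unfold $\pdsf(16n+13)$ through $\pdsf(16n+11),\pdsf(16n+9),\pdsf(16n+7),\pdsf(16n+5)$; a copy of $\pdsf(16n+3)$ appears and $(\mathrm{I}_n)$ pairs it off against $\pdsf(4n+3)$, and after collapsing the remaining terms and applying $\dsf(4n+3)+\dsf(4n+1)+\dsf(2n+1)\equiv0$ one is left with $\pdsf(16n+13)\equiv\pdsf(4n+1)$.

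The one real difficulty is bookkeeping. Each unfolding spawns roughly a dozen $\dsf$- and $\pdsf$-terms, and one must group them so that the collapsing identities apply, verify that every application of a recurrence stays inside its range of validity, and arrange the two strands so that $(\mathrm{I}_n)$ is established just before it is invoked for $(\mathrm{II}_n)$ and $(\mathrm{II}_{n-1})$ just before $(\mathrm{I}_n)$. There is no conceptual obstacle beyond this.
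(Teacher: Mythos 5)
Your argument is correct: I verified the reductions $\pdsf(16n+12)=\pdsf(4n+3)$ and $\pdsf(16n+4)=\pdsf(4n+1)$, the three collapsing identities, both interleaved induction steps (each side of $(\mathrm{I}_n)$ indeed reduces to $\dsf(n)+\pdsf(4n-3)$, and the $(\mathrm{II}_n)$ unfolding leaves $\pdsf(4n+3)+\pdsf(4n+1)+\pdsf(16n+3)$, which $(\mathrm{I}_n)$ collapses to $\pdsf(4n+1)$), and the base values $\pdsf(19)=25$, $\pdsf(7)=5$. The paper omits the proof of this proposition, stating only that such propositions follow from the recurrences and induction, which is precisely the route you take.
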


\section{Stern-Brocot partitions} \label{sec_sb}

\begin{definition} The Stern-Brocot sequence, $\{\sb(n)\}_{n\geq 0}$, also known as the Stern diatomic sequence, is defined recusrively by $\sb(0)=0, \sb(1)=1$, and for $n\geq 1$, \begin{align*}\sb(2n)&=\sb(n),\\ \sb(2n+1)&=\sb(n+1)+\sb(n).\end{align*} \end{definition}

The following table shows the values of the Stern-Brocot sequence $\sb(n)$ for $n = 0, 1, \ldots, 12$.

$$\begin{array}
{c|ccccccccccccc}
n & 0 & 1 & 2 & 3 & 4 & 5 & 6 & 7 & 8 & 9 & 10 & 11 & 12 \\
\hline
\sb(n) & 0 & 1 & 1 & 2 & 1 & 3 & 2 & 3 & 1 & 4 & 3 & 5 & 2
\end{array} $$

In \cite{R}, Reznick proved that, for $n\geq 0$, the sequence $\sb(n+1)$ enumerates $\HB(n)$, the hyperbinary partitions of $n$, which are the binary 
 partitions of $n$ with parts repeated at most twice. Thus, as first shown in \cite{C}, the generating function for $\sb(n)$ is given by
$$x\prod_{i=0}^\infty(1+x^{2^i}+x^{2\cdot 2^i}).$$

\begin{definition} The Stern-Brocot partitions are defined recursively by $\SB(1)=\{(1)\}$ and for $n\geq 1$ \begin{align*}\SB(2n)& = 2\SB(n),\\ \SB(2n+1)& = 2\SB(n)\sqcup(1)\bigcup (2\SB(n+1))^-.
\end{align*}\end{definition}
Here $(2\SB(n+1))^- := \{(\lambda_1, \lambda_2, \ldots, \lambda_\ell)\, |\, (\lambda_1+1, \lambda_2, \ldots, \lambda_\ell)\in 2\SB(n+1)\}$.

The sequence $\SB(n)$ is well defined because partitions in $\SB(2n)$ have all parts even and partitions in $\SB(2n+1)$ have exactly one odd part, either the largest part or $1$, the smallest part. It is easily proved by induction that, for any $n\geq 1$, all partitions in $\SB(n)$ have distinct parts. 
The next result is immediate from Reznick's result.

\begin{theorem}\label{thm_sb} If $n\geq 1$, $|\HB(n-1)|=|\SB(n)|.$ 
\end{theorem}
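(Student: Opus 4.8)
The plan is to prove that $|\SB(n)| = \sb(n)$ for all $n \geq 1$, since Theorem \ref{thm_sb} then follows immediately from Reznick's result that $\sb(n+1) = |\HB(n)|$, hence $|\HB(n-1)| = \sb(n) = |\SB(n)|$. So the real content is the enumeration $|\SB(n)| = \sb(n)$, which I would establish by strong induction on $n$, verifying that the cardinalities $|\SB(n)|$ satisfy the same recurrence and initial condition as $\sb(n)$.

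First I would handle the base case: $|\SB(1)| = |\{(1)\}| = 1 = \sb(1)$. For the inductive step, suppose $|\SB(m)| = \sb(m)$ for all $m < n$ (and I would also want it for $m = n+1$ in one of the cases, so it is cleaner to prove the statement ``$|\SB(m)| = \sb(m)$ for all $m \leq N$'' by induction on $N$, or simply to note that the recursion for even indices never increases the argument and the odd-index recursion references $n$ and $n+1$ which are both at most $2n+1$, so a careful ordering of the induction works). For $n = 2k$ even, $\SB(2k) = 2\SB(k)$, and doubling every part is a bijection, so $|\SB(2k)| = |\SB(k)| = \sb(k) = \sb(2k)$. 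For $n = 2k+1$ odd, the definition gives $\SB(2k+1) = 2\SB(k) \sqcup (1) \ \bigcup\ (2\SB(k+1))^-$; the map $\lambda \mapsto 2\lambda \sqcup (1)$ is injective on $\SB(k)$ and the map defining $(2\SB(k+1))^-$ is injective on $\SB(k+1)$ (its stated inverse adds $1$ to the largest part), so $|2\SB(k) \sqcup (1)| = \sb(k)$ and $|(2\SB(k+1))^-| = \sb(k+1)$.

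The main obstacle — and the step requiring the most care — is showing the union in the odd case is \emph{disjoint}, so that the cardinalities add. Here I would invoke the structural observation already recorded in the paper just before Theorem \ref{thm_sb}: by induction, every partition in $\SB(m)$ has distinct parts, every partition in $\SB(2n)$ has all parts even, and every partition in $\SB(2n+1)$ has exactly one odd part, which is either the smallest part (equal to $1$) or the largest part. In $2\SB(k)\sqcup(1)$ the unique odd part is the smallest part, equal to $1$. In $(2\SB(k+1))^-$, the unique odd part is the largest part (we added $1$ to the largest part of an all-even partition), and since the partitions have distinct parts with $k+1 \geq 1$, if the largest part is $1$ then the partition is just $(1)$, which has weight $1 \neq 2k+1$ for $k \geq 1$; so for $k \geq 1$ the odd part in $(2\SB(k+1))^-$ is strictly greater than $1$ while the one in $2\SB(k)\sqcup(1)$ equals $1$, forcing disjointness. (For $k=0$, i.e. $n=1$, we are in the base case.) Hence $|\SB(2k+1)| = \sb(k) + \sb(k+1) = \sb(2k+1)$, completing the induction. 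Finally, combining $|\SB(n)| = \sb(n)$ with Reznick's theorem $|\HB(n-1)| = \sb(n)$ yields $|\HB(n-1)| = |\SB(n)|$, as claimed.
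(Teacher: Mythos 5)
Your proposal is correct and follows essentially the same route as the paper: the paper likewise observes that the union defining $\SB(2n+1)$ is disjoint (one piece has its unique odd part equal to $1$, the other has its unique odd part as the largest part), concludes $|\SB(n)|=\sb(n)$, and then invokes Reznick's result. The only quibble is the phrase ``we added $1$ to the largest part'' — the set $(2\SB(n+1))^-$ is obtained by \emph{subtracting} $1$ from the largest part of an all-even partition — but your conclusion that this largest part is odd and exceeds $1$ for $n\geq 1$ is still right.
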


 \begin{example} The table below shows the partitions in $\HB(n-1)$ and $\SB(n)$ for $n=1,\ldots,10$. 
$$\begin{array}{c|c|c} n & \HB(n-1) & \SB(n) \\ \hline  1 & (\ ) & 1\\2 & 1 & 2\\ 3 & 2,1+1& 3, 2+1 \\ 4&2+1& 4 \\ 5&4,2+2,2+1+1 & 5,3+2,4+1 \\ 6&4+1,2+2+1&6,4+2 \\ 7&4+2,4+1+1,2+2+1+1 & 7,6+1,4+2+1 \\ 8&4+2+1 & 8 \\9&8,4+4,4+2+2,4+2+1+1& 9,5+4,7+2,8+1 \\ 10& 8+1,4+4+1,4+2+2+1 & 10,6+4,8+2 \end{array} $$
\end{example}

 The following properties for partitions in $\SB(n)$ are easily proved by induction. 

\begin{itemize}
\item[(a)] If $\lambda=(\lambda_1, \lambda_2, \ldots, \lambda_\ell)\in \SB(n)$, then $\lambda\setminus (\lambda_1)=( \lambda_2, \ldots, \lambda_\ell)$ is a binary partition. 
(Doubling parts or inserting the part equal to $1$ in a binary partition results in a binary partition.) 

\item[(b)] For $n\geq 1$, partitions in $\SB(n)$ have distinct $2$-adic valuations of parts. Hence, $\SB(n)\subseteq \SF(n)$. 

\item[(c)] For $n\geq 1$, the partition $(n)$ with a single part is in $\SB(n)$. 
\end{itemize}

\medskip

Our next goal is to give a bijective proof of Theorem \ref{thm_sb}. 
To this end, we first establish two useful properties of the function $\bin$. 

\begin{lemma} \label{L_bin} For any positive integer $m$, we have $$\bin(2m-1)=2\bin(m-1)\sqcup (1).$$
\end{lemma}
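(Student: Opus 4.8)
The plan is to prove the identity $\bin(2m-1) = 2\bin(m-1) \sqcup (1)$ by comparing the binary expansions of the integers $2m-1$ and $m-1$. Recall that for a positive integer $k$, $\bin(k)$ is by definition the partition consisting of the distinct powers of $2$ appearing in the binary expansion of $k$, while $\bin(0) = (\ )$ (the empty partition, since there are no parts to list). The key arithmetic fact is that if $m - 1 = \sum_{i} 2^{e_i}$ is the binary expansion of $m-1$ (with distinct exponents $e_i \geq 0$), then $2(m-1) = \sum_i 2^{e_i + 1}$, so $2m - 1 = 2(m-1) + 1 = 1 + \sum_i 2^{e_i+1}$. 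Since every exponent $e_i + 1$ is at least $1$, the term $1 = 2^0$ does not collide with any of the doubled powers, and this is precisely the binary expansion of $2m-1$.

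First I would translate this into the partition language used in the paper. By definition, $2\bin(m-1)$ is the partition obtained by doubling every part of $\bin(m-1)$, i.e. the partition with parts $\{2^{e_i + 1}\}$; this is exactly $\bin(2(m-1)) = \bin(2m-2)$, and one could even first record the trivial sub-fact $\bin(2k) = 2\bin(k)$ for all $k \geq 0$ (which is already invoked in the proof of Theorem~\ref{SP-bin}). Then appending a part equal to $1$ gives the partition with parts $\{1\} \cup \{2^{e_i+1}\}$, and since $0 \notin \{e_i + 1\}$ these parts are still distinct, so this multiset is genuinely a partition into distinct powers of $2$ — hence it equals $\bin$ of the integer $1 + \sum_i 2^{e_i+1} = 2m-1$, by uniqueness of the binary representation. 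That uniqueness (equivalently, that $\bin$ is well-defined as "the unique partition of $n$ into distinct powers of $2$", as stated in Section~\ref{prelim}) is what closes the argument.

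The one edge case to check is $m = 1$: then $2m - 1 = 1$ and $m - 1 = 0$, so the claim reads $\bin(1) = 2\bin(0) \sqcup (1) = 2(\ ) \sqcup (1) = (1)$, which holds. For $m \geq 2$ the argument above applies verbatim. I do not anticipate a real obstacle here — the statement is essentially the observation that multiplying by $2$ shifts the binary digits up by one and then adding $1$ fills the now-vacant units digit — so the "hard part" is merely being careful that the part $1$ does not coincide with any doubled part, which is automatic since doubling produces only even numbers. The proof is short enough that no induction is needed; a direct appeal to uniqueness of binary expansions suffices.
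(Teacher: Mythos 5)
Your proof is correct. It takes the same basic route as the paper---a direct comparison of binary expansions, closed by uniqueness of binary representation---but with a cleaner parametrization: you expand $m-1$ itself as $\sum_i 2^{e_i}$ and observe that $2m-1 = 1+\sum_i 2^{e_i+1}$ is already in reduced binary form because doubling pushes every exponent to at least $1$. The paper instead starts from the expansion of $m$, writing $\bin(m)=2^{k_1}+\cdots+2^{k_j}$, and derives \emph{both} sides from it, using the telescoping identity $2^{k_j+1}-1 = 2^{k_j}+2^{k_j-1}+\cdots+2+1$ to produce the expansions of $2m-1$ and of $m-1$. Your version avoids that telescoping step entirely and also makes the edge case $m=1$ (where $\bin(0)$ is the empty partition) explicit, which the paper's phrasing leaves slightly implicit. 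Nothing is missing; the appeal to the already-noted fact $\bin(2k)=2\bin(k)$ and to the disjointness of the part $1$ from the doubled (hence even) parts is exactly what is needed.
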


\begin{proof} Suppose $m>0$ and $$\bin(m)=2^{k_1}+2^{k_2}+\cdots +2^{k_{j-1}}+ 2^{k_j},$$ with $k_1>k_2> \ldots >k_j\geq 0.$ Then, \begin{align*}\bin(2m-1)& =2^{k_1+1}+2^{k_2+1}+\cdots +2^{k_{j-1}+1}+\bin(2^{k_j+1}-1)\\ & = 2^{k_1+1}+2^{k_2+1}+\cdots +2^{k_{j-1}+1}+2^{k_j}+2^{k_j-1}+ \cdots +2+1. \end{align*} Similarly, $$\bin(m-1)= 2^{k_1}+2^{k_2}+\cdots +2^{k_{j-1}}+ 2^{k_j-1}+ 2^{k_j-2}+ \cdots +2+1.$$ The result follows. 
\end{proof}

\begin{corollary}\label{C_bin} For any positive even integer $m$ we have $$\bin(m/2-1)=\frac{1}{2}(\bin(m-1)\setminus (1)).$$\end{corollary}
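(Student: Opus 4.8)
The plan is to obtain this as an immediate consequence of Lemma~\ref{L_bin}. Since $m$ is a positive even integer, write $m=2\mu$ where $\mu=m/2$ is a positive integer. Applying Lemma~\ref{L_bin} with $m$ replaced by $\mu$ gives
$$\bin(m-1)=\bin(2\mu-1)=2\bin(\mu-1)\sqcup(1).$$
First I would observe that every part of $2\bin(\mu-1)$ is of the form $2\cdot 2^k=2^{k+1}$ for some $k\ge 0$, hence is even and in particular not equal to $1$. Therefore the displayed equation exhibits $\bin(m-1)$ as the multiset $2\bin(\mu-1)$ together with exactly one extra part equal to $1$, and that extra $1$ is the \emph{only} part of $\bin(m-1)$ equal to $1$. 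Consequently $\bin(m-1)\setminus(1)=2\bin(\mu-1)$, and dividing every part by $2$ yields $\tfrac12\bigl(\bin(m-1)\setminus(1)\bigr)=\bin(\mu-1)=\bin(m/2-1)$, which is the claim.

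The one point worth checking carefully is the boundary case $m=2$, where $\mu-1=0$: here one uses the convention that $\bin(0)$ is the empty partition, and indeed $\bin(1)=(1)$, so $\bin(1)\setminus(1)=(\ )$ and $\tfrac12(\ )=(\ )=\bin(0)$, consistent with the general argument. No genuine obstacle is expected; the entire content of the corollary is the substitution $m\mapsto m/2$ into Lemma~\ref{L_bin} together with the trivial observation that doubling produces only even parts, so the injected part equal to $1$ can be unambiguously removed.
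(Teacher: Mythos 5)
Your proof is correct and is exactly the derivation the paper intends: the corollary is stated as an immediate consequence of Lemma~\ref{L_bin} via the substitution $m \mapsto m/2$, with the observation that the doubled parts are all even so the appended $1$ is unambiguous. Your attention to the boundary case $m=2$ (with $\bin(0)$ the empty partition) is a reasonable extra check but not a point of divergence.
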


 \begin{remark}
Applying Lemma \ref{L_bin} repeatedly shows that if $m$ is odd and $k\geq 0$, then $2^k$ is not a part of $\bin(2^km-1)$. \end{remark}

We have the following non-recursive description of the partitions in $\SB(n)$. 

\begin{theorem}\label{thm_bin}For $n\geq 1$ we have $$\SB(n)=\{(\lambda\vdash n \ \big| \ \lambda\setminus (\lambda_1) \subseteq \bin(\lambda_1-1)\}. $$ 
\end{theorem}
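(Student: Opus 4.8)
The plan is to prove the set equality $\SB(n)=\{\lambda\vdash n \mid \lambda\setminus(\lambda_1)\subseteq\bin(\lambda_1-1)\}$ by induction on $n$, mirroring the recursive structure of the definition of $\SB(n)$. Denote the right-hand side by $\mathcal{D}(n)$. The base case $n=1$ is immediate: $\SB(1)=\{(1)\}$, and $(1)\setminus(1)=(\ )\subseteq\bin(0)=(\ )$, so $\mathcal{D}(1)=\{(1)\}$ as well. For the inductive step I would split into the even case $n=2m$ and the odd case $n=2m+1$, and in each case prove the two inclusions $\SB(n)\subseteq\mathcal{D}(n)$ and $\mathcal{D}(n)\subseteq\SB(n)$ by translating the operations (doubling, inserting a $1$, the $(\ )^-$ operation) through the identities for $\bin$ already available, namely $\bin(2k)=2\bin(k)$, $\bin(2k+1)=2\bin(k)\sqcup(1)$, Lemma \ref{L_bin}, Corollary \ref{C_bin}, and the structural facts (a), (b), (c) listed before the theorem (distinct $2$-adic valuations, $\lambda\setminus(\lambda_1)$ binary, $(n)\in\SB(n)$).

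For the even case $n=2m$: every $\lambda\in\SB(2m)$ is $2\mu$ for $\mu\in\SB(m)$, with $\lambda_1=2\mu_1$. By induction $\mu\setminus(\mu_1)\subseteq\bin(\mu_1-1)$, and since all parts of $\lambda$ are even, $\lambda\setminus(\lambda_1)=2(\mu\setminus(\mu_1))\subseteq 2\bin(\mu_1-1)$. Here I must connect $2\bin(\mu_1-1)$ to $\bin(\lambda_1-1)=\bin(2\mu_1-1)$; Lemma \ref{L_bin} gives $\bin(2\mu_1-1)=2\bin(\mu_1-1)\sqcup(1)$, so $2\bin(\mu_1-1)\subseteq\bin(\lambda_1-1)$, giving the containment. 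Conversely, if $\lambda\in\mathcal{D}(2m)$, I need $\lambda$ to have all parts even so that $\lambda/2$ makes sense; this follows because $\lambda\setminus(\lambda_1)$ is a binary partition contained in $\bin(\lambda_1-1)$, and by the Remark after Corollary \ref{C_bin} the part $1$ does not appear in $\bin(2\mu_1-1)$ when... — actually I need to argue $\lambda_1$ is even and $1\notin\lambda\setminus(\lambda_1)$ directly from $\lambda\vdash 2m$ together with membership in $\mathcal{D}$, then invoke Corollary \ref{C_bin} to see $\lambda/2\in\mathcal{D}(m)$ and apply the inductive hypothesis.

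For the odd case $n=2m+1$: a partition $\lambda\in\SB(2m+1)$ is either $2\nu\sqcup(1)$ with $\nu\in\SB(m)$ (so $\lambda_1=2\nu_1$ is the largest part and $1$ is the smallest), or $\lambda=(2\kappa)^-$ coming from $2\kappa\in 2\SB(m+1)$, i.e. $(\lambda_1+1,\lambda_2,\dots,\lambda_\ell)=2\kappa'$ with $\kappa'\in\SB(m+1)$ — here $\lambda_1=2\kappa'_1-1$ is odd. In the first subcase, $\lambda\setminus(\lambda_1)=2(\nu\setminus(\nu_1))\sqcup(1)\subseteq 2\bin(\nu_1-1)\sqcup(1)=\bin(2\nu_1-1)=\bin(\lambda_1-1)$ by Lemma \ref{L_bin}. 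In the second subcase, $\lambda_1+1=2\kappa'_1$, so $\lambda_1-1=2\kappa'_1-2=2(\kappa'_1-1)$ and $\bin(\lambda_1-1)=2\bin(\kappa'_1-1)$; meanwhile $\lambda\setminus(\lambda_1)=2(\kappa'\setminus(\kappa'_1))\subseteq 2\bin(\kappa'_1-1)$ by the inductive hypothesis applied to $\kappa'$, which is exactly what is needed. For the reverse inclusion $\mathcal{D}(2m+1)\subseteq\SB(2m+1)$, given $\lambda\vdash 2m+1$ with $\lambda\setminus(\lambda_1)\subseteq\bin(\lambda_1-1)$, exactly one part of $\lambda$ is odd (since $\lambda\setminus(\lambda_1)$ is binary, any odd part other than $\lambda_1$ would have to be $1$ with multiplicity contradicting "binary," and parity of the sum forces exactly one odd part); I split according to whether the odd part is $\lambda_1$ or is $1=\lambda_\ell$, strip off the $1$ or undo the $(\ )^-$ shift respectively, halve, and apply induction — using Corollary \ref{C_bin} in the first case and the relation $\bin(\lambda_1-1)=2\bin((\lambda_1+1)/2-1)$ in the second.

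The main obstacle I anticipate is the reverse inclusion in the odd case: one must correctly identify, from the sole hypothesis that $\lambda\setminus(\lambda_1)$ sits inside $\bin(\lambda_1-1)$, which of the two construction branches produced $\lambda$, and in particular verify the delicate compatibility between "$1$ is the smallest part" versus "$\lambda_1$ is the unique odd part" — these are exactly the two disjoint cases built into the recursion, and the bookkeeping with $\bin(2k-1)$ versus $\bin(2k)$ (i.e. whether or not a trailing $1$ appears) via Lemma \ref{L_bin} and its corollary is where care is required. Everything else is a routine unwinding of the $\bin$-identities.
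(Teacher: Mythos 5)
Your plan follows essentially the same route as the paper's proof: induction on $n$ with an even/odd case split, both inclusions handled by pushing the doubling, $\sqcup(1)$, and $(\,\cdot\,)^-$ operations through Lemma \ref{L_bin} and Corollary \ref{C_bin}. The two points you flag as delicate (ruling out a part $1$ in the even reverse case, and the ``exactly one odd part'' dichotomy in the odd reverse case) are resolved in the paper exactly as you suggest---via the parity of $n$ together with the fact that $\bin(\lambda_1-1)$ has distinct parts and contains a part $1$ only when $\lambda_1$ is even---so the proposal is sound.
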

\begin{proof}We prove the statement by induction. Since $\SB(1)=\{(1)\},\, \SB(2)=\{(2)\}, \,\SB(3)=\{(3), (2,1)\}, \,\SB(4)=\{(4)\}$, the statement holds for $n=1,2,3,4$. Let $m > 4$ and suppose the statement of the theorem is true if $n<m$. Let $\lambda\in \SB(m)$. \smallskip

\noindent \underline{Case 1:} $m=2t$, $t>2$. Then $\lambda=2\mu$ for some $\mu \in \SB(t)$. Using the induction hypothesis and Lemma \ref{L_bin}, we have \begin{align*}\lambda\setminus (\lambda_1)= 2(\mu\setminus(\mu_1))\subseteq 2\bin(\mu_1-1)& \subseteq 2\bin(\mu_1-1)\sqcup (1)\\ & =\bin(2\mu_1-1)=\bin(\lambda_1-1).\end{align*}\smallskip

\noindent \underline{Case 2:} $m=2t+1$, $t\geq 2$. Then $\lambda=2\mu\sqcup (1)$ for some $\mu\in \SB(t)$ or $\lambda=(2\eta_1-1)\sqcup 2(\eta\setminus(\eta_1))$ for some $\eta \in \SB(t+1)$.
\begin{itemize}
\item[(i)] In the first subcase, $\lambda_1 = 1$ and $$\lambda\setminus (\lambda_1) = 2(\mu\setminus (\mu_1)
\subseteq 2\bin(\mu_1-1)\sqcup (1) = \bin(\lambda_1-1). $$ 
 \smallskip

\item[(ii)] If $\lambda=(2\eta_1-1)\sqcup 2(\eta\setminus(\eta_1))$ for some $\eta \in \SB(t+1)$, we have $$\bin(\lambda_1-1)=\bin(2(\eta_1-1))=2\bin(\eta_1-1).$$ Then $$\lambda\setminus (\lambda_1)=2(\eta\setminus (\eta_1)) \subseteq 2\bin(\eta_1-1)=\bin(\lambda_1-1).$$
\end{itemize}

Conversely, let $\mu\vdash m$ be such that $\mu\setminus (\mu_1)\subseteq \bin(\mu_1-1)$.

\noindent \underline{Case I:} $m=2t$, $t\geq 2$. Note that $1\not \in \mu$ since otherwise $\mu_1$ is odd and $\bin(\mu_1-1)$ does not contain $1$ as a part. Thus all the parts of $\mu$ are even. Consider the partition $\nu=\mu/2$. Then $\nu\setminus(\nu_1)=(\mu\setminus(\mu_1))/2$. Since $1\not \in \mu$, we have $\mu\setminus (\mu_1)\subseteq \bin(\mu_1-1)\setminus (1)$. 
Using Corollary \ref{C_bin}, we get
$$\bin(\nu_1-1)=\bin(\mu_1/2-1)=\frac{1}{2}(\bin(\mu_1-1)\setminus (1))\supseteq (\mu\setminus(\mu_1))/2=\nu\setminus (\nu_1).$$ 
By the induction hypothesis $\nu\in \SB(t)$, so $\mu=2\nu \in \SB(m)$.\smallskip

\noindent \underline{Case II:} $m=2t+1$, $t\geq 2$. Then either (i) the smallest part of $\mu$ is $1$ and there are no other odd parts, or (ii) the largest part of $\mu$ is odd. The latter includes the case $\lambda=(m)$. 
\begin{itemize}
\item[(i)] In the first subcase, $\mu_1$ is even and $\mu\setminus(\mu_1, 1)\subseteq \bin(\mu_1-1)\setminus (1)$. Let $\nu=(\mu\setminus(1))/2$. 
As in Case I, using Corollary \ref{C_bin}, we get
$$\bin(\nu_1-1)=\bin(\mu_1/2-1)=\frac{1}{2}(\bin(\mu_1-1)\setminus (1))\supseteq(\mu\setminus(\mu_1, 1))/2=\nu\setminus(\nu_1).$$ 
 By the induction hypothesis, $\nu\in \SB(t)$ and hence $\mu=2\nu\sqcup(1)\in \SB(m)$.

\item[(ii)] In the second subcase, $\mu_1$ is odd and since $t\geq 2$, $\mu_1\geq 3$. If $\ell(\lambda)=1$, by property (c) above, $\lambda\in \SB(m)$. If $\ell(\lambda)\geq 2$, let $\nu=(\mu_1+1, \mu_2, \ldots, \mu_\ell)/2$. We have $$\bin(\nu_1-1)=\bin((\mu_1-1)/2)=\frac{1}{2}\bin(\mu_1-1)\supseteq (\mu\setminus (\mu_1))/2=\nu\setminus (\nu_1).$$ By the induction hypothesis, $\nu\in \SB(t+1)$. Hence $\mu=(2\nu_1-1, 2\nu_2, \ldots, 2\nu_\ell)\in \SB(m)$. 
\end{itemize}
\end{proof}

\begin{remark}
We define $\theta: \SB(n) \to \HB(n-1)$ by $\theta(\lambda) = \bin(\lambda_1-1) \sqcup \lambda\setminus (\lambda_1)$. By Theorem \ref{thm_bin}, $\theta(\lambda)$ is a binary partition whose parts have multiplicity at most 2. As $\bin$ preserves the sum of parts, $\theta(\lambda) \in \HB(n-1)$. For any partition $\mu$, let $\mu^d$ be the partition containing one copy of each part of $\mu$. Then $\theta^{-1}(\mu) = (1+|\mu^d|) \sqcup \mu \setminus \mu^d$, which is in $\SB(n)$, again by Theorem \ref{thm_bin}. 
\end{remark}

We now give a recursive definition of hyperbinary partitions that lets us find the total number of parts in $\HB(n)$ recursively.

\begin{definition}We define the set $\HB'(n)$ recursively by $\HB'(1)=\{(1)\}$, $\HB'(2)=\{(2),(1,1)\}$ , and for $n\geq 1$ \begin{align*}\HB'(2n)& = 2\HB'(n)\bigcup 2HB'(n-1)\sqcup(1,1),\\ \HB'(2n+1)& = 2\HB'(n)\sqcup(1).
\end{align*}\end{definition}

The union is disjoint in the definition of $\HB'(2n)$.

\begin{theorem} For $n\geq 1$, $|\HB'(n)|=\sb(n+1)$.
\end{theorem}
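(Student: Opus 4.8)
The plan is to prove the identity by strong induction on $n$, showing that $|\HB'(n)|$ satisfies exactly the Stern--Brocot recurrence after the index shift $n\mapsto n+1$. First I would extract the two cardinality recurrences implicit in the recursive definition of $\HB'$. Since the maps $\lambda\mapsto 2\lambda$, $\lambda\mapsto 2\lambda\sqcup(1)$, and $\lambda\mapsto 2\lambda\sqcup(1,1)$ are each injective on partitions, and since the union defining $\HB'(2n)$ is disjoint (as noted in the text: a partition in $2\HB'(n)$ has all parts even, whereas a partition in $2\HB'(n-1)\sqcup(1,1)$ has exactly two parts equal to $1$), one obtains
\[
|\HB'(2n)| = |\HB'(n)| + |\HB'(n-1)|, \qquad |\HB'(2n+1)| = |\HB'(n)|,
\]
valid whenever the right-hand sides are defined.

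Next I would dispatch the base cases directly: $|\HB'(1)| = 1 = \sb(2)$ and $|\HB'(2)| = 2 = \sb(3)$. For the inductive step, fix $n\geq 3$ and assume $|\HB'(m)| = \sb(m+1)$ for every $1\leq m<n$. If $n = 2k+1$ is odd, then $k\geq 1$ and $k<n$, so $|\HB'(n)| = |\HB'(k)| = \sb(k+1) = \sb(2k+2) = \sb(n+1)$, using the induction hypothesis and $\sb(2m)=\sb(m)$. If $n = 2k$ is even, then $k\geq 2$, hence $k-1\geq 1$ and $k,k-1<n$, so $|\HB'(n)| = |\HB'(k)| + |\HB'(k-1)| = \sb(k+1) + \sb(k) = \sb(2k+1) = \sb(n+1)$, the last equality being the Stern--Brocot recurrence at the odd argument $2k+1$. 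This closes the induction.

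I do not expect a genuine obstacle here; the argument is routine once the cardinality recurrences are isolated. The only two points that need a sentence of care are the disjointness of the union in the definition of $\HB'(2n)$ --- already asserted in the excerpt and justified by the parity/multiplicity observation above --- and the index bookkeeping: restricting the inductive step to $n\geq 3$ guarantees that the even case only refers to $\HB'(k)$ and $\HB'(k-1)$ with $k-1\geq 1$, so the two base cases $n=1,2$ suffice.
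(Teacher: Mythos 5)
Your proof is correct and follows essentially the same route as the paper: verify the base cases $n=1,2$, then use strong induction with the cardinality recurrences $|\HB'(2k)|=|\HB'(k)|+|\HB'(k-1)|$ and $|\HB'(2k+1)|=|\HB'(k)|$ matched against the Stern--Brocot recurrence at the shifted index. The only difference is that you spell out the disjointness and injectivity justifications for the cardinality recurrences, which the paper records separately as a one-line remark after the definition of $\HB'$.
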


\begin{proof}
By inspection, $|\HB'(1)|=\sb(2)=1$ and $|\HB'(2)|=\sb(3)=2$. Let $n\geq 2$ and assume that for all $m<n$ we have $|\HB'(m)|=\sb(m+1)$. 
If $n=2t$ is even, using the recurrence in the definition of $\HB'(n)$ and the induction hypothesis, we have \begin{align*}|\HB'(n)|=|\HB'(2t)|& =|\HB'(t)|+|\HB'(t-1)|\\ & =\sb(t+1)+\sb(t)\\ & =\sb(2t+1)\\ & =\sb(n+1).\end{align*} Similarly, if $n=2t+1$ is odd, we have \begin{align*}|\HB'(n)|=|\HB'(2t+1)| =|\HB'(t)| =\sb(t+1) =\sb(2(t+1)) =\sb(n+1).\end{align*} 
\end{proof}

\begin{theorem}
For $n \geq 1$, $\HB'(n) = \HB(n)$.
\end{theorem}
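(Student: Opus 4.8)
The plan is to prove $\HB'(n)=\HB(n)$ by strong induction on $n$, matching the recurrence defining $\HB'(n)$ against a parity-based decomposition of the set of hyperbinary partitions. First I would verify the base cases $n=1,2$ by inspection: $\HB'(1)=\{(1)\}=\HB(1)$ and $\HB'(2)=\{(2),(1,1)\}=\HB(2)$ (the partition $(1,1)$ is hyperbinary, since the part $1$ occurs exactly twice). These two cases must be checked separately because the recurrence for $\HB'(2n)$ at $n=1$ would refer to the undefined set $\HB'(0)$; for $n\geq 3$ the recurrence only invokes $\HB'(m)$ with $1\leq m<n$.

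The key observation is that if $\mu$ is a hyperbinary partition of $m$, then the number of parts of $\mu$ equal to $1$ is $0$, $1$, or $2$, and this number has the same parity as $m$ (every part of $\mu$ other than $1$ is even). Consequently a hyperbinary partition of an even integer $2t$ has either no part equal to $1$ or exactly two parts equal to $1$, while a hyperbinary partition of an odd integer $2t+1$ has exactly one part equal to $1$.

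For the inductive step with $n=2t$, $t\geq 2$: the map $\mu\mapsto 2\mu$ is a bijection from $\HB(t)$ onto the hyperbinary partitions of $2t$ with no part equal to $1$ (its inverse halves every part), and $\mu\mapsto 2\mu\sqcup(1,1)$ is a bijection from $\HB(t-1)$ onto the hyperbinary partitions of $2t$ with exactly two parts equal to $1$ (its inverse deletes the two $1$'s and halves). These two families exhaust $\HB(2t)$ and are disjoint, being distinguished by the number of parts equal to $1$; hence $\HB(2t)=2\HB(t)\bigcup 2\HB(t-1)\sqcup(1,1)$. Since $1\leq t-1<t<n$, the induction hypothesis gives $\HB(t)=\HB'(t)$ and $\HB(t-1)=\HB'(t-1)$, so the right-hand side is exactly $\HB'(2t)$. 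For $n=2t+1$, $t\geq 1$: deleting the unique part equal to $1$ and halving is a bijection from $\HB(2t+1)$ onto $\HB(t)$, so $\HB(2t+1)=2\HB(t)\sqcup(1)=2\HB'(t)\sqcup(1)=\HB'(2t+1)$ by the induction hypothesis.

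The argument is essentially routine; the only points needing care are the separate treatment of the base cases (as noted above) and the verification that the two pieces in the even case are disjoint and exhaustive, both of which follow immediately from the parity count of the number of parts equal to $1$. I do not anticipate a substantive obstacle.
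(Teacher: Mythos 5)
Your proof is correct, but it takes a genuinely different route from the paper. The paper's proof is a two-line cardinality argument: the recursive construction only ever produces binary partitions with multiplicities at most two, so $\HB'(n)\subseteq\HB(n)$; since the preceding theorem shows $|\HB'(n)|=\sb(n+1)$ and Reznick's result gives $|\HB(n)|=\sb(n+1)$, the containment of finite sets of equal size forces equality. You instead prove the equality directly by strong induction, decomposing $\HB(n)$ according to the number of parts equal to $1$ (which, as you correctly observe, is $0$, $1$, or $2$ and has the parity of $n$) and matching each piece bijectively against the corresponding term of the recurrence defining $\HB'(n)$. Your argument is self-contained and does not invoke Reznick's theorem or the counting result $|\HB'(n)|=\sb(n+1)$ at all; indeed, combined with that counting result it would yield an independent proof that $\sb(n+1)$ enumerates the hyperbinary partitions of $n$. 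The paper's approach is shorter given the machinery already established, while yours is more structural and explains \emph{why} the recursion generates exactly the hyperbinary partitions. Your handling of the base cases (needed because the even recurrence at $t=1$ would reference the undefined $\HB'(0)$) and of disjointness/exhaustiveness in the even case is sound.
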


\begin{proof}
The definition of $\HB'(n)$ implies that $\HB'(n) \subseteq \HB(n)$. Since $|\HB'(n)| = |\HB(n)|$, the result follows.
\end{proof}

\subsection{Parity Results for $\sb(n)$}

In \cite[Theorem 2]{A19} it is shown that $\sf(n)$ is even if and only if $n\equiv 0 \pmod 3$. The analogous result holds for $\sb(n)$. 

\begin{proposition}For $n\geq 0$, $\sb(n)$ is even if and only if $n\equiv 0 \pmod 3$. \end{proposition}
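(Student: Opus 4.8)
The plan is to prove the statement by strong induction on $n$, working modulo $2$ and using the defining recurrences $\sb(2n)=\sb(n)$ and $\sb(2n+1)=\sb(n+1)+\sb(n)$. The natural invariant to carry along is the pattern of $\sb(n)\bmod 2$ on a full residue class system modulo $3$: I claim that for every $m\geq 0$ the triple $(\sb(3m),\sb(3m+1),\sb(3m+2))$ is congruent to $(0,1,1)\pmod 2$. This is clearly equivalent to the proposition, so it suffices to establish this periodic pattern.

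First I would check the base cases directly from the table: $(\sb(0),\sb(1),\sb(2))=(0,1,1)$ and $(\sb(3),\sb(4),\sb(5))=(2,1,3)\equiv(0,1,1)$, which covers enough small values to start the induction. For the inductive step, fix $n\geq 3$ and assume the claim holds for all smaller arguments; I want to show $\sb(n)$ has the asserted parity in terms of $n\bmod 3$. Split into the cases $n$ even and $n$ odd. If $n=2k$, then $\sb(n)=\sb(k)$, and since $2k\equiv 0,1,2\pmod 3$ corresponds to $k\equiv 0,2,1\pmod 3$ respectively (multiplication by $2$ is a bijection on $\mathbb Z/3$), the induction hypothesis applied to $k$ gives exactly the parity dictated by $n\bmod 3$ — one checks the three subcases match $(0,1,1)$ read off at position $2k\bmod 3$. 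If $n=2k+1$, then $\sb(n)=\sb(k+1)+\sb(k)$; here I would again run through the three residues of $n$ modulo $3$, equivalently of $k$ modulo $3$, and in each case use the induction hypothesis to read off $\sb(k)\bmod 2$ and $\sb(k+1)\bmod 2$ from consecutive positions in the repeating block $(0,1,1)$, then add. Concretely: if $k\equiv 0$ then $n\equiv 1$ and $\sb(k)+\sb(k+1)\equiv 0+1=1$; if $k\equiv 1$ then $n\equiv 0$ and $\sb(k)+\sb(k+1)\equiv 1+1\equiv 0$; if $k\equiv 2$ then $n\equiv 2$ and $\sb(k)+\sb(k+1)\equiv 1+0=1$ (using $\sb(k+1)=\sb(3j+3)=\sb(3(j+1))\equiv 0$). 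In every case the outcome agrees with the claimed pattern.

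The argument is entirely routine once the invariant is chosen correctly; the only place that requires a moment's care — and what I would call the main obstacle, though a mild one — is bookkeeping the index shifts modulo $3$ in the odd case, since $\sb(2k+1)$ mixes $\sb(k)$ and $\sb(k+1)$, so one must be sure that the two consecutive residues $k\bmod 3$ and $(k+1)\bmod 3$ are being read off the block $(0,1,1)$ at the right offsets. Organizing the proof as a single table of the three residue classes (listing $n\bmod 3$, the relevant smaller arguments, their parities from the hypothesis, and the resulting parity of $\sb(n)$) makes this transparent and avoids any sign or offset slip. No deeper input is needed; in particular one does not need Reznick's hyperbinary interpretation, only the defining recurrence.
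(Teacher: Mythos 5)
Your proof is correct: the strong induction on the parity pattern $(0,1,1)$ over residues modulo $3$, split according to $n=2k$ versus $n=2k+1$ and using $\sb(2k)=\sb(k)$, $\sb(2k+1)=\sb(k+1)+\sb(k)$, checks out in every case. The paper deliberately omits the proof of this proposition, but your argument is exactly the routine recurrence-plus-induction verification it alludes to, and it mirrors the block-wise parity induction the paper does carry out for the analogous result on $\spa(n)$ modulo $7$.
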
 

\begin{proposition}
For $n\geq 0$,
$\sb(4n+1) \equiv \sb(n+1)$.
\end{proposition}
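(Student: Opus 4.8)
The plan is to prove $\sb(4n+1)\equiv\sb(n+1)\pmod 2$ by reducing $\sb(4n+1)$ to smaller arguments using only the Stern--Brocot recurrence, and then either invoke the modulo-$3$ parity classification of the preceding proposition or run a short induction. Concretely, I would first apply $\sb(2m+1)=\sb(m+1)+\sb(m)$ with $m=2n$ to get $\sb(4n+1)=\sb(2n+1)+\sb(2n)=\sb(2n+1)+\sb(n)$, where the last step uses $\sb(2n)=\sb(n)$. Applying the odd recurrence once more, $\sb(2n+1)=\sb(n+1)+\sb(n)$, so
\[
\sb(4n+1)=\sb(n+1)+2\sb(n)\equiv\sb(n+1)\pmod 2 .
\]
That is the whole argument; no induction is even needed, only the defining recurrences and $\sb(2n)=\sb(n)$.

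The one point requiring a word of care is the range of validity: the recurrence $\sb(2n+1)=\sb(n+1)+\sb(n)$ is stated for $n\ge 1$, and the step $\sb(4n+1)=\sb(2n+1)+\sb(2n)$ uses it with $2n$ in place of $n$, which is legitimate for $n\ge 1$; the step $\sb(2n)=\sb(n)$ is also valid for $n\ge1$. So the chain above is valid for all $n\ge 1$. The case $n=0$ must be checked separately: $\sb(1)=1=\sb(1)$, so the claimed congruence holds there too (indeed with equality). I would state the computation for $n\ge 1$ and dispose of $n=0$ by inspection in one line.

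There is essentially no obstacle here — the result is an immediate consequence of two applications of the recurrence together with the identity $\sb(2n)=\sb(n)$, and the only thing to watch is keeping the index ranges correct when substituting $2n$ into a recurrence stated for general positive integers. If one preferred, the congruence $\sb(4n+1)\equiv\sb(n+1)$ could instead be derived from the preceding proposition (both sides are even iff their argument is $\equiv0\pmod3$, and $4n+1\equiv n+1\pmod 3$), but the direct computation above is shorter and self-contained, so that is the route I would take.
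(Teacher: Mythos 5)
Your computation is correct: two applications of the odd-index recurrence together with $\sb(2n)=\sb(n)$ give $\sb(4n+1)=\sb(n+1)+2\sb(n)$, and the $n=0$ case is checked directly. The paper omits the proof of this proposition, indicating only that it follows from the recurrences; your argument is exactly the intended one.
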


\subsection{Total Number of Parts in $\SB(n)$ and $\HB(n)$}
 
Let $\psb(n)$ be the total number of parts in all of the partitions in $\SB(n)$.

From the initial conditions and recurrences for $\SB(n)$,
the sequence $\psb(n)$ can be defined directly by $\psb(1)=\psb(2) = 1$, $\psb(3)=3$, and for $n\geq 2$, \begin{align*}\psb(2n)& =\psb(n),\\ \psb(2n+1)&=\psb(n) + \sb(n) + \psb(n+1).\end{align*}

The following table shows the values of  $\psb(n)$ for $n = 1, \ldots, 12$.

$$\begin{array}
{c|cccccccccccc}
n  & 1 & 2 & 3 & 4 & 5 & 6 & 7 & 8 & 9 & 10 & 11 & 12 \\
\hline
\psb(n) & 1 &  1 &  3 &  1 &  5 &  3 &  6 &  1 &  7 &  5 &  11 &  3
\end{array} $$

\begin{proposition}
For $n \geq 0$, $\psb(4n+1) \equiv \psb(n+1).$
\end{proposition}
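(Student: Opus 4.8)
The plan is to unfold the recurrence for $\psb$ twice and observe that everything except a single copy of $\psb(n+1)$ occurs with an even coefficient, so no parity information about $\sb(n)$ or $\psb(n)$ beyond the recurrences themselves is needed.

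First I would dispose of the case $n=0$, where $\psb(4n+1)=\psb(1)=1=\psb(1)=\psb(n+1)$, so the congruence is trivial. For $n\geq 1$, I would write $4n+1=2(2n)+1$ with $2n\geq 2$ and apply the odd-index recurrence $\psb(2m+1)=\psb(m)+\sb(m)+\psb(m+1)$ at $m=2n$, obtaining
$$\psb(4n+1)=\psb(2n)+\sb(2n)+\psb(2n+1).$$
Next I would use $\psb(2n)=\psb(n)$ and $\sb(2n)=\sb(n)$ (both valid for $n\geq 1$: the former from the initial values when $n=1$ and from the recurrence when $n\geq 2$, the latter from the Stern--Brocot recurrence), and apply the odd-index recurrence once more, now at $m=n$, to rewrite $\psb(2n+1)=\psb(n)+\sb(n)+\psb(n+1)$. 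When $n=1$ this last identity is not literally an instance of the stated recurrence, so I would check it directly against the initial condition: $\psb(1)+\sb(1)+\psb(2)=1+1+1=3=\psb(3)$. Substituting everything gives
$$\psb(4n+1)=2\psb(n)+2\sb(n)+\psb(n+1)\equiv\psb(n+1)\pmod 2,$$
which is the claim.

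The hard part is essentially nonexistent here: the only thing requiring attention is that the recurrences be legitimately applicable at the small indices involved, which amounts to verifying the $n=1$ case by hand against the initial conditions $\psb(1)=\psb(2)=1$ and $\psb(3)=3$. Everything else is a two-line substitution.
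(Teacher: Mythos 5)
Your proof is correct, and since the paper omits the proof of this proposition (relying on its blanket remark that such propositions follow from the recurrences), your two-step unfolding is exactly the intended routine argument; your care with the small-index cases ($n=0$ and the $m=1$ instance of the odd-index recurrence, which the paper only states for $m\geq 2$) is appropriate. In fact your computation establishes the stronger exact identity $\psb(4n+1)=2\psb(n)+2\sb(n)+\psb(n+1)$, of which the stated congruence is an immediate consequence.
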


 The recursive definition of $\HB'(n)$ gives the formula for $\phb(n)$, the total number of parts in all the hyperbinary partitions of $n$:  $\phb(1) =1$,  $\phb(2)=3$  and for $n\geq 2$, 
 \begin{align} 
 \phb(2n)& =\phb(n)+ \phb(n-1) + 2\sb(n),\notag \\ \phb(2n+1)& = \phb(n) + \sb(n+1).
\end{align}

The following table shows the total number of parts in the hyperbinary partitions of $n$, $\phb(n)$, for $n = 1, 2, \ldots, 12$.

$$\begin{array}
{c|ccccccccccccc}
n & 1 & 2 & 3 & 4 & 5 & 6 & 7 & 8 & 9 & 10 & 11 & 12 \\
\hline
\phb(n) & 1 & 3 & 2 & 6 & 5 & 9 & 3 & 10 & 9 & 17 & 7 & 18
\end{array} $$

\begin{proposition}
For $n \geq 1$, $\phb(2n-1) + \phb(2n+1) + \phb(4n+1) \equiv 0$.
\end{proposition}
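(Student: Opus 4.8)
The plan is to reduce everything modulo $2$ and apply the defining recurrences directly; no induction is needed. Since $\equiv$ denotes congruence modulo $2$ and the terms $2\sb(m)$ vanish mod $2$, the recurrences for $\phb$ become $\phb(2m)\equiv \phb(m)+\phb(m-1)$ and $\phb(2m+1)\equiv \phb(m)+\sb(m+1)$, valid for $m\geq 2$. The first step is to dispose of the small cases $n=1$ and $n=2$ by direct computation from the table of values: $\phb(1)+\phb(3)+\phb(5)=1+2+5\equiv 0$ and $\phb(3)+\phb(5)+\phb(9)=2+5+9\equiv 0$.

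For $n\geq 3$ I would write $2n-1=2(n-1)+1$ and $4n+1=2(2n)+1$, noting that $n-1\geq 2$, $n\geq 2$, and $2n\geq 2$, so each recurrence below is within its stated range. This gives
\begin{align*}
\phb(2n-1)&\equiv \phb(n-1)+\sb(n),\\
\phb(2n+1)&\equiv \phb(n)+\sb(n+1),\\
\phb(4n+1)&\equiv \phb(2n)+\sb(2n+1)\equiv \phb(n)+\phb(n-1)+\sb(2n+1).
\end{align*}
Adding the three right-hand sides, the terms $2\phb(n-1)$ and $2\phb(n)$ drop out modulo $2$, leaving $\sb(n)+\sb(n+1)+\sb(2n+1)$. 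By the Stern-Brocot recurrence $\sb(2n+1)=\sb(n+1)+\sb(n)$ this equals $2\sb(n)+2\sb(n+1)\equiv 0$, which finishes the argument.

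The computation is entirely mechanical, so there is no genuine obstacle; the only point that requires any care is verifying that each invocation of a $\phb$-recurrence respects the restriction $m\geq 2$, which is precisely what forces the separate treatment of $n=1,2$. In writing this up I would state the mod-$2$ forms of the recurrences once at the outset so that the main display is clean.
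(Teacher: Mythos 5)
Your proof is correct: the paper omits the proof of this proposition (stating only that such proofs follow from the recurrences and/or induction), and your direct mod-$2$ manipulation of the $\phb$ and $\sb$ recurrences, with the cases $n=1,2$ checked by hand to respect the $m\geq 2$ restriction, is exactly the intended argument. The cancellation down to $\sb(n)+\sb(n+1)+\sb(2n+1)\equiv 0$ checks out against the tabulated values.
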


\section{Summary of notation}\label{sec_notation}

For reference, here are the definitions of six classical sequences.

$$\begin{array}
{c|c|c|c} & \text{Fibonacci} & \text{Lucas} & \text{Pell} \\ n& f(n) & l(n) & p(n) \\ \hline 0 & 1& 2& 0\\ 1& 1& 1& 1\\ n\geq 2 & f(n-1)+f(n-2)& l(n-1) + l(n-2)& 2 p(n-1) + p(n-2)
\end{array} $$

$$\begin{array}
{c|c|c|c} & \text{tribonacci} & \text{Padovan} & \text{Narayana's cows} \\ n& t(n) & \pa(n) & \nc(n) \\ \hline 0 & 0& 1& 1\\ 1& 0& 0& 1\\ 2& 1& 0& 1\\ n\geq 3 & t(n-1)+t(n-2)& \pa(n-2)+\pa(n-3)& \nc(n-1)+\nc(n-3)
\\ & + \, t(n-3) & &
\end{array} $$

The next table summarizes the notation used for various sets and sequences in the article.

$$\begin{array}
{l|llll} 
 & \text{set of} & \text{self-similar} & \text{total number} & \text{corresponding} \\
 & \text{partitions} & \text{sequence} & \text{of parts} & \text{bijective set} \\
 \hline
\text{semi-Fibonacci} & \i4 \SF & \h5 \sf  & \h5 \psf & \h5\OB  \\
\text{semi-tribonacci} &\i4  \ST & \h5  \st & \h5 \pst & \h5 \OBST \\
\text{semi-Padovan} &\i4  \SPa & \h5 \spa & \h5 \pspa & \h5\OB_{\mathrm{R}} \\
\text{modified semi-Padovan} & \i4 \SPa' & \h5 \spa' & \h5 \pspa' & \h5\OB_{\mathrm{R}'} \\
\text{semi-Pell} & \i4 \SP & \h5 \sp & \h5 \psp & \h5\overline\OB' \\
\text{semi-Narayana's cows} & \i4 \SNc & \h5 \snc & \h5 \psnc & \h5\mathcal{OB}_{1,3} \\
\text{delayed semi-Fibonacci} & \i4 \DSF & \h5 \dsf & \h5 \pdsf & \h5\OB_1 \\
\text{semi-Lucas} &\i4  \SL & \h5 \sl & \h5 \psl & \h5\overline\OB^* \\
\text{Stern-Brocot} & \i4 \SB & \h5 \sb & \h5 \psb & \h5\HB \\
\end{array} $$

We summarize the definitions of sets of partitions and overpartitions used in this paper. 
\medskip

$$\begin{array}
{l|l} 
 \text{set} & \text{description} \\
 \hline
 \B & \text{binary partitions}  \\
 \\
\OB & \text{odd binary partitions}  \\
\OB_1 &  \text{largest part must be repeated} \\ \OB_2 & \text{first two parts consecutive powers of } 2\\ 
\OB_{\mathrm{R}} & \text{each part is repeated} \\
\OB_{\mathrm{R}'} & \text{each part is repeated except possibly   the largest} \\
\mathcal{OB}_{1,3} & \text{each part has multiplicity $1$ or $3$}\\
\\
\overline\OB & \text{odd binary overpartitions} \\
\overline\OB' & \text{largest part must be overlined} \\
\overline\OB^* & \text{for $n\geq 1$, only the largest part may be overlined and only if repeated} \\
\OBST & \text{largest part cannot be overlined and} \\ & \text{\ \ \ \ \ other parts  may be overlined only if repeated} \\
\end{array} $$

\section{Concluding remarks} \label{sec_conclusions}

We defined recursively seven families of sets  of partitions enumerated by self-similar sequences. In each case, we found  a non-recursive description of the partitions, usually in terms of $2$-adic valuations of parts. Using generating functions, we also found 
 subsets of odd binary partitions or overpartitions enumerated by the sequences. We then found bijections between the corresponding sets of partitions enumerated by the same sequence. 

It might be interesting to study other recursively defined sets of partitions, whether enumerated by self-similar sequences or not. 

The sub-sequences at odd indices $\sf(2n+1)$, $\st(2n+1)$, $\sl(2n+1)$, $\sp(2n+1)$, and  $\spa(2n+1)$ are increasing, although very slowly. It would be interesting to compare their asymptotic growth rates.

A natural generalization of the recursion $a(2n) = a(n)$ is $a(m n ) = a(n)$, leading to recursively defined sets of partitions enumerated by $a(n)$ as before. Alanazi, Munagi, and Nyirenda did this for semi-$m$-Fibonacci
partitions \cite{AMN}; the same could be done for the $m$-ary analogs of $\st(n)$, $\sl(n)$, and the other sequences presented here.

\end{document}